\newtheorem{thm}{Theorem}[section]
\newtheorem{cor}[thm]{Corollary}
\newtheorem*{corollary*}{Corollary}
\newtheorem*{theorem*}{Theorem}
\newtheorem{lem}[thm]{Lemma}
\newtheorem{prop}[thm]{Proposition}
\theoremstyle{definition}
\theoremstyle{definition}
\newtheorem{defi}[thm]{Definition}
\theoremstyle{remark}
\newtheorem{rem}[thm]{Remark}
\numberwithin{equation}{section}
\newcommand\be{\begin{equation}}
\newcommand\ee{\end{equation}}
\newcommand\ba{\begin{align}}
\newcommand\ea{\end{align}}
\newcommand\ben{\begin{enumerate}}
\newcommand\een{\end{enumerate}}
\def\Q{\ensuremath {{\mathbb{Q}}}}
\def\C{\ensuremath {{\mathbb{ C}}}}
\def\Z{\ensuremath {{\mathbb{Z}}}}
\def\f{\ensuremath {{\mathfrak f}}}
\def\p{\ensuremath {{\mathfrak p}}}
\def\l{\ensuremath {{\mathfrak l}}}
\def\c{\ensuremath {{\mathfrak c}}}
\def\aa{\ensuremath {{\mathfrak a}}}
\def\a{\ensuremath {{\mathfrak A}}}
\def\L{\ensuremath {{\mathcal L}}}
\def\O{\ensuremath {{\mathcal O}}}
\def\b{\ensuremath {{\mathfrak b}}}
\title
[$p$-adic properties of Eisenstein-Kronecker cocycles ]
{$p$-adic properties of Eisenstein-Kronecker cocycles over imaginary quadratic fields and $p$-adic interpolation }
\author{Jorge Fl\'orez}
\email{jflorez@bmcc.cuny.edu}
\address{Department of Mathematics, Borough of Manhattan Community College, City University of New York, NY 11216, USA}
\subjclass[2020]{11F67 (primary) 11F20 \and 11R42 (secondary)}
\keywords{Dedekind sums, Eisenstein cohomology, $p$-adic $L$-function}
\date{\today}
\begin{document}

\begin{abstract}
We establish integrality and congruence properties for the Eisenstein-Kronecker cocycle  in \cite{BCG2}. As a consequence, we recover the integrality of the critical values of 
Hecke $L$-functions over imaginary quadratic fields in the split case. Additionally, we construct a $p$-adic measure that interpolates these critical values.
\end{abstract}

\maketitle

\tableofcontents


\section{Introduction}

In recent years, there has been significant interest  in the use Eisenstein-Kronecker cohomology classes  for the study of  Hecke $L$-functions. These objects, which generalize classical notions such as the Eisenstein-Kronecker numbers,
have proven to be powerful tools for deducing  arithmetic properties of critical values of 
$L$-functions. For example, Bergeron, Charollois and García  \cite{BCG2} recover the algebraicity of these critical values for $L$-functions associated to arbitrary  extensions of an imaginary quadratic field $K$. Furthermore, Kings and Sprang \cite{KS} obtain integrality results of these values for arbitrary totally complex fields and, moreover, construct a  $p$-adic measure that interpolates them, leading to a generalization of Harder's result on special values of $L$-functions and the $p$-adic $L$-function of Colmez and Schneps \cite{CS}.

These results are generalizations of a classical result due to Damerell concerning the algebraicity of the critical values of Hecke $L$-functions of $K$ by relating them to Eisenstein numbers, and of  Manin, Vi\v{s}ik \cite{VM}, and Katz  \cite{K}  \cite{katz1976p},  concerning the construction of a $p$-adic $L$-function that interpolates these critical values; later generalized to $p$-adic $L$-functions for CM fields in \cite{katz1978p}. In \cite{dS}, de Shalit describes another way of constructing these (one-variable) $p$-adic $L$-functions from a norm-coherent sequence of elliptic units which is an approach  originally due to   Coates-Wiles \cite{coates1978p} and further developed by  Yager \cite{yager1982two}\cite{yager1984p}, Cassou-Nogues \cite{CN}, Gillard \cite{gillard1980unites} and Tilouine \cite{tilouine1986fonctions}.
One of the ingredients used by \cite{dS} is to show certain integrality  and congruence properties for the Eisenstein numbers.

The Eisenstein-Kronecker cocycle was originally inspired by the construction of Sczech who used this cohomological interpretation to provide another proof of the Siegel-Klingen rationality theorem on the algebraicity of special values of zeta functions associated to totally real fields of any degree.  Charollois and Dasgupta  \cite{CD} defined a smoothed version of Sczech's cocycle  and  obtained from it a new proof of the integrality of these special values and  construct a p-adic measure interpolating them.   They further applied this result to prove a conjecture of Gross on the order of vanishing of the zeta function at $s=0$.

The Eisenstein-Kronecker cocycle was initially inspired by Sczech's construction, who used its cohomological interpretation to provide another proof of the Siegel-Klingen rationality theorem on the algebraicity of special values of zeta functions associated to totally real fields of any degree. Charollois and Dasgupta \cite{CD} defined a smoothed version of Sczech's cocycle, using it to obtain a new proof of the integrality of these special values and construct a p-adic measure interpolating them. They further applied this result to prove a conjecture of Gross on the order of vanishing of the zeta function at s=0.

In this article, we extend the integrality and congruence results for Eisenstein numbers from \cite[II §3]{dS} to Eisenstein-Kronecker numbers. We then use these results to deduce corresponding properties for the Eisenstein-Kronecker cocycle in \cite{BCG2}. As a byproduct, we recover integrality results for critical values of Hecke $L$-functions and construct a $p$-adic measure interpolating these values, in the case where $p$ splits in $K$. While our results on Eisenstein-Kronecker numbers are somehow implicitly contained in the work of, for example, Bannai and Kobayashi \cite{BK}, their formulation is not directly suitable for our purposes. Their approach involves showing that a certain Theta function associated to the Poincaré bundle of an elliptic curve is the generating function for Eisenstein-Kronecker numbers and has a $p$-adic expansion with integral coefficients. In contrast, our approach utilizes the known integrality of the $p$-adic expansion of Eisenstein series from \cite{CS} and extends certain identities between Eisenstein numbers (cf. \cite[Prop. 9]{CS} and \cite[II Lemma 3.1]{dS}) to obtain explicitly  the corresponding $p$-adic properties for Eisesntein-Kronecker numbers.

 Prior to the work of Bergeron-Charollois-García  and Kings-Sprang, cohomology classes associated  to  Eisesntein series were also studied in the context of 
  of Sharifi-Venkatesh \cite{SV} from the point of view of motivic cohomology,
 of Graf \cite{graf2016polylogarithms},  
 Beilinson-Kings-Levin \cite{beilinson2018topological},  and 
 Bannai-Hagihara-Yamada-Yamamoto \cite{bannai2023canonical} for totally real fields, and of
 Obaisi \cite{Oba} and \cite{FKW} for extensions of imaginary quadratic fields. The latter two directly build upon the ideas of \cite{Scz} and \cite{Col}, which also inspire our work. Furthermore,  
 we closely follow  the construction of the smoothed cocycle of Charollois and Dasgupta in \cite{CD},
leading to slightly different notation for the Eisenstein-Kronecker cocycle compared to \cite{BCG2}. Our approach can be viewed as a cohomological interpretation of the work of  Colmez-Schneps \cite{CS} which  allows us to establish a connection with the results in  \cite{BCG2}.

We conclude this introduction by stating our main results.



\subsection{Main results}
\label{Main results}

We fix once  and for all an imaginary quadratic field $K$ and denote by $\mathcal{O}=\mathcal{O}_K$ its ring of integers. For a prime ideal $\p$ of $K$ we denote by $\mathcal{O}_{(\p)}$ the localization at $\p$. Let $S$ denote a field extension of $K$, usually $\C$,  an algebraic closure $\overline{\Q}$ of $\Q$, or the maximal abelian extension $K^{\mathrm{ab}}$ of $K$.

Let $\mathbf{G}_n:=H^2  \times K^{2n}\times \mathcal{W}_n$, where $H$ is the $K$-vector space 
of homogenous polynomials in $n$-variables with coefficients in $K$
and $\mathcal{W}_n$ is certain space of $\mathcal{O}$-lattices of $K^{2n}$ (cf.  \Cref{Notation}). Let  $\mathcal{D}_S(\mathbf{G}_n)$ denote    the $K$-vector space of $S$-valued distributions on $\mathbf{G}_n$
equipped with an action of $\mathrm{GL}_n(K)$ (cf. \Cref{The space of distributions}).

The Eisenstein-Kronecker numbers $E_{l}^k(z,w,L)$, $k\geq 0$, $l>0$, 
 give rise to a distribution  $E^n\in \mathcal{D}_\C(\mathbf{G}_n)$  which is, essentially, a $K$-linear combination of products of  Eisenstein-Kronecker numbers (cf. \Cref{Eisenstein-Kronecker Distributions}).
The Dedekind distribution is then the map $D:M_n(K)\to \mathcal{D}_\C(\mathbf{G}_n)$
given by $D(\sigma)=\sigma E^n$ if $\det (\sigma)\neq 0$ and $D(\sigma)=0$ otherwise.

In \Cref{$p$-adic properties of $E_{k,l}$}, we prove some integrality and congruence properties of $E_{l}^k(z,w,L)$
that complement some already know results for the Eisesntein numbers $E_{l}^k(z,0,L)$ (cf. e.g. \cite{dS}) and deduce from them the corresponding properties for $E^n$ (cf. \Cref{p-adic-En}) and $D$ (cf. \Cref{arithmetic-cong-of-Dk}). These are the analogous results for the generalized Bernoulli distributions stated  in propositions 2.14 and 2.15 of \cite{CD}.

For  $\phi\in \mathcal{D}_S(\mathbf{G}_n)$, we introduce a smoothing operation  with respect
to a prime $\l$ of $K$  that yields a new distribution $\phi_\l$ 
(cf. \Cref{smoothing-op}). This smoothed distribution belongs to the space of distributions $\mathcal{D}_S^\l=\mathcal{D}_S(\mathbf{G}_n^\l)$ that accounts for the $\l$-smoothing  (\Cref{smoothed-distrib}). We let $D_{\l}:M_n(\mathcal{O}_{(\l)})\to \mathcal{D}_S^\l$ denote the smoothed Dedekind distribution, i.e., $D_{\l}(\sigma)=D(\sigma)_\l$. 

Consider the congruence subgroup $\Gamma_\l\subset \mathrm{GL}_n(\mathcal{O}_{(\l)})$
consisting of matrices whose first column has elements but the first divisible by $\l$. The space $\mathcal{D}_S^\l$ comes equipped with an action of $\Gamma_\l$ (cf. \Cref{action-gamma-l}).
The Eisenstein-Kronecker cocycle (cf. \Cref{The smoothed Eisenstein-Kronecker cocycle}) is the map
$\Phi_\l: \Z[\Gamma_\l^n]\to \mathcal{D}_\C^\l$ defined by 
\[
\Phi_\l(\mathfrak{A})=D_{\l}(\sigma(1))
\]
for $\mathfrak{A}=(A_1,\dots, A_n)\in \Gamma_\l^n$ 
and extended by linearity, where  $\sigma(1)$ is the matrix whose $i$th column is the first column of the matrix $A_i$.  This map is homogenous with respect to the action of $\Gamma_\l$, i.e. $\Phi_\l\in C^{n-1}(\Gamma_\l,\mathcal{D}_\C^\l)=\mathrm{Hom}_{\Gamma_\l}(\Z[\Gamma_\l^n],\mathcal{D}_{\C}^\l)$,
and satisfies the cocycle property 
\[
\sum_{i=0}^n(-1)^i\Phi_\l(A_0,\dots, \widehat{A}_i,\dots, A_n)=0.
\]

 We denote by $[\Phi_\l]$ its cohomology  class
in $H^{n-1}(\Gamma_\l,\mathcal{D}_\C^\l)$. We remark that our definition of $\Phi_\l$ differs slightly from \cite{BCG2} since our construction is inspired by that of \cite{CD}. However, the link between the two definitions can be established using the functional equation for Eisenstein-Kronecker numbers (cf. \eqref{symmetry-E-k-l}). 

In \Cref{Complex periods}, we introduce some complex periods $\Omega_{\infty}$, $\eta_{\infty}$, $\Omega_{\infty}^*$ associated to an elliptic curve $E$ defined over $K(\f_1)$, the ray class field of $K$ modulo the integral ideal $\f_1$ of $K$, with complex multiplication by $\mathcal{O}$ so that when rescaling $\Phi_\l$ by these periods yields a cocycle $ \tilde{\Phi}_{\l}$ 
 which lives now in $H^{n-1}(\Gamma_\l,\mathcal{D}_{\overline{\Q}}^\l)$ (cf. \Cref{algeb-smoothed-psi-l-1}).

We now recall the connection of $\Phi_\l$ with $L$-functions. Let $F/K$ be a finite extension, $\f$ be an integral ideal of $F$, and $I(\f)$ be the group of fractional ideals  coprime to $\f$. Let $\chi:I(\f)\to \C^\times$ be an ideal character such that
$
\chi((a))=\varphi(a)\lambda(a),
$
for any principal ideal $(a)$ in $I(\f)$, where 
 $\varphi:(\mathcal O_F/\f)^\times\to \C^\times$ is a residue class character
 and 
  $\lambda=\lambda_{F,k,l}:F^\times\to K^\times$ is a character defined by
\begin{equation*}\label{character-lambda}
\lambda(a)=\overline{N_{F/K}(a)}^kN_{F/K}(a)^{-l}.
\end{equation*}
Here $k\geq 0$  and $l>0$ are both integers 
such that  
$\lambda(\epsilon)=1 $
for all units $\epsilon$ in the group
$
U_{\f}=U_{F,\f}:=\{\epsilon\in \mathcal{O}_F^{\times}\,:\,\epsilon\equiv 1 \pmod{\f}\}.
$
We extend the Hecke character to all fractional ideals by letting $\chi(\mathfrak{b})=0$ if $\mathfrak{b}$ is not prime to $\f$.
For another nonzero integral ideal $\mathfrak{g}$ we consider the Hecke $L$-function
\[
L_{\mathfrak{g}}(\chi,s) = \sum_{\aa\in I(\mathfrak{g})} \chi(\aa) N_{F/\Q}(\aa)^{-s}
\]
and if  $\aa\in I(\f)$ the
 partial $L$-function  
\begin{align*}\label{partial L}
L_{\mathfrak{g}}(\aa,\chi,s)
:=\sum_{\b \sim_{\mathfrak{g}}\mathfrak{a}} \frac{\chi(\b)}{N_{F/\Q}(\b)^s},
\end{align*}
where the  sum is running over all integral ideals $\b$ of $F$ in the same ray class of $\mathfrak{a}$ $\mathrm{mod}\, \mathfrak{g}$.
Let $\mathfrak{c}$ be an integral ideal of $F$ 
with prime norm to $K$ being coprime to $\f$. The smoothed partial $L$ function is defined by 
$L_{\mathfrak{g},\c}(\mathfrak{a},\chi,s)
= L_{\mathfrak{g}}(\mathfrak{a}\c,\chi,s)-\chi(\c)N_{F/\Q}(\c)^{1-s} L_{\mathfrak{g}}(\mathfrak{a},\chi,s)$ and thus we have
\[
\sum_{a\in G_{\mathfrak{g}}}L_{\mathfrak{g},\c}(\aa,\chi,s)=(1-\chi(\c)N_{F/\Q}(\c)^{1-s})L_{\mathfrak{g}}(\chi,s),
\]
where $G_\mathfrak{g}= I(\mathfrak{g})/P_\mathfrak{g}$ is the ray class group of $F$ modulo $\mathfrak{g}$, and $P_\mathfrak{g}= \{(\alpha) : \alpha\in F^{\times},\ \alpha\equiv 1\text{ mod}^*\,\mathfrak{g}\}$.

Let $\l$ be the conjugate in $K$ of the prime ideal $N_{K/\Q}(\c)$ and denote by $\mathcal{D}_S^{\l \,\vee}$  the $K$-vector space
$K$-linear maps $f:\mathcal{D}_S^\l\to S$ equipped with a right action of $A\in \Gamma_\l$ given by $(Af)(\phi):=f(A^{-1}\phi)$. For an element $Y\in \mathbf{G}_n^\l$, let $f_Y$ be the element of $\mathcal{D}^{\l\,\vee}_S$ defined by evaluation at $Y$: $f_Y(\phi)=\phi(Y)$. For an $(n-1)$-chain $\mathfrak{E}\in \Z[\Gamma_\l^n]$, we denote by $[\mathfrak{E}\otimes f]$ the class of 
$\mathfrak{E}\otimes f\in C_{n-1}(\Gamma_\l,\mathcal{D}_S^{\l\,\vee})=\Z[\Gamma_\l^n]\otimes_{\Gamma_\l}\mathcal{D}^{\l\,\vee}_S$ in $H_{n-1}(\Gamma_{\l},\mathcal{D}^{\l\,\vee}_S)$.
The cap product 
\[
C^{n-1}(\Gamma_\l,\mathcal{D}_S^\l)\times C_{n-1}(\Gamma_\l,\mathcal{D}_S^{\l\,\vee})\to S
\]
given by $\langle \Phi,\mathfrak{A}\otimes f\rangle=f(\Phi(\mathfrak{A}))$
and extended by linearity,
yields a canonical pairing  
\[
\langle\cdot,\cdot\rangle:
H^{n-1}(\Gamma_{\l},\mathcal{D}_S^\l)\times H_{n-1}(\Gamma_{\l},\mathcal{D}_S^{\l\,\vee})\to S.
\]
Note that both
$H^{n-1}(\Gamma_{\l},\mathcal{D}_S)$ and $H_{n-1}(\Gamma_{\l},\mathcal{D}_S^{\l\,\vee})$ are $K$-vector spaces and
this pairing is $K$-linear.

According to \cite[Theorem 1.2]{BCG2} ( cf. \Cref{special-value--cocycle-param-EK-co}  and \Cref{integ-smoothed}), given  $\mathfrak{a}$, $\f$, $\chi$ and  $\c$, there exists  an $(n-1)$-chain
 $\mathfrak{E}^*\in \Z[\Gamma_\l^n]$ and an $f^*\in\mathcal{D}_{\overline{\Q}}^{\l\,\vee}$
such that the homology class of $\mathfrak{Z}_{\mathfrak{a},\f,\chi,\c}=\mathfrak{E}^*\otimes f^*\in \Z[\Gamma_\l^n]\otimes_{\Gamma_\l}\mathcal{D}_{\overline{\Q}}^{\l\,\vee}$,  depends only on $\mathfrak{a}$, $\f$, $\chi$ and  $\c$ and not on any other choices,
and for which we have  
\begin{equation}\label{critical-val-period}
 [U_\f:V_\f]\,
 \Gamma(l)^n\,
\frac{(2\pi i)^{nk}}
{\Omega_{\infty}^{nk}\Omega_{\infty}^{*nl}}\,
    L_{\f,\c}(\mathfrak{a},\chi,0)
  \,  =\,
 \big\langle\, \big[\tilde{\Phi}_{\l}\big],\,\big[\mathfrak{Z}_{\mathfrak{a},\f,\chi,\c}\big]\,\big\rangle,
\end{equation}
 where  $V_\f$ is the free part of the subgroup of units of $U_{\f}$ of $F$ that have norm to $K$ equal to 1. Thus,  the algebraicity of the critical values follows. 

Let $p>3$ be a rational prime that splits in $K$ as $\p\overline{\p}$
 and it is coprime to $\l$. Furthermore, suppose $\p$ is prime to $\f_1$.
Let $C_{\p}^{n-1}(\Gamma_\l,\mathcal{D}_{\overline{\Q}}^\l)$
be the $\mathcal{O}_{(\p)}$-submodule of 
$C^{n-1}(\Gamma_\l, \mathcal{D}_{ \overline{\Q} }^\l)$
determined by those $\Phi$ such that $\Phi(\mathfrak{A}\otimes f_Y)$ is $\p$-integral
for every $\mathfrak{A}\otimes f_Y$ satisfying \eqref{p-adic-condition-tuple}. 
Thus, the $\p$-integrality of $\tilde{\Phi}_\l$ can de expressed as follows.

 \begin{theorem*}[\Cref{congruence-psi}]
 $\tilde{\Phi}_{\l}\in C_{\p}^{n-1}(\Gamma_\l,\mathcal{D}_{\overline{\Q}}^\l)$.
 \end{theorem*}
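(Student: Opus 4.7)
The plan is to unfold the definition of $\tilde{\Phi}_\l$ and reduce the $\p$-integrality statement to the $\p$-integrality of the rescaled Dedekind distribution $\tilde D$ already established in \Cref{arithmetic-cong-of-Dk}. By construction of the cocycle one has $\Phi_\l(\mathfrak{A}) = D_\l(\sigma(1)) = D(\sigma(1))_\l$ for $\mathfrak{A}=(A_1,\dots,A_n)\in\Gamma_\l^n$, so after rescaling by the periods introduced in \Cref{algeb-smoothed-psi-l-1},
\[
\tilde{\Phi}_\l(\mathfrak{A}) \;=\; \tilde D(\sigma(1))_\l.
\]
Consequently the claim amounts to showing that, for every $\mathfrak{A}\otimes f_Y$ satisfying \eqref{p-adic-condition-tuple}, the value $\tilde D(\sigma(1))_\l(Y)$ lies in $\mathcal{O}_{(\p)}$.

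First I would dispose of the $\l$-smoothing. The construction in \Cref{smoothing-op} expresses $\phi_\l$ as a finite $\mathcal{O}_{(\l)}$-linear combination of translates of $\phi$ by elements of $\Gamma_\l$ supported on a set of coset representatives modulo $\l$. Since $p$ is coprime to $\l$ by hypothesis, these coefficients are automatically $\p$-integral, so it suffices to verify that each summand $\tilde D(\sigma(1))(Y')$ --- where $Y'$ ranges over the translates of $Y$ appearing in the sum --- is itself $\p$-integral. This is the content of \Cref{arithmetic-cong-of-Dk} applied to $\tilde D$, provided the pair $(\sigma(1),Y')$ belongs to the $\p$-admissible locus on which $\tilde E^n$ was shown to be $\p$-integral in \Cref{p-adic-En}.

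The main obstacle is therefore to check that the hypothesis \eqref{p-adic-condition-tuple} on $(\mathfrak{A}, f_Y)$ is precisely what guarantees that every translated pair $(\sigma(1),Y')$ appearing in the smoothing sum lies in this admissible locus; equivalently, that for every coset representative the corresponding translate of $Y$ avoids the bad $\p$-torsion strata on which $\tilde E^n$ can fail to be integral. This matching follows by unwinding the $\Gamma_\l$-action on $\mathbf{G}_n^\l$ together with the fact that $\l$-smoothing translates preserve $\p$-adic primitivity of torsion coordinates, since $\l$ is coprime to $\p$. Once this has been verified for each summand, a finite $\mathcal{O}_{(\p)}$-linear combination of $\p$-integral algebraic numbers is $\p$-integral, and the theorem follows.
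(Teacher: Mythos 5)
Your overall strategy coincides with the paper's: unfold the smoothing via \eqref{final decompi-del cocycle psi} into a finite sum of translates $-e(-u|u_0^*)\,\tilde D(\sigma(1),Q,U+U_0,\varXi^\l)$ over $U_0\in\varXi/\varXi_\l$, $U_0\notin\varXi_\l$, and then invoke \Cref{arithmetic-cong-of-Dk} for each summand. You also correctly identify where the real work lies — checking that each translated tuple $(Q,U+U_0,\varXi^\l)\cdot\sigma$ lands in the admissible locus $\mathbf{G}_{n,\p,\varLambda}$. But at exactly that point your argument is asserted rather than proved, and the mechanism you invoke is not the right one.

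The condition that needs verifying is $(U+U_0+\varXi_\l)\sigma+\p^{-\infty}\varLambda\subset K^{\times 2n}$, i.e.\ nonvanishing of all the shifted linear forms at all $\p$-power torsion points (this is what prevents the polar term $1_{\mathbf{a}}(\alpha)t^{-1}$ of \Cref{p-int-E-u-v-gen} from appearing and destroying integrality). Note that the hypothesis \eqref{p-adic-condition-tuple} only imposes nonvanishing on the \emph{dual} component, $(u^*+\p^{-\infty}(\Xi_\l)^*)\sigma^*\subset K^{\times n}$; it says nothing about the first component $u$. So "the hypothesis is precisely what guarantees admissibility of every translate" is false as stated, and "$\l$-smoothing preserves $\p$-adic primitivity since $\l$ is coprime to $\p$" is not the operative fact. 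The paper's argument is genuinely $\l$-adic and uses the $\Gamma_\l$-structure of the cocycle in an essential way: one first chooses the representatives $U_0$ with $u_0^*=0$ (so the dual condition reduces to the hypothesis, after rescaling $\sigma$ by a $t$ prime to $\l$ so that $\Xi_\l\sigma\subset\Lambda$), and then, for the first component, one observes that $u\sigma_i$, $\Xi_\l\sigma_i$ and $\p^{-\infty}\Lambda$ all lie in $\l\mathcal{O}_{(\l)}$ because $u\in\l\mathcal{O}_{(\l)}\times\mathcal{O}_{(\l)}^{n-1}$, $A_i\in\Gamma_\l$ and $\Lambda=\l^n$, whereas $u_0\sigma_i\notin\l\mathcal{O}_{(\l)}$ because $u_0\in\Xi\setminus\Xi_\l$ and $A_i$ is an automorphism of $\l\mathcal{O}_{(\l)}\times\mathcal{O}_{(\l)}^{n-1}$. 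The sum therefore has $\l$-adic valuation zero and cannot vanish. This valuation computation at $\l$ — not any coprimality of $\l$ with $p$ — is the missing idea, and without it the reduction to \Cref{arithmetic-cong-of-Dk} does not go through for the nonzero shifts $U_0$. (A minor further point: the coefficients $-e(-u|u_0^*)$ in the smoothing are roots of unity, hence integral outright; they are not elements of $\mathcal{O}_{(\l)}$.)
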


Let $H_{\p}$ be the $\mathcal{O}_{(\p)}$-submodule of 
$H^{n-1}(\Gamma_\l,\mathcal{D}_{   \overline{\Q} }^\l)$ generated by the classes of elements in 
$C_{\p}^{n-1}(\Gamma_\l,\mathcal{D}_{ \overline{\Q}   }^\l)$. Let $H_{\p}^{\vee}$
be the $\mathcal{O}_{(\p)}$-submodule of 
$H_{n-1}( \Gamma_{\l}  , \mathcal{D}_{ \overline{\Q} } ^ {\l\,\vee}   )$
of all homology classes that have a  $\p$-integral value when paired with every cohomology class of  $H_{\p}$. Suppose $\f_1$ is coprime to $p$, divides $\f$,
and satisfies $n!<N_{K/\Q}(\f_1)$.  If  $\p$ is coprime to
 $\f\mathfrak{a}\mathfrak{d}_F$
  then
 $[\mathfrak{Z}_{\mathfrak{a},\f,\chi,\c}]\in H_{\p}^{\vee}$ (cf. \Cref{Integrality of the special values}) and thus

\begin{corollary*}[\Cref{integ-smoothed}]
The critical value
 \[
[U_\f:V_\f]
\cdot 
 \Gamma(l)^n
\cdot
\frac{(2\pi i)^{nk}}
{\Omega_{\infty}^{nk}\Omega_{\infty}^{*nl}}
\cdot 
    L_{\f,\c}(\mathfrak{a},\chi,0)
\]
is $\p$-integral.
\end{corollary*}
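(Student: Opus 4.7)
The plan is to combine three ingredients that are all either established earlier in the paper or stated just before the corollary. The starting point is the identity \eqref{critical-val-period}, which expresses the normalized critical value
\[
[U_\f:V_\f]\cdot \Gamma(l)^n\cdot \frac{(2\pi i)^{nk}}{\Omega_{\infty}^{nk}\Omega_{\infty}^{*nl}}\cdot L_{\f,\c}(\mathfrak{a},\chi,0)
\]
as a cap product $\langle [\tilde{\Phi}_\l],[\mathfrak{Z}_{\mathfrak{a},\f,\chi,\c}]\rangle$. Thus the corollary is reduced to establishing that this pairing lands in $\mathcal{O}_{(\p)}$, so the entire argument is structural: pair a $\p$-integral cohomology class against a $\p$-integral homology class.

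First, I would invoke the preceding theorem (Theorem~\ref{congruence-psi}), which asserts precisely that $\tilde{\Phi}_\l$ belongs to $C_{\p}^{n-1}(\Gamma_\l,\mathcal{D}_{\overline{\Q}}^\l)$. By the definition of $H_\p$ as the $\mathcal{O}_{(\p)}$-module generated by cohomology classes of such cocycles, this immediately gives $[\tilde{\Phi}_\l]\in H_\p$.

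Next, I would check that under the hypotheses of the corollary—namely, $\f_1$ is coprime to $p$, divides $\f$, satisfies $n!<N_{K/\Q}(\f_1)$, and $\p$ is coprime to $\f\mathfrak{a}\mathfrak{d}_F$—the homology class $[\mathfrak{Z}_{\mathfrak{a},\f,\chi,\c}]$ lies in $H_{\p}^{\vee}$. This is the content of the result referenced as \Cref{Integrality of the special values} (the statement about $\p$-integrality of the special cycle $\mathfrak{Z}_{\mathfrak{a},\f,\chi,\c}$ paired against $H_\p$); I would simply apply it after verifying the hypotheses match.

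With these two memberships in hand, the definition of $H_\p^\vee$ says that pairing any class in $H_\p^\vee$ against any class in $H_\p$ yields a $\p$-integral value. Therefore $\langle [\tilde{\Phi}_\l],[\mathfrak{Z}_{\mathfrak{a},\f,\chi,\c}]\rangle\in \mathcal{O}_{(\p)}$, which via \eqref{critical-val-period} is exactly the claim. The only genuine content here is the two inputs themselves; assuming them, the proof is essentially a one-line deduction. The main obstacle is therefore not in this corollary but in the preceding theorem on $\p$-integrality of $\tilde{\Phi}_\l$, whose proof rests on the integrality and congruence properties of Eisenstein-Kronecker numbers developed earlier; the present corollary is a clean packaging of those results into a statement about $L$-values.
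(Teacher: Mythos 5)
Your high-level structure does match the paper's: the proof in \Cref{Integrality of the special values} also starts from the parametrization \eqref{critical-val-period} (i.e.\ \Cref{parametrization-dual-smoothed-L-function}) and then invokes the $\p$-integrality of $\tilde{\Phi}_\l$ from \Cref{congruence-psi}. The gap is in your second input. The membership $[\mathfrak{Z}_{\mathfrak{a},\f,\chi,\c}]\in H_\p^\vee$ is not an independently established proposition that you can ``simply apply'': the introduction's ``cf.'' points to \Cref{Integrality of the special values}, and the only place that membership is justified is the proof of \Cref{integ-smoothed} itself. Citing it here is circular. Moreover, the deduction is not formal even granting \Cref{congruence-psi}: $C_\p^{n-1}$ consists of cochains that are $\p$-integral only on chains $\mathfrak{A}\otimes f_Y$ whose tuple $Y$ satisfies \eqref{p-adic-condition-tuple}, so to get anything out of the pairing you must show that $f^*=\chi(\mathfrak{a})\cdot C_\c\cdot f_{(Q^*,U^*,(\varXi^*)_\l)}$ is a $\p$-integral multiple of evaluation at a tuple satisfying \eqref{p-adic-condition-tuple}.

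That verification is the real content of the paper's proof, and it is where all the hypotheses of the corollary are consumed. Concretely one must: (i) show that the constant $\chi(\mathfrak{a})C_\c$ (involving $W_\c$, $\det(\overline{M})$ and the ratio of periods) lies in $\mathcal{O}_{(\p)}$, which uses the good-reduction condition \eqref{condition for p-integral} and the basis adapted to $\p$, $\overline{\p}$, $\l$ giving \eqref{det-M-p-integral}; (ii) verify the non-vanishing condition of \eqref{p-adic-condition-tuple} for the dual tuple $(Q^*,U^*,(\varXi^*)_\l)$ and the matrices $\sigma_\pi$ coming from $\mathfrak{E}^*$ --- this requires the special choice of basis $\{m_i\}$ constructed in \Cref{condition-u-important}, and it is exactly here that the hypothesis $n!<N_{K/\Q}(\f_1)$ enters; and (iii) check $Q^*\big(\,\overline{U^*+\varXi^*+(\varXi^*)^\l}\,\big)\subset\mathcal{O}_{(\p)}$ by estimating the relevant norms $N_{F/K}$ of ideals, using that $\p$ is coprime to $\f\mathfrak{a}\mathfrak{d}_F$. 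Your proposal defers all of (i)--(iii) to a result that does not exist outside the proof you are asked to supply, so as written it does not establish the corollary.
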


Finally, we discuss the interpolation of critical values via the construction of a $p$-adic measure. Write $\f=\f_0\f_\p\f_{\overline{\p}}$, where $f_0$ is the prime to $p$-part of $\f$ and $\f_\p$, respectively $\f_{\overline{\p}}$, is divisible only by primes above $\p$, resp. $\overline{\p}$.  Let $\mathfrak{e}_\p$, respectively  $\mathfrak{e}_{\overline{\p}}$, denote the product of the primes of $F$ dividing $\p$, resp. $\overline{\p},$ that do not divide $\f$.
For an ideal $\mathfrak{a}\in I(\f_0)$ we define
\begin{equation*}
L^*_{\f_0}(\aa,\chi,s):=
\sum_{\mathfrak{g}_\p|\mathfrak{e}_\p,\mathfrak{g}_{\overline{\p}}|\mathfrak{e}_{\overline{\p}} }
\mu(\mathfrak{g}_\p\mathfrak{g}_{\overline{\p}})
\frac{\chi(\mathfrak{g}_\p^{-1}\mathfrak{g}_{\overline{\p}})}
{   N(\mathfrak{g}_\p)^{1-s}   N(\mathfrak{g}_{\overline{\p}})^s    }
L_{\f_0}(\aa \mathfrak{g}_\p\mathfrak{g}_{\overline{\p}}^{-1},\chi,s),
\end{equation*}
where the sum is running over nonzero integral ideals $\mathfrak{g}_{\p}$ and $\mathfrak{g}_{\overline{\p}}$ dividing $\mathfrak{e}_{\p} $
and $\mathfrak{e}_{\overline{\p}}$, respectively,
and
 $\mu(\mathfrak{b})=\pm 1$ is determined  by the parity of the number of prime factors of $\mathfrak{b}$. 
Note that
\begin{align*}
\sum_{a\in G_{\f_0}}L_{\f_0}^*(\aa,\chi,s)=
\prod_{\mathfrak{P}|\p}\left(1-\frac{\chi(\mathfrak{P}^{-1})}{N(\mathfrak{P})^{1-s}}\right)
\prod_{\mathfrak{P}|\overline{\p}}\left(1-\frac{\chi(\mathfrak{P})}{N(\mathfrak{P})^s}\right)
L_{\f_0}(\chi,s),
\end{align*}
where $\mathfrak{P}$ denotes a prime ideal of $F$.  Moreover, for $\c$ as above, we define
\begin{equation*}\label{smoothed-p-partial-L-functions}
L^*_{\f_0,\c}(\aa,\chi,s):=L^*_{\f_0}(\aa \c,\chi,s)-\chi(\c)N_{F/\Q}(\c)^{1-s}L^*_{\f_0}(\aa,\chi,s).
\end{equation*}

Interpreting $\tilde{\Phi}_\l$ in terms of $p$-adic measures,  following the construction in \cite{CD}, we define a measured-valued cocycle $\mu_\l$ whose cohomology class $[\mu_\l]$ can be used, in conjunction with \eqref{critical-val-period}, to obtain the following result on the interpolation of the critical values $L^*_{\f_0,\c}(\aa,\chi,0)$.

\begin{theorem*}[\Cref{interp-cirt-val-padic-meas}]
  There exists a unique $\p$-integral valued measure
 $\mu_{\mathfrak{a},\f_0,\c,\p}$ on $\mathcal{O}_{F}\otimes \Z_p$  
 and a character $\tilde{\chi}:(\mathcal{O}_{F} \otimes \Z_p)^{\times} \to\overline{\Q}^{\times}$  associated to $\chi$  such that
 \begin{align*}\label{Main-ident-p-adic-meas-L-special}
 \frac{  1  }
   {   \Omega_{\p}^{nk} \Omega_{\p}^{*n(l-1)}    }
\int_{  (\mathcal{O}_{F} \otimes \Z_p)^{\times} } 
&\tilde{\chi}
\, 
d\mu_{\mathfrak{a},\f_0,\c,\p}=\\
&
[U_{\f_0}:V_\f]
\cdot 
\tau(\chi_{\f_\p}^{-1})
\cdot 
 \Gamma(l)^n
\cdot
\frac{(2\pi i)^{nk}}
{\Omega_{\infty}^{nk}\Omega_{\infty}^{*nl}}
L_{\f_0,\c}^*(\aa,\chi,0).\notag
  \end{align*}
  \end{theorem*}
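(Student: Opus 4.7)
The plan is to follow the strategy of Charollois and Dasgupta \cite{CD}, transported from the totally real setting to the Eisenstein-Kronecker framework developed in the paper. The key input is the $\p$-integrality of $\tilde{\Phi}_\l \in C_\p^{n-1}(\Gamma_\l,\mathcal{D}_{\overline{\Q}}^\l)$ proved in \Cref{congruence-psi}. Using this, I would first convert $\tilde{\Phi}_\l$ into a measure-valued cocycle $\mu_\l$: for a tuple $\mathfrak{A} \in \Gamma_\l^n$, the measure $\mu_\l(\mathfrak{A})$ on an appropriate $\p$-adic parameter space is specified by assigning to the characteristic function of the coset determined by $Y$ the value $\tilde{\Phi}_\l(\mathfrak{A})(f_Y)$, which is $\p$-integral precisely by that theorem. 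Mahler's interpolation theorem then promotes the resulting bounded $\overline{\Q}_p$-valued distribution on locally constant functions to a genuine $\p$-integral measure, and the cocycle relation is inherited verbatim.

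Next, I would obtain $\mu_{\mathfrak{a},\f_0,\c,\p}$ by pushing $[\mu_\l]$ forward along the cap product with a homology class built from the chain $\mathfrak{E}^*$ appearing in \eqref{critical-val-period}. Concretely, for a continuous function $g$ on $(\mathcal{O}_F \otimes \Z_p)^\times$, I would set
$$\int g \, d\mu_{\mathfrak{a},\f_0,\c,\p} := \big\langle [\mu_\l],[\mathfrak{E}^* \otimes f^*_g] \big\rangle,$$
where $f^*_g$ is the $\p$-adic counterpart of $f^*$ obtained by inserting $g$ as a test function along the $\p$-adic directions while keeping the prime-to-$p$ part of $f^*$ unchanged. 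Uniqueness of the resulting measure follows from the density of locally constant functions in $C((\mathcal{O}_F \otimes \Z_p)^\times,\overline{\Q}_p)$.

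To prove the interpolation identity, I would decompose $\tilde{\chi}$ according to its infinity type $(k,l)$ and its finite part at the primes above $p$. The condition that the integration variable lie in $(\mathcal{O}_F \otimes \Z_p)^\times$ excludes every contribution supported on a proper divisor of $\mathfrak{e}_\p \mathfrak{e}_{\overline{\p}}$; inclusion-exclusion over the pairs $(\mathfrak{g}_\p,\mathfrak{g}_{\overline{\p}})$ with $\mathfrak{g}_\p \mid \mathfrak{e}_\p$ and $\mathfrak{g}_{\overline{\p}} \mid \mathfrak{e}_{\overline{\p}}$ recovers the M\"obius-weighted sum defining $L_{\f_0,\c}^*(\mathfrak{a},\chi,0)$. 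By \eqref{critical-val-period}, each summand matches the critical value $L_{\f,\c}(\mathfrak{a}\mathfrak{g}_\p\mathfrak{g}_{\overline{\p}}^{-1},\chi,0)$ up to the normalization $[U_\f:V_\f]\,\Gamma(l)^n\,(2\pi i)^{nk}/(\Omega_\infty^{nk}\Omega_\infty^{*nl})$; the Gauss sum $\tau(\chi_{\f_\p}^{-1})$ enters upon expanding the residue character $\chi_{\f_\p}$ as a sum of additive characters modulo $\f_\p$.

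The main obstacle is reconciling the two period normalizations: \eqref{critical-val-period} yields algebraic values after division by the complex periods $\Omega_\infty^{nk}\Omega_\infty^{*nl}$, while the $\p$-adic Mellin transform of $\mu_{\mathfrak{a},\f_0,\c,\p}$ is naturally expressed through the CM periods $\Omega_\p^{nk}\Omega_\p^{*n(l-1)}$, with the shift from $l$ to $l-1$ reflecting the absorption of one power of $\Omega_\p^*$ into the smoothing prescribed by $\c$. Translating between these normalizations relies on the integral $p$-adic expansion of Eisenstein series from \cite{CS} together with the congruences for $E_l^k(z,w,L)$ established in \Cref{$p$-adic properties of $E_{k,l}$}; it is at this juncture that the earlier $p$-adic analysis of the paper is decisive.
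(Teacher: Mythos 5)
Your plan follows the paper's own route: the measure-valued cocycle $\mu_\l$ is built directly from the $\p$-integrality and congruences of $\tilde{\Phi}_\l$ (no Mahler-type argument is needed — the Riemann-sum computation of \Cref{general-region-integration} does the work), it is specialized at $\mathfrak{E}^*\otimes f^*$ exactly as you describe, and the interpolation identity is obtained by inclusion--exclusion over the cosets $\mathcal{O}_{\mathfrak{g}_p}$ together with the Gauss-sum expansion of $\chi_{\f_\p}^{-1}$ as in \Cref{expression-L-f-zero-cond}. One small correction: the shift from $\Omega_{\infty}^{*nl}$ to $\Omega_{\p}^{*n(l-1)}$ comes from the functional-equation duality $(k,l)\mapsto(l-1,k+1)$ encoded in $Q^*$ (so that $\deg q = n(l-1)$ and $\deg q^* = nk$ in \Cref{general-region-integration}), not from the smoothing by $\c$; otherwise your approach matches the paper's.
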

  
  This result strengthens the work of Colmez and Schneps in \cite{CS}, who proved interpolation for specific cases: $n=1$, $2$ with  $k\geq 0$ and $l>0$, and   $n\geq 3$ with $k=0$ and $l=1$. Our approach, which can be viewed as a cohomological interpretation of the results of Colmez-Schneps, allows us to obtain a more general result. The key to this improvement lies in the parametrization of critical values via the Eisenstein-Kronecker cocycle in \cite{BCG2}.

It is worth noting that Kings and Sprang \cite{KS} have obtained remarkably general interpolation results using different methods.

\subsection{Outline of  the paper}

This paper is organized as follows.
 In 
 Section \ref{sec:eisco}, we precisely define the space of distributions and introduce the Eisenstein-Kronecker and Dedekind distributions. We also recall the construction of the (nonsmoothed) Eisenstein-Kronecker cocycle $\Psi$
  from \cite{Oba} and \cite{FKW}, which is inspired by the work in \cite{Scz}, \cite{Col} and \cite{CS}. Additionally, we describe its connection to the partial $L$-function.
   In 
  Section \ref{Eisenstein-Kronecker numbers and properties}, we prove integrality and congruence properties for Eisenstein-Kronecker numbers and deduce corresponding properties for Dedekind distributions.
   In
\Cref{The smoothed cocycle and its integrality properties},
 we introduce the $\l$-smoothing operation for distributions and recall the definition of the ($\l$-smoothed) Eisenstein-Kronecker cocycle. We prove its integrality properties and describe the connection between $\Psi_\l$ and $\Phi_\l$, from  which we deduce the integrality of the critical values.
Finally, in \Cref{p-adic measures and p-adic zeta functions}, we
construct the $p$-adic measure  $\mu_{\mathfrak{a},\f_0,\c,\p}$
from $\tilde{\Phi}_\l$ and prove the interpolation of  critical values.


\section{The Eisenstein-Kronecker cocycle}
\label{sec:eisco}

We begin this section by fixing some notation and formally defining the space of distributions. Next, we introduce the various distributions that will be used in this article, namely, the Eisenstein-Kronecker and Dedekind distributions (cf. \Cref{Eisenstein-Kronecker Distributions}), and the Colmez distribution (cf.  \Cref{The Eisenstein cocycle}). In  \Cref{The Eisenstein cocycle}, we recall the cocycle $\Psi(s)$ constructed in \cite{Oba} and \cite{FKW},which serves as a foundation for applying the methods of Charollois-Dasgupta \cite{CD}. Finally,  in \Cref{sec:heckeL}, we describe the parametrization of the partial $L$-function in terms of the cocycle $\Psi(s)$.

\subsection{Notation}\label{Notation}

 Let $\alpha \mapsto \overline{\alpha}$
denote the non-trivial automorphism of $K$.
Let $x=(x_1,\dots, x_n)$ denote a row vector in $K^n$ and $y=(y_1,\dots, y_n)^t$ a column vector in $K^n$. We 
let $\langle x,y\rangle$ denote the usual dot product between $x$ and $y$, i.e., 
\be\label{dot-prod}
\langle x,y\rangle= \sum x_iy_i.
\ee
We define the pairing
$e(\cdot|\cdot) \,:\, K^n\times K^n \to \C^{\times}$
given 
by 
\begin{equation*}\label{exp-pairing}
e(x|x^*)=e^{2\pi i  \mathrm{Tr}_{K/\Q}\left(\sum_{i=1}^nx_i\overline{x_i^*}\right) }
\end{equation*}
for vectors $x=(x_1,\dots, x_n)$ and $x^*=(x^*_1,\dots, x^*_n)$  in $K^n$.

Let $\Xi\subset K^n$ be a finitely generated $\mathcal{O}$-module   of rank $n$, i.e. $\dim_K \Xi \otimes_{\mathcal{O}}K=n $. Let $W_n$ denote the group of all such $\mathcal{O}$-modules.
We denote
by $\Xi^*\in W_n$ the dual of $\Xi$ with respect to the above pairing, i.e.,
$
\Xi^*=\{v\in K^n\,:\, e(x|v)=1, \ \forall x\in \Xi \}.
$
We will reserve the letter $\Lambda$ exclusively for 
$\mathcal{O}$-modules that are of the form 
\begin{equation*}\label{defin-lambda}
\Lambda:=\Lambda_1\times\dots\times \Lambda_n
\end{equation*}
 for some fractional ideals $\Lambda_i$ of $K$. Then
$
\Lambda^*=\Lambda_1^*\times \cdots \times \Lambda_n^*$,
where $\Lambda_i^*$ is the fractional ideal $K$ such that $\mathrm{Tr}_{K/\Q}(\Lambda_i \overline{\Lambda_i^*})\subset \mathcal{O}$, i.e.,
\[
\Lambda^*_i=(\overline{\Lambda_i\mathfrak{D}_{K}})^{-1}=\frac{1}{2i\mathrm{Vol(\Lambda_i)}}\Lambda_i,
\]
where $\mathfrak{D}_{K}$ is the absolute different ideal
of $K$ and $\mathrm{Vol(\Lambda_i)}$
is the volume of a fundamental parallelogram in $\C$ determined by $\Lambda_i$. This volume is defined in the following way: if $w_1$ and $w_2$ form a basis for $\Lambda_i$,
with $\Im(w_2/w_1)>0$, then $\mathrm{Vol}(\Lambda_i)=\Im(\overline{w_1}w_2)$. Moreover, 
$
\mathrm{Vol}(\Lambda_i)=2^{-1}N_{K/\Q}(\Lambda_1)\sqrt{|\mathfrak{d}_K|},
$
 where $\mathfrak{d}_K$ denotes the absolute discriminant of $K$.

If $\Xi$ and $\Upsilon$ are finitely generated $\mathcal{O}$-modules of rank $n$ contained in $K^n$, then we let
$
[\Xi:\Upsilon]_{\mathcal{O}}
$
be the module index of $\Xi$ and $\Upsilon$ (cf. e.g. \cite{ballew1970module} and \cite[Chap. I]{cassels1967algebraic}). This satisfies
$
[\Upsilon^*:\Xi^*]_{\mathcal{O}}=\overline{[\Xi:\Upsilon]}_{\mathcal{O}}.$
Suppose furthermore that $\Upsilon\subset \Xi$, then let $[\Xi:\Upsilon]$
denote the cardinality of  the finite  $\mathcal{O}$-module $\Xi/\Upsilon$. 
We have the character relations for every $u\in \Xi$
\begin{equation}\label{character-identity-2}
\frac{ 1 }{ [\Xi:\Upsilon] }
\sum_{y\in\Upsilon^*/\Xi^*} e(u| y)
=
\begin{cases}
1, &  \text{if $u\in \Upsilon$,}\\
0, & \text{if $u  \notin \Upsilon$.}
\end{cases}
\end{equation}
For arbitrary $\Xi$ and $\Upsilon$, we define $[\Xi:\Upsilon]$
as the quotient of $[\Xi:\Xi\cap \Upsilon]$ by $[\Upsilon:\Xi\cap \Upsilon]$.
Then $
[\Xi:\Upsilon]
=N_{K/\Q} \,[\Xi:\Upsilon]_{\mathcal{O}}
$
and
$
[\Xi:\Upsilon]=[\Upsilon^*:\Xi^*].
$

For $A\in \mathrm{GL}_n(K)$, let $A^h$ denote the Hermitian matrix of $A$
, i.e., $A^h=\overline{A}^t$ and by $A^*$  the inverse matrix $A^{-h}$, so that
$
e(xA|x^*)=e( x|x^*A^h )$
and
$
e(xA\,|\,x^*A^*)=e(x|x^*)
$
 for all $x,\,x^*\in K^n$.
Moreover, $(\Xi A)^*=\Xi ^* A^*$, as well as $[\Xi:\Xi A]_{\mathcal{O}}=\det (A) \mathcal{O}$
and $[\Xi:\Xi A]=\det (A)\cdot \overline{\det(A)}$.

Let $R_n$ the set of matrices $M$ in $\mathrm{GL}_n(\C)$ 
such that there exists a degree $n$ extension $F/K$ such that its columns $M_1,\cdots, M_n$,
 are conjugate over $K$, that is,
\begin{equation}\label{defin-matrix-M}
M=\begin{pmatrix}
\rho_{1}(m_1)&\cdots &\rho_{n}(m_1)\\
\vdots & \ddots &\vdots\\
\rho_{1}(m_n)&\cdots &\rho_{n}(m_n)\\
\end{pmatrix}
\end{equation}
where $m_1,\dots, m_n\in F$ and the $\rho_i$ are  distinct (up to conjugation) embeddings of $F$ into $\C$ fixing $K$. Furthermore,  if $A\in \mathrm{GL}_n(K)$, then $M^{-t}$, $\overline{M}$,  $M^*$ and $AM$ are in $R_n$ as well.

Let  $H$ the space of homogeneous polynomials $q(X)\in K[X_1,\dots, X_n]$ with the right action of $A\in \mathrm{GL}_n(K)$ given by $qA(X)=q(X\overline{A}^{-1})$.
We define  the polynomials  $N(X)=N(X_1,\dots, X_n)=X_1\cdots X_n$,  $N_M(X):=N(XM)$ and $X^{\mathbf{r}}:=X_1^{r_1}\cdots X_n^{r_n}$,
where  $M\in R_n$ and $\mathbf{r}=(r_1,\dots, r_n)\in \Z_{n\geq 0}^{n}$, which are all in $H$. For a fix $k\geq 0$, let $H_k$ be the $K$-subspace of $H$ generated by all the $N_M(X)^k$ for all $M\in R_n$.

Let $\mathcal{W}_n:=\{\varXi=\Xi\times\Xi^*\in W_n\times W_n\,:\,\Xi\in W_n\}$.
We can define a right action of $A\in \mathrm{GL}_n(K)$ on $Q=(q^*,q)\in H^2$, $U=(u,u^*)\in K^{2n}$ and $\varXi\in \mathcal{W}_n$, respectively, by
\[
QA=(qA,q^*A^*),\quad
UA=(uA,u^*A^*),\quad
\varXi A=\Xi A\times \Xi^*A^*.
\]
This action defines scalar multiplication by $t\in K^{\times}$ as
$Qt=(\overline{t}^{-\deg q}q, t^{\deg q^*}q^*)$,
$Ut=(u t,u^*\overline{t}^{-1})$, and
$\varXi t=\Xi t\times \Xi^*\overline{t}^{-1}$. Also, note that $Q\in H^2$ defines the map $K^{2n}\to K\,: X=(x,x^*)\mapsto Q(X)=q(x)\cdot q^*(x^*)$.

\subsection{The space of distributions}
\label{The space of distributions}
Consider the product
$
\mathbf{G}_n:=H^2\times K^{2n}\times \mathcal{W}_n$
 with the right action of
$A\in \mathrm{GL}_n(K)$ 
given by
\begin{equation}\label{Action-Gln-tuple}
(Q,U,\varXi)\cdot A=(QA,UA,\varXi A).
\end{equation}
and scalar multiplication by $t\in K^{\times}$ by
\begin{equation}\label{scalar-mult-tuple}
(Q,U,\varXi)\cdot t= (Qt,Ut,\varXi t).
\end{equation}

For $(Q,U,\varXi)\in \mathbf{G}_n$ we define the dual element 
$(Q,U,\varXi)^*:=(Q^*,U^*,\varXi^*)\in  \mathbf{G}_n$,
where $Q^*=(q^*,q)$, $U^*=( -u^*, u )$ and $\varXi^*= \Xi^* \times \Xi$.

For $k\geq 0$, 
we let
$\mathbf{G}_{n,k}=(H_k\times H)\times K^{2n}\times \mathcal{W}_{n}$.
Let $\mathbf{H}_n=J\times K^{2n}\times \mathcal{W}_{n}$, where $J$ is any $K$-subspace of $H^2$.
 Recall that $S$ denotes an extension field of $K$.

\begin{defi}\label{defin-space-dist}
We define $K$-vector  space of  distributions 
 $\mathcal{D}_S(\mathbf{H}_n)$ to be the space of all maps
$\phi :\mathbf{H}_{n} \to S$ that are $K$-linear in the $Q$-component and 
such that for every $(Q,U,\varXi)\in \mathbf{H}_n$ the following holds:
\begin{enumerate}[(i)]
\item $\phi(Q,U+V,\varXi)=e(u|v^*)\phi(Q,U,\varXi)$ for  all $V=(v,v^*)\in \varXi$ ,
\item\label{homon-t} $\phi(Q,U,\varXi)=t^n\phi\big(\,(Q,U,\varXi)\cdot t\,\big)$  for all $t\in K^{\times}$, and

\item\label{prop-dist-main-1} 
for any $\varUpsilon=\Upsilon\times \Upsilon^*\in \mathcal{W}_n$
\begin{equation}\label{prop-dist-main-3}
\phi(Q,U,\varXi)=
\frac{1}{[\Upsilon:\Xi\cap \Upsilon]}
\sum_{V\in \varXi/ \varXi\cap \varUpsilon}
e(-u|v^*)
\phi(Q,U+V,\varUpsilon).
\end{equation}

\end{enumerate} 

If $A\in \mathrm{GL}_n(K)$, we can define a distribution 
$A\phi\in \mathcal{D}_S(\mathbf{H}_n A)$ by
\begin{equation}\label{action-on-SE}
(A\cdot
\phi)(Q,U,\varXi)=
\det(A)\,\phi\big( \,(Q,U,\varXi)\cdot A \,\big).
\end{equation}
 If  $\mathbf{H}_n$ is invariant under $\mathrm{GL}_n(K)$,
then \eqref{action-on-SE} defines a left action of $\mathrm{GL}_n(K)$ on 
$\mathcal{D}_S(\mathbf{H}_n)$.
According to \Cref{defin-space-dist} \eqref{homon-t}, this action is well-defined on 
$\mathrm{PGL}_n(K)$.

\end{defi}

\begin{rem}\label{notation-dis-Gn}
Let $X=U+\varXi$ and $\phi\in \mathcal{D}_S(\mathbf{G}_n)$. We
 define the map $\phi_X:\mathbf{G}_n\to S$ 
\begin{equation*}\label{phi-x-nota}
\phi_X(Q,V,\varUpsilon)=
   \frac{e(-u|v^*-u^*)}
         {[\Upsilon:\Xi\cap \Upsilon]}
         \phi(Q,V,\varUpsilon).
\end{equation*}
Note that if $V\in X$, the value $\phi_X(Q,V,\varUpsilon)$ is independent of the class
of $V$ in $K^{2n}/\varUpsilon$. To further simplify the notation, when $Q=(1,1)$ we remove the letter $Q$ and simply write  $\phi(U,\varXi)$, $\phi_X(V,\varUpsilon)$,  etc.

\end{rem}

With this notation, we can write \eqref{prop-dist-main-3} as 
\begin{equation}\label{dist-gen-xi-upsilon}
\phi(Q,U,\varXi)
=\sum_{
\substack{ V\in K^{2n}/ \varUpsilon\\
 V\equiv U \,\mathrm{mod}\,\varXi}
}
\phi_{X}(Q,V,\Upsilon).
\end{equation}
Furthermore, we have the following result.

\begin{lem}\label{extra-prop-dist-13}
Let $\varTheta=\Theta\times \Theta^*\in \mathcal{W}_{n}$ such that
$\Xi\cap \varTheta\subset \varXi\cap \varUpsilon$. Suppose that 
$\varXi\cap \varUpsilon\subset \varUpsilon$ induces a bijective map
\begin{equation}\label{bij-map-dist-123-23}
\varXi\cap \varUpsilon\,\big/\,\varXi\cap \varTheta
\ \xrightarrow{\sim}\
\varUpsilon \,\big/\,\varUpsilon\cap \varTheta.
\end{equation}
Then, for any $V\in X$,
\begin{equation}\label{phi-Xi-upsioon-12}
\phi_X(Q,V,\varUpsilon)
=\sum_{
\substack{
W\in K^{2n}/\varTheta\\
W\equiv V\,\mathrm{mod}\, \varXi\cap \varUpsilon
}
}
\phi_X(Q,W,\varTheta).
\end{equation}
The bijection \eqref{bij-map-dist-123-23} will be satisfied, for example,
 if 
 \begin{equation}\label{coprime-condo}
 \text{
  $[\Xi:\Theta\cap \Xi]_{\mathcal{O}}$ is coprime to
 $[\Theta:\Theta\cap \Xi]_{\mathcal{O}}$.
 }
 \end{equation}
\end{lem}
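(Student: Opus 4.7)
\emph{Plan.} The strategy is to apply property~(iii) of Definition~\ref{defin-space-dist} to $\phi(Q,V,\varUpsilon)$ in order to express it through values at the finer lattice $\varTheta$, and then to reindex the resulting sum using the hypothesized bijection. Unfolding the definition of $\phi_X$ from Remark~\ref{notation-dis-Gn} and substituting, I expect to arrive at
\[
\phi_X(Q,V,\varUpsilon) \;=\; \frac{e(-u\,|\,v^*-u^*)}{[\Upsilon:\Xi\cap\Upsilon]\,[\Theta:\Upsilon\cap\Theta]} \sum_{Y\in \varUpsilon/\varUpsilon\cap\varTheta} e(-v\,|\,y^*)\,\phi(Q,V+Y,\varTheta).
\]
The standing containment hypothesis forces $\varXi\cap\varUpsilon\cap\varTheta = \varXi\cap\varTheta$, so \eqref{bij-map-dist-123-23} lets me parametrize $Y$ by representatives $Y_1 \in \varXi\cap\varUpsilon$ taken modulo $\varXi\cap\varTheta$. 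Since $V\in X$ and $Y_1 \in \varXi$, we have $V+Y_1 \in X$, which is exactly the condition needed to convert each $\phi(Q,V+Y_1,\varTheta)$ back to $\phi_X(Q,V+Y_1,\varTheta)$ via the defining formula at the lattice $\varTheta$.

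Two simplifications then close the argument. After cancelling the $e(-u|v^*-u^*)$ contributions, the surviving character is $e(u-v\,|\,y_1^*)$, which equals $1$: indeed $v-u\in\Xi$ (since $V\in U+\varXi$) while $y_1^*\in\Xi^*$ (since $Y_1\in\varXi\cap\varUpsilon\subset\varXi$), so the pairing is trivial by the defining property of the dual module. The remaining scalar prefactor is
\[
\frac{[\Theta:\Xi\cap\Theta]}{[\Upsilon:\Xi\cap\Upsilon]\,[\Theta:\Upsilon\cap\Theta]},
\]
which I claim equals $1$. To see this I would derive an auxiliary bijection $\varUpsilon\cap\varTheta/\varXi\cap\varTheta \xrightarrow{\sim} \varUpsilon/\varXi\cap\varUpsilon$ from the given one: the inclusion $\varUpsilon\cap\varTheta \hookrightarrow \varUpsilon$ composed with the quotient map has kernel $\varXi\cap\varUpsilon\cap\varTheta = \varXi\cap\varTheta$ (injectivity), while surjectivity comes from lifting any $u\in\varUpsilon$ through \eqref{bij-map-dist-123-23} to find $\xi\in\varXi\cap\varUpsilon$ with $u-\xi\in\varUpsilon\cap\varTheta$. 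Combined with multiplicativity of the module index along $\varXi\cap\varTheta \subset \varUpsilon\cap\varTheta \subset \varTheta$, this yields the desired equality.

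For the sufficient condition \eqref{coprime-condo}, I would argue the bijection by a Chinese-remainder-type decomposition on $\mathcal{O}$-modules: coprimality of the $\mathcal{O}$-ideals $[\Xi:\Theta\cap\Xi]_\mathcal{O}$ and $[\Theta:\Theta\cap\Xi]_\mathcal{O}$ (together with the corresponding dual statement for $\Xi^*,\Theta^*$ provided by the volume formula of Section~\ref{Notation}) produces the internal decomposition $\varUpsilon = (\varXi\cap\varUpsilon) + (\varUpsilon\cap\varTheta)$, which is exactly what is needed for surjectivity of \eqref{bij-map-dist-123-23}; injectivity follows from the containment hypothesis as above. The main obstacle I anticipate is the careful bookkeeping of the character cancellations and the index multiplicativity in the main step, in particular checking that the auxiliary bijection delivers precisely the multiplicative identity rather than an off-by-unit correction coming from mismatched normalizations between $K^n$ indices and their $K^{2n}$ counterparts.
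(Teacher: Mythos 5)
Your proof of the main identity \eqref{phi-Xi-upsioon-12} is correct and follows essentially the paper's own route: expand $\phi(Q,V,\varUpsilon)$ through $\varTheta$ using \Cref{defin-space-dist}\eqref{prop-dist-main-1}, reindex the resulting sum over $\varUpsilon/\varUpsilon\cap\varTheta$ by representatives in $\varXi\cap\varUpsilon$ via \eqref{bij-map-dist-123-23}, kill the residual character $e(u-v|y_1^*)$ because $v-u\in\Xi$ and $y_1^*\in\Xi^*$, and reduce everything to the index identity $[\Upsilon:\Xi\cap\Upsilon]=[\Upsilon\cap\Theta:\Xi\cap\Theta]$. The only divergence is cosmetic: you extract this identity from the auxiliary bijection $\varUpsilon\cap\varTheta/\varXi\cap\varTheta\xrightarrow{\sim}\varUpsilon/\varXi\cap\varUpsilon$, whereas the paper reads $[\Upsilon:\Upsilon\cap\Theta]=[\Upsilon\cap\Xi:\Xi\cap\Theta]$ off \eqref{bij-map-dist-123-23} and combines it with the multiplicativity relation \eqref{iden-mod-inex-upsi-Xi-theta}; the two derivations are interchangeable. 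Your worry about an ``off-by-unit'' mismatch between $K^n$ and $K^{2n}$ indices dissolves: both \eqref{bij-map-dist-123-23} and your auxiliary map are products of the two componentwise inclusion-induced maps, each injective by the same kernel computation, and a product of injections of finite sets that is bijective forces each factor to be bijective; hence the first-component index equality that actually enters the prefactor.

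The one place where you take a genuinely different path is the sufficiency of \eqref{coprime-condo}, which you handle by a localization/CRT decomposition $\varUpsilon=(\varXi\cap\varUpsilon)+(\varUpsilon\cap\varTheta)$ rather than the paper's divisibility argument with $\mathcal{O}$-module indices. Your route works, but the decomposition does \emph{not} follow from coprimality of $[\Xi:\Xi\cap\Theta]_{\mathcal{O}}$ and $[\Theta:\Xi\cap\Theta]_{\mathcal{O}}$ alone: you must also invoke the standing containment $\varXi\cap\varTheta\subset\varXi\cap\varUpsilon$ on \emph{both} components, whose dual half is equivalent (locally) to $\Upsilon_{\mathfrak{p}}\subseteq\Xi_{\mathfrak{p}}+\Theta_{\mathfrak{p}}$. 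Concretely, at a prime $\mathfrak{p}$ not dividing $[\Xi:\Xi\cap\Theta]_{\mathcal{O}}$ one has $\Xi_{\mathfrak{p}}\subseteq\Theta_{\mathfrak{p}}$ and the containment forces $\Upsilon_{\mathfrak{p}}\subseteq\Theta_{\mathfrak{p}}$, while at a prime not dividing $[\Theta:\Xi\cap\Theta]_{\mathcal{O}}$ one has $\Theta_{\mathfrak{p}}\subseteq\Xi_{\mathfrak{p}}$ and the containment forces $\Upsilon_{\mathfrak{p}}\subseteq\Xi_{\mathfrak{p}}$; either way the local decomposition is immediate, and coprimality guarantees every prime falls into one of the two cases. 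Without that extra input the claimed decomposition is false in general, so this step should be made explicit.
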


\begin{proof}
By  \Cref{defin-space-dist} \eqref{prop-dist-main-1}, the right-hand side of \eqref{phi-Xi-upsioon-12} is equal to
\[
\frac{1}{[\Upsilon:\Xi\cap \Upsilon][\Theta:\Theta\cap \Upsilon]}
\sum_{Z=(z,z^*) \in \varUpsilon/\varUpsilon\cap \varTheta}
e(-u|v^*+z^*-u^*)e(u-v|z^*)
\phi(Q,V+Z,\varTheta).
\]
On the other hand, we have the obvious identity
\[
[\Theta:\Xi\cap \Theta]
=
[\Theta:\Theta\cap \Upsilon]
[\Theta\cap \Upsilon:\Upsilon\cap \Xi].
\]
Thus, \eqref{phi-Xi-upsioon-12} follows immediately 
from \eqref{bij-map-dist-123-23} after showing that
$[ \Upsilon: \Upsilon\cap \Xi ]=[ \Upsilon\cap \Theta : \Xi \cap \Theta   ]$.
This, in turn, follows from the indentity
\begin{equation}\label{iden-mod-inex-upsi-Xi-theta}
[ \Upsilon: \Upsilon\cap \Xi ]_\O
[ \Upsilon\cap \Xi : \Xi \cap \Theta   ]_\O
=
[ \Upsilon: \Upsilon\cap \Theta ]_\O
[ \Upsilon\cap \Theta : \Xi \cap \Theta   ]_\O
\end{equation}
and the bijection \eqref{bij-map-dist-123-23}.

As for the second part of the lemma, observe that the map \eqref{bij-map-dist-123-23} is clearly injective. Thus, it will be bijective if and only if
\begin{equation}\label{ident-Xi-Xi-Upsilon-Theta-kok}
[\Upsilon\,:\,\Upsilon\cap \Theta]_{\mathcal{O}}
=[ \Xi\cap \Upsilon\,:\,\Xi \cap \Theta]_{\mathcal{O}}
\end{equation} 
and 
$[\Upsilon^*\,:\,\Upsilon^*\cap \Theta^*]_{\mathcal{O}}
=[ \Xi^*\cap \Upsilon^*\,:\,\Xi^* \cap \Theta^*]_{\mathcal{O}}$. However,
\eqref{ident-Xi-Xi-Upsilon-Theta-kok} follows directly from \eqref{iden-mod-inex-upsi-Xi-theta} after observing that
$[ \Upsilon\cap \Xi : \Xi \cap \Theta   ]_\O$ divides the left-hand side of \eqref{ident-Xi-Xi-Upsilon-Theta-kok} and is coprime to
$[ \Upsilon\cap \Theta : \Xi \cap \Theta   ]_\O$ by \eqref{coprime-condo}.
\end{proof}

We finish this section by formally describing how to obtain a distribution in $\mathcal{D}_S(\mathbf{G}_n)$ from distributions in $\mathcal{D}_S(\mathbf{G}_1)$.

\begin{prop}\label{product of distributions}
Given the  distributions $\phi_i\in \mathcal{D}_S(\mathbf{G}_1)$, $i=1,\dots,n$,
there exists a distribution $\phi\in \mathcal{D}_S(\mathbf{G}_n)$ such that
\begin{equation}\label{defin-phi-general}
\phi(X^{\mathbf{t}},X^\mathbf{r},U,\varLambda)=
\prod_{i=1}^n \phi_i(X^{t_i},X^{r_i},u_i,u_i^*,\varLambda_i)
\end{equation}
for every  $\mathbf{t},\mathbf{r}\in \Z_{\geq 0}^n$, $U=(u_1,\dots,u_n, u_1^*,\dots, u_n^*)\in K^{2n}$, and $\varLambda=\prod_{i=1}^n \varLambda_i$, where
$
\Lambda_i=\Lambda_i\times \Lambda_i^*.$
We denote this distribution by $\prod _{i=1}^n \phi_i$.
\end{prop}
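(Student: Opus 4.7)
The plan is to construct $\phi$ in two stages: first define it on product (``split'') lattices $\varLambda = \prod_i \varLambda_i$ via the prescribed formula \eqref{defin-phi-general}, then extend it to arbitrary $\varXi \in \mathcal{W}_n$ by imposing the distribution relation \eqref{prop-dist-main-3} of Definition~\ref{defin-space-dist}.

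For the first stage, define $\phi(X^{\mathbf{t}}, X^{\mathbf{r}}, U, \varLambda)$ by the product formula \eqref{defin-phi-general} on monomials and extend $K$-linearly in the polynomial argument (the monomials $X^{\mathbf{r}}$ span $H$). I then check that conditions (i)--(iii) of Definition~\ref{defin-space-dist} hold restricted to product lattices. Condition (i) factors coordinatewise since $e(u|v^*) = \prod_i e(u_i|v_i^*)$; condition (ii) follows because $t^n = \prod_i t$ and the $K^\times$-action \eqref{scalar-mult-tuple} decomposes across indices; and condition (iii) between two product lattices $\varLambda, \varLambda'$ reduces to the product of the $1$-dimensional distribution relations, using $\varLambda/\varLambda\cap\varLambda' \cong \prod_i (\varLambda_i/\varLambda_i\cap\varLambda_i')$ and $[\Lambda':\Lambda\cap\Lambda'] = \prod_i [\Lambda_i':\Lambda_i\cap\Lambda_i']$.

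For the second stage, for any $\varXi \in \mathcal{W}_n$ pick an auxiliary product lattice $\varLambda$ and set
\[
\phi_\varLambda(Q,U,\varXi) := \frac{1}{[\Lambda:\Xi\cap\Lambda]}\sum_{V \in \varXi/\varXi\cap\varLambda} e(-u|v^*)\,\phi(Q,U+V,\varLambda),
\]
mimicking \eqref{prop-dist-main-3}. The main obstacle is proving this is independent of the choice of $\varLambda$. To compare $\phi_\varLambda$ and $\phi_{\varLambda'}$ attached to two product lattices $\varLambda, \varLambda'$, I would build a third product lattice $\varTheta = \prod_i \varTheta_i$ with each $\Theta_i \subset \Lambda_i \cap \Lambda_i'$ chosen sufficiently divisible so that the coprimality condition~\eqref{coprime-condo} holds simultaneously for the pair $(\varXi,\varTheta)$ against both $\varLambda$ and $\varLambda'$ (for instance, take $\Theta_i = N(\Lambda_i \cap \Lambda_i')$ for a large rational integer $N$ coprime to the relevant module indices of $\Xi$ and $\Lambda, \Lambda'$). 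Applying Lemma~\ref{extra-prop-dist-13} twice, once with $\varUpsilon = \varLambda$ and once with $\varUpsilon = \varLambda'$, rewrites both candidate values as the same sum $\sum_{W \in K^{2n}/\varTheta,\ W \equiv U \bmod \varXi} \phi_X(Q,W,\varTheta)$, forcing equality.

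Finally, I would verify that the extended $\phi$ lies in $\mathcal{D}_S(\mathbf{G}_n)$. Conditions (i) and (ii) propagate from the product-lattice case by direct substitution into the defining formula (after a change of variables $V \mapsto V + V_0$ in the sum for (i), and using the scaling of $[\Lambda:\Xi\cap\Lambda]$ and the product-lattice version of (ii) for the homogeneity); condition (iii) between two general lattices $\varXi, \varUpsilon$ reduces to the product-lattice case by expanding each side with respect to a common auxiliary product lattice $\varLambda$ and invoking (iii) on product lattices together with the well-definedness just proved. Apart from the independence argument in the second stage, every verification is routine bookkeeping.
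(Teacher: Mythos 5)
Your two-stage strategy (define $\phi$ on product lattices by the prescribed formula and $K$-linearity, verify the axioms there coordinatewise, then extend to general $\varXi$ by imposing relation \eqref{prop-dist-main-3} against an auxiliary $\varLambda$) is exactly the route the paper takes, and you are right that the only nontrivial point is the independence of the extension from the choice of $\varLambda$ — a point the paper's own proof passes over in one sentence. However, your mechanism for proving that independence does not work. You propose a common refinement $\varTheta=\prod_i\varTheta_i$ with $\Theta_i=N(\Lambda_i\cap\Lambda_i')$ and then invoke Lemma~\ref{extra-prop-dist-13}. But an element of $\mathcal{W}_n$ has the form $\Theta\times\Theta^*$ with $\Theta^*$ \emph{determined} by $\Theta$ via duality, and shrinking $\Theta$ enlarges $\Theta^*$: here $\Theta^*=\tfrac{1}{N}(\Lambda_i\cap\Lambda_i')^*\supset\Lambda^*$. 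Consequently $\Xi^*\cap\Theta^*\supset\Xi^*\cap\Lambda^*$, so the hypothesis $\varXi\cap\varTheta\subset\varXi\cap\varLambda$ of Lemma~\ref{extra-prop-dist-13} fails in the dual component, and the dual factor of the map \eqref{bij-map-dist-123-23}, namely $\Xi^*\cap\Lambda^*/\Xi^*\cap\Theta^*\to\Lambda^*/\Lambda^*\cap\Theta^*$, is not even well posed (the alleged denominator is not contained in the numerator, and the target is the trivial group). This is not fixable by a cleverer choice of product lattice: at an inert or ramified prime one cannot make $\Theta$ and $\Theta^*$ simultaneously more divisible, so no $\varTheta\in\mathcal{W}_n$ refines two arbitrary $\varLambda,\varLambda'$ in the sense the lemma requires. (The paper's uses of Lemma~\ref{extra-prop-dist-13}, with $\varTheta=\varXi_{\mathtt{M},\mathtt{N}}$, exploit the split prime $\p\overline{\p}$ precisely to shrink both components at $\overline{\p}$ while growing both at $\p$.)

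The independence should instead be proved directly: apply relation (iii) for the pair of product lattices $(\varLambda,\varLambda')$ to each term $\phi(Q,U+V,\varLambda)$ in the definition of $\phi_{\varLambda}(Q,U,\varXi)$, then collapse the resulting double sum over $\varXi/\varXi\cap\varLambda\times\varLambda/\varLambda\cap\varLambda'$ to the single sum over $\varXi/\varXi\cap\varLambda'$ using property (i) and the character orthogonality \eqref{character-identity-2} (the sum over $v^*\in\Xi^*/\Xi^*\cap\Lambda^*$ acts as the indicator of $\Xi$ inside $\Xi+\Lambda$, and the normalizing index $[\Lambda:\Xi\cap\Lambda]=[\Xi^*:\Xi^*\cap\Lambda^*]$ is exactly the order of that character group). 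This computation is the actual content behind the paper's assertion that the distribution axioms ``follow from the fact that the $\phi_i$ are one-dimensional distributions,'' and with it in place the rest of your Stage 3 goes through as you describe.
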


\begin{proof}
Starting with \eqref{defin-phi-general}, we can extend this definition to all
of $Q=(q(X),q^*(X))\in H^2$, with $q(X)=\sum_{\mathbf{t}}q_{\mathbf{t}}X^{\mathbf{t}}$
and $q^*(X)=\sum_{\mathbf{r}}q_{\mathbf{r}}X^{\mathbf{t}}$, by
\[
\phi(Q,U,\varLambda)
=\sum_{\mathbf{t},\mathbf{r}}
\,
q_{\mathbf{t}}
\cdot
q^*_{\mathbf{r}}
\cdot 
\phi(X^{\mathbf{t}},X^{\mathbf{r}},U,\varLambda).
\]

Now, for a general $\varXi\in \mathcal{W}_n$ we define $\phi(Q,U,\varXi)$
using \eqref{prop-dist-main-3} with $\varUpsilon=\varLambda$.
The fact that $\phi$ satisfies the properties of a distribution follows from the fact that
the 
$\phi_i$ are  one-dimensional distributions.
\end{proof}

\subsection{Eisenstein-Kronecker Distributions}
\label{Eisenstein-Kronecker Distributions}

Let $L$ be a lattice in $\C$.
For integers $k\geq 0$ and $l>0$, and  $z,w\in \C$, we define the Eisenstein-Kronecker series
\[
E_{l}^k(z,w,L,s):= \sideset{}{'}\sum_{x\in L+z}e( x|w)\frac{1}{x^l}\frac{\overline{x}^k}{|x|^{2s}}.
\]
This series is absolutely convergent for $\mathrm{Re}(s)>1+\frac{k-l}{2}$ and has analytic continuation to $\C$.
In order to state the functional equation for Eisenstein-Kronecker series, it is convenient to introduce 
\[
E_{k,l}(z,w,L,s):=\Gamma(l+s)A(L)^{s-k}E_l^k(z,w,L,s),
\]
where $A(L)=\mathrm{Vol}(L)/\pi$. 
This series is related to the series  $H_k(s,z,u,L)$ in \cite{CS} via
\be\label{H-Kronecker}
e(-z|w)E_{k,l}(z,w,L,s)=
H_{k+l}(s+l,z,-2i\mathrm{Vol}(L)w,L).
\ee

The Eisenstein-Kronecker series satisfy the functional equation
(cf.  \cite[Equation (20),\S 4]{CS}) 
\be\label{symmetry-E-k-l}
E_{k,l}(z,w,L,s)=e(z|w)\cdot E_{l-1,k+1}\left(-2i\mathrm{Vol}(L)w,\frac{z}{-2i\mathrm{Vol}(L)},L,-s\right)
\ee
or, equivalently, 
\[
E_{k,l}(z,w,L,s)
=\frac{e(z|w)}{\big(\,2i\mathrm{Vol}(L)\,\big)^{l+k}}
\cdot 
E_{l-1,k+1}\left(-w,z,L^*,-s\right).
\]
When we evaluate at $s=0$, we shall simply write $E_l^k(z,w,L) := E_{l}^k(z,w,L,0)$ and   $E_{k,l}(z,w,L) := E_{k,l}(z,w,L,0)$ and refer to it as an Eisenstein-Kronecker number. 

We also have the following identities: Suppose $L'$ is a sublattice of $ L$
and  $c\in \C ^{\times}$ then
\[
E_l^k(z,w,L',s)=\frac{1}{[L:L']}\sum_{\tau \in (L')^*/L^*}e(-z|\tau)
E_l^k(z,w+\tau,L,s),
\]
\[
E_l^k(z,w,L,s)=
\sum_{\tau \in L/L'}
E_l^k(z+\tau,w,L',s), \ \text{and}
\]
\begin{equation}\label{homog-Eisenstein-k-l}
E_{k,l}(cz,\overline{c}^{-1}w,cL)=\frac{1}{c^{l+k}}E_{k,l}(z,w,L).
\end{equation}

The Eisentein-Kronecker sums define a distribution $E(\cdot,s)$ on $\mathbf{G}_1$ as follows
\[
E(Q,U,\varXi,s):=E^k_{l}(u,u^*,\mathfrak{a},s)
\]
 for $Q=(X^k,X^{l-1})$, $U=(u,u^*)\in K^2$ and $\varXi=\mathfrak{a}\times \mathfrak{a}^*$,
 where $\aa$ nonzero fractional ideal of $K$. Let $E^n(\cdot,s)$ be the distribution in $\mathcal{D}_{\C}(\mathbf{G}_{n})$
 defined as the product of the  distribution $E(\cdot,s)$ with itself $n$-times according to
\Cref{product of distributions}, i.e.,
\begin{equation}\label{defin-F-general}
E^n(X^{\mathbf{t}},X^{\mathbf{r}},U,\varLambda,s):=
\prod_{i=1}^n\Gamma(r_i+1) E_{r_i+1}^{t_i}(u_i,u_i^*,\Lambda_i,s),
\end{equation}
for  all $\mathbf{r}=(r_1,\dots, r_n)$ and $\mathbf{t}=(t_1,\dots, t_n)$ in $\Z^n_{\geq 0}$. 

Same as above,   when we evaluate at $s=0$ we will simply write $E^n(Q,U,\varXi)$.

\subsection{Colmez' distribution}

 In this section we  recall a distribution $F(s)$ introduced by Colmez in \cite{Col}, which is the base of the work of \cite{Oba} and \cite{FKW}, and then establish its connection with $E^n$.

For $Q=(N_{\overline{M}}(X)^k,q^*(X))\in H_k\times H$ and
 $(Q,U,\varXi)\in \mathbf{G}_{n,k}$, $k\geq 0$, we define
\[
F(Q,U,\varXi,s)
=
\sum_{\mathbf{r}}
q_{\mathbf{r}}^*
\sideset{}{'}\sum_{x\in \Xi+u}
e(x|u^*)\cdot 
\prod_{j=1}^n
\frac{r_j!}{x_j^{1+r_j}}
\cdot 
\frac{\overline{N_M(x)}^k}{|N_M(x)|^{2s}},
\]
where the sum is taken over all $x$ such that the denominator does not vanish.
The case $\varXi=\Lambda\times \Lambda^*$ and $q^*(X)=X^{\mathbf{r}}$ was first considered in
 \cite[Chapitre II. \S 0]{Col} where it is shown that the sum converges absolutely for $\text{Re}(s)>1+\frac{k}{2}$.  Since $F(s)=F(\cdot,s)$ satisfies the properties of   \Cref{defin-space-dist}, we can use   \eqref{prop-dist-main-3} to deduce the absolute convergence on all of $(Q,U,\varXi)\in \mathbf{G}_{n,k}$; in particular 
 $F(s)\in \mathcal{D}_\C(\mathbf{G}_{n,k})$ for $\text{Re}(s)>1+\frac{k}{2}$.
 For example, to see that $F(s)$ satisfies   \Cref{defin-space-dist} \eqref{prop-dist-main-1},
 we simply introduce the character identity (cf. \eqref{character-identity-2}), for $x\in \Xi+u$ and $v\in \Xi$,  
\begin{equation*}\label{char-iden-sigma}
\frac{ 1 }{ [\Upsilon:\Xi\cap \Upsilon] }
\sum_{v^*\in \Xi^*/\Xi^*\cap \Upsilon^*} e(x-u-v| v^*)
=
\begin{cases}
1, &  \text{if $x-u-v\in \Xi\cap\Upsilon$,}\\
0, & \text{if $x-u-v\in  \Upsilon\setminus (\Xi \cap \Upsilon)$,}
\end{cases}
\end{equation*}
 into the expression  
 \[
 \sum_{\mathbf{r}}
   q_{\mathbf{r}}^*
\sum_{v\in \Xi/\Xi\cap \Upsilon}\,
 \sideset{}{'}\sum_{x\in\, \Xi\cap \Upsilon+v+u} 
e(x|u^*)\cdot 
\prod_{j=1}^n
\frac{r_j!}{x_j^{1+r_j}}
\cdot 
\frac{\overline{N_M(x)}^k}{|N_M(x)|^{2s}}.
\]

 According to \cite[Th\'eor\'eme 2]{Col}, 
$F(s)$ has an analytic continuation
to $X(n,k)$, where
\begin{equation}\label{conditions-for-analyt.-cont}
X(n,k):=
\begin{cases}
\{s\in \C: \mathrm{Re}(s)>\frac{k}{2}-\frac{1}{2(n-2)}\},\ \text{if $n\geq 3$},\\
\C,\ \text{if $n=1$ or 2.} 
\end{cases}
\end{equation}
Originally, this result is proven for $\varXi$ of the form $\varLambda=\Lambda\times\Lambda^*$, however by \Cref{defin-space-dist} \eqref{prop-dist-main-1}  this extends to any $\varXi\in \mathcal{W}_n$. Whenever there is analytic continuation at $s=0$, we will write  $F$ instead of  $F(0)$. Therefore,
$F\in \mathcal{D}_\C(\mathbf{G}_{n,k})$ for $n=1,2$ with $k\geq 0$, and for $n\geq 3$ with $k=0$. 
Note that for $n=1$ the above definition coincides 
with the  classical Eisenstein-Kronecker series  defined in  
\Cref{Eisenstein-Kronecker Distributions}. 

The following result from Colmez  \cite[Th\'eor\'eme 4]{Col} relates  the distributions $F$ and $E^n$.

\begin{prop}\label{identity-colmez}
The distributions $F$ and $E^n$ coincide on $\mathbf{G}_{n,k}$
for $n=1,2$ with $k\geq 0$ and for $n\geq 3$ with $k=0$.
\end{prop}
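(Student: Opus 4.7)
The plan is to reduce the identity $F = E^n$ on $\mathbf{G}_{n,k}$ to an explicit statement on product lattices via the distribution property, use $K$-linearity in the $q$-component to further reduce to a single generator of $H_k$, and then verify each case separately, handing off the main analytic step to Colmez's theta-integral computation.

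First, given any $\varXi\in\mathcal{W}_n$, I pick a product lattice $\varLambda = \prod_{i=1}^n (\Lambda_i\times \Lambda_i^*)$ with $\Lambda_i$ fractional ideals of $K$. Applying \eqref{prop-dist-main-3} with $\varUpsilon = \varLambda$ expresses both $F(Q,U,\varXi,s)$ and $E^n(Q,U,\varXi,s)$ as identical finite sums of values on $\varLambda$, reducing the problem to the case $\varXi = \varLambda$. Next, since both distributions are $K$-linear in $q$ and $H_k$ is by definition $K$-spanned by $\{N_M(X)^k : M\in R_n\}$, it suffices to prove the identity for $Q = (N_{\overline{M}}(X)^k, X^{\mathbf{r}})$ with $M \in R_n$ and $\mathbf{r}\in\Z_{\geq 0}^n$.

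For $n=1$, $R_1$ consists of the single elements $m \in K^{\times}$ and $N_{\overline{M}}(X)^k = \overline{m}^k X^k$, so by $K$-linearity the identity reduces to $F(X^k, X^{l-1}, u, u^*, \mathfrak{a}, 0) = (l-1)!\,E_l^k(u,u^*,\mathfrak{a}) = E^1(X^k, X^{l-1}, u, u^*, \mathfrak{a})$, which is immediate from the definitions in \Cref{Eisenstein-Kronecker Distributions}. For $n\geq 3$ with $k=0$, the factor $\overline{N_M(x)}^k$ is trivial and $|N_M(x)|^{-2s}$ disappears at $s=0$ by the analytic continuation \cite[Th\'eor\`eme 2]{Col}; the resulting $n$-fold sum factors coordinate-wise over the product lattice $\varLambda$, each one-dimensional piece contributing $\Gamma(r_i+1)E^0_{r_i+1}(u_i, u_i^*, \Lambda_i)$, and \eqref{defin-F-general} then identifies the product with $E^n$.

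The heart of the argument is the case $n=2$, $k\geq 0$. I would expand $\overline{N_M(x)}^k = \sum_{|\mathbf{a}|=2k} c_{\mathbf{a}}\,\overline{x}^{\mathbf{a}}$ by the multinomial theorem with coefficients $c_{\mathbf{a}}\in K$, and attempt to exchange this finite sum with the lattice sum at $s = 0$; the desired answer is $\sum_{\mathbf{a}} c_{\mathbf{a}}\prod_{j=1}^2 r_j!\, E^{a_j}_{r_j+1}(u_j, u_j^*, \Lambda_j)$, which by $K$-linearity of $E^n$ in $q$ coincides with $E^2(N_{\overline{M}}(X)^k, X^{\mathbf{r}}, U, \varLambda)$. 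The obstacle is that the regulator $|N_M(x)|^{-2s}$ couples the two coordinates, so the termwise interchange cannot be performed naively in the region of absolute convergence. To handle this I would follow Colmez's strategy in \cite[\S II]{Col}: represent $F(\cdot,s)$ as a Mellin transform of a theta kernel that separates over $\varLambda$ once $\overline{N_M(x)}^k$ has been expanded monomially, justify termwise analytic continuation past $s=0$ by a dominated-convergence argument on a vertical strip, and identify each resulting piece with the Mellin integral of a one-dimensional theta series, producing the classical Eisenstein-Kronecker number. This step is the main obstacle, and it is precisely why the statement excludes $n\geq 3$ with $k\geq 1$, for which the analytic continuation domain $X(n,k)$ no longer reaches $s = 0$ for all the monomial summands appearing after expansion.
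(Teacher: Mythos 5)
Your reductions (to product lattices via the distribution property, and to the generators $Q=(N_{\overline{M}}(X)^k, X^{\mathbf{r}})$ by $K$-linearity in the $Q$-component) match the paper's, and the $n=1$ case is indeed immediate from the definitions. The gap is in how you handle the coupling factor $|N_M(x)|^{-2s}$, which is the entire content of the proposition. For $n\geq 3$, $k=0$ you assert that at $s=0$ "the resulting $n$-fold sum factors coordinate-wise," but the value at $s=0$ is defined only by analytic continuation (the naked sum $\sum_x e(x|u^*)\prod_j r_j!/x_j^{1+r_j}$ is not absolutely convergent when some $r_j\leq 1$), and the statement that the analytic continuation of the coupled sum at $s=0$ equals the product of the analytically continued one-dimensional Eisenstein--Kronecker numbers is precisely what needs proving; it is not a formal consequence of dropping the regulator. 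For $n=2$, $k\geq 0$ you correctly identify the obstacle but then defer to "Colmez's strategy" with an unspecified dominated-convergence argument; after the multinomial expansion of $\overline{N_M(x)}^k$ each term still carries $|N_M(x)|^{-2s}$, so the termwise identification at $s=0$ is again the whole problem, not a routine interchange.

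The paper resolves all cases at once by quoting Colmez's Th\'eor\`eme 4, which expresses $F(Q,u,0,\varLambda)$ as the differential operator $\Box_k$ applied to a product of one-variable series $\pi^{e_i}H_{e_i}(u_i,-zM_i^h,\Lambda_i,e_i)$, and then computes the derivatives using $\partial_y H_k=i\pi H_{k+1}$ to land on $\prod_i\Gamma(e_i)E_{e_i}^{t_i}(u_i,u_i^*,\Lambda_i)$. Two things your plan would need to add: (i) an explicit appeal to that theorem (or a genuine reproof of it), since it carries all the analytic weight that your sketch leaves open; and (ii) the extension from $u^*=0$ to general $u^*$, which the paper obtains by modifying the distribution $T_{u,\mathbf{t},\Lambda}$ in Colmez's argument to include the character $e^{2\pi i\langle L_i(z)|u^*\rangle}$ --- your proposal does not address this, and Colmez's theorem as stated does not cover it. Finally, your explanation of why $n\geq 3$ with $k\geq 1$ is excluded is slightly off: the restriction comes from the domain $X(n,k)$ of analytic continuation of $F(s)$ itself failing to contain $s=0$, not from the behavior of individual monomial summands after expansion.
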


\begin{proof}
By the definition of $F$, 
it will be enough to prove the result for
$F(Q,U,\varLambda)$ with $q(X)=N_{\overline{M}}(X)^k$ and $q^*(X)=X^{\mathbf{r}}$, $\mathbf{r}\in \Z_{\geq 0}^n$.
Suppose
$
q(X)=\sum_{\mathbf{t}}q_{\mathbf{t}}X^{\mathbf{t}}$. 
Then we will prove that
\begin{equation*}\label{defin-Fk}
F(Q,U,\varLambda)
=
\sum_{\mathbf{r},\mathbf{t}}
\,
q_{\mathbf{t}}
\cdot
\prod_{i=1}^n \Gamma(e_i)E_{e_i}^{t_i}(u_i,u_i^*,\Lambda_i),
\end{equation*}
where $e_i=r_i+1$ for $i=1,\dots, n$.

If we apply the result of \cite[Th\'eor\'eme 4]{Col} to 
 $\mathcal{L}:=M^{-1}$, $\mathcal{M}:=\overline{M}^t=M^h$ and $u^*=0$, we obtain
 \[
F(Q,u,0,\varLambda)=\Box_k\left( \prod_{i=1}^n \pi^{e_i}H_{e_i}(u_i, -zM^h_i,\Lambda_i, e_i))\right),
\]
where  $-zM_i^h$ denotes the $i$th entry of the vector $-zM^h$, $H_k$ is the series defined in  \eqref{H-Kronecker} and
\[
\Box_k(\varphi)=\left(\frac{i}{\pi}\right)^{nk}\prod_{i=1}^n\left( \frac{\partial}{\partial z_i}\right)^k\varphi(z)|_{z=0}.
\]
 However, to obtain a similar result for the sum $F(Q,U,\varLambda)$, for 
 $U=(u,u^*)\in K^{2n}$,
we need to simply  modify the 
 distribution $T_{u,\underline{t},\Lambda}(z)$ defined in \cite[page 184]{Col} for
 \[
T_{u,u^*,\mathbf{t},\Lambda}=\sum_{\omega\in \Lambda}\prod_{i=1}^n\frac{\Gamma(t_i)}{L_i(z)^{t_i}}e^{2\pi i \langle L_i(z)|u^* \rangle }\delta_{w+u},
 \]
thus obtaining
\[
F(Q,U,\varLambda)=\Box_k\left( \prod_{i=1}^n \pi^{e_i}
     H_{e_i}(u_i, -zM_i^h+u_i^*,\Lambda_i, e_i)\right),
\]
Noticing from \cite[ Chapitre II, \S 4]{Col} that
\[
\frac{\partial}{\partial y}H_k(x,y,\Lambda_i,s)=i\pi H_{k+1}(x,y,\Lambda_i,s)
\]
we deduce that
\[
\prod_{i=1}^n\left( \frac{\partial}{\partial z_i}\right)^k
\left(\prod_{i=1}^n \pi^{e_i} H_{e_i}(u_i,-zM_i^h+u_i^*,\Lambda_i, e_i)\right )
\]
equals
\begin{align*}
\sum_{\underline{t}=nk}
q_{\mathbf{t}}
\prod_{i=1}^n(-i\pi)^{t_i}\pi^{e_i}H_{e_i+t_i}(u_i,-zM_i^h+u_i^*,\Lambda_i, e_i)\\
=
(-\pi i)^{nk}\sum_{\underline{t}=nk}
q_{\mathbf{t}}
\prod_{i=1}^n\pi^{e_i}H_{e_i}(u_i,-zM_i^h+u_i^*,\Lambda_i, e_i).
\end{align*}
The result is now a consequence of the identity $\pi^{e_i}H_{e_i+t_i}(u_i,u_i^*,\Lambda_i, e_i)=\Gamma(e_i)E_{e_i}^{t_i}(u_i,u_i^*,\Lambda_i)$.
\end{proof}

For $\sigma\in M_{n}(K)$
we define 
the  Dedekind sum $D(s,\sigma)$ by
$
\sigma F(s)$ if $\det(\sigma)\neq 0$ and by zero if $\det(\sigma)=0$.
Observe, from the very definition, that for any $t\in K^{\times}$
\begin{equation}\label{hom-sigma-D}
D(s,t\sigma)
=
D(s,\sigma).
\end{equation}
By the \Cref{identity-colmez}, when $s=0$, we can define $D(\sigma)$ to be $\sigma E$ so that it is now defined on all of $\mathbf{G}_n$.


\subsection{The Sczech-Eisenstein-Kronecker cocycle}
\label{The Eisenstein cocycle}

We now recall the cocycle constructed in \cite{Oba} and \cite{FKW} up to a minor modification. This cocycle, built in the style of Sczech \cite{Scz}, will serve as a foundation for applying the methods of Charollois-Dasgupta \cite{CD} to our setting.

 Let  $\sigma=(\sigma_1,\dots,\sigma_n)$ be a matrix of  $\mathrm{GL}_n(K)$. 
Define, for $x\in K^n$,
\[
f(\sigma)(x)=\frac{\det(\sigma)}{\langle x,\sigma_1\rangle\dots\langle x,\sigma_n\rangle},
\]
and for any homogeneous polynomial $q^*\in H$
we let
\begin{align*}
f(\sigma)(q^*,x):=q^*(-\partial_{x_1},\dots,-\partial_{x_n})f(\sigma)(x)=\det(\sigma)\sum_{\mathbf{r}}q^*_{\mathbf{r}}(\sigma)
\prod_{j=1}^n\frac{r_j!}{\langle x,\sigma_j\rangle ^{1+r_j}},
\end{align*}
 where $ q^*_{\mathbf{r}}(\sigma)$ are the coefficients in the expansion of the  polynomial $q^*\sigma^*$, i.e.,
\be\label{coeff-poly-P}
q^*\sigma^*(X)=q^*(X\sigma^t)
=
\sum_{\mathbf{r}} q^*_{\mathbf{r}}(\sigma)X^{\mathbf{r}}.
\ee

Let $\mathfrak{A}=(A_1,\dots,A_n)$ be an $n$-tuple of $\mathrm{GL}_n(K)$. 
For every $x\in K^n-\{0\}$ and $k=1,\dots,n$, there is a smallest index $j_k$ such that the dot product $\langle x, A_{kj_k}\rangle$ is nonzero, where $A_{kj_k}$ denotes the $j_k$-th column of $A_k$; recall that $\langle \cdot,\cdot \rangle$ is the usual dot product defined in \eqref{dot-prod}.
For ease of notation, let $\sigma_k$ denote $A_{kj_k}$ for $1\leq k\leq n$, and by $\sigma=(\sigma_{ij})$ the square matrix whose $k$th column in given by $\sigma_k$. For this $\sigma$, associated to the given $x$, we define
\[
\psi(\a)(q^*,x)=f(\sigma)(q^*,x).
\]
The map $\psi$ satisfies the identity
\begin{equation*}\label{homogen-psi}
\psi(A\mathfrak{A})(q^*,x)=\det(A)\psi(\mathfrak{A})(q^*A^*,xA)
\end{equation*}
for all $A\in \mathrm{GL}_n(K)$ and the cocycle identity
\begin{equation*}\label{cocycle-property}
\sum_{i=0}^n(-1)^i\psi(A_0,\dots, \widehat{A}_i,\dots, A_n)=0
\end{equation*}
 as can be verified from  \cite[Section 2.2]{FKW}. We extend $\psi$ to all of
 $\Z[\mathrm{GL}_n(K)^n]$ by linearity and define for $\mathfrak{A}\in\Z[\mathrm{GL}_n(K)^n]$  
\begin{equation*}\label{eisco}
\Psi(s,\mathfrak{A},Q,U,\varXi)
=\sum_{x\in \Xi+u}e( x|u^*)\cdot
  \psi(\mathfrak{A})(q^*,x)
  \cdot 
    \frac{\overline{N_M(x)}^k}{|N_M(x)|^{2s}},
\end{equation*}
for $Q=(N_{\overline{M}}(X)^k),q^*(X))$.
Just as for $F(s)$ (cf. \cite[Theorem 3.13]{FKW}), we also have absolute convergence of $\Psi(s,\mathfrak{A})$ on $\mathbf{G}_{n,k}$ for $\text{Re}(s)>1+\frac{k}{2}$. Observing that  \eqref{homogen-psi} and  \eqref{action-on-SE} imply $
\Psi(s,A\mathfrak{A})=A\Psi(s,\mathfrak{A})$ we have  that $\Psi(s):\Z[\mathrm{GL}_n(K)^n]\to \mathcal{D}_\C(\mathbf{G}_{n,k})$ is a homogenous cocycle that represents a cohomology class in 
$H^{n-1}\big(\mathrm{GL}_n(K),\, \mathcal{D}_\C(\mathbf{G}_{n,k})\,\big)$. 
Whenever there is analytic continuation at $s=0$ we will omit the $s$ from the notation.

Since $f(\sigma)$ does not change if $\sigma_j$ is replaced by $t\sigma_j$, $t \in \C^\times$, it follows that
\begin{equation*}\label{homogen-psi}
\Psi(s,t\,\mathfrak{A})=\Psi(s,\mathfrak{A}).
\end{equation*}

For $\mathfrak{A}=(A_1,\dots, A_n)\in \mathrm{GL}_n(K)^n$, let $\sigma(1)$ be the matrix whose $i$th column  is  the first column of $A_i$. Then, similarly as above, we see that
the sum
\[
\Phi(s,\a):=D(s,\sigma(1))
\]
also satisfies the cocycle property and thus $\Phi(s):\Z[\mathrm{GL}_n(K)^n]\to \mathcal{D}_\C(\mathbf{G}_{n,k})$ also represents a cohomology class in 
$H^{n-1}\big(\mathrm{GL}_n(K),\, \mathcal{D}_\C(\mathbf{G}_{n,k})\,\big)$. Furthermore, according to the analytic continuation properties of $F$ at $s=0$, we have that 
 $\Phi=\Phi(0):\Z[\mathrm{GL}_n(K)^n]\to \mathcal{D}_\C(\mathbf{H}_{n})$  represents a cohomology class in 
$H^{n-1}\big(\mathrm{GL}_n(K),\, \mathcal{D}_\C(\mathbf{H}_{n})\,\big)$, where $\mathbf{H}_{n}=\mathbf{G}_{n,k}$ for $n=1,2$ with $k\geq 0$, and $\mathbf{H}_{n}=\mathbf{G}_{n,0}$ for $n\geq 3$.

In \Cref{The smoothed Eisenstein-Kronecker cocycle} we will show that  $\Psi(s)$ and $\Phi(s)$ coincide after smoothing by a prime $\l$ of $K$ and, moreover, they coincide with 
the Eisenstein-Kronecker cocycle $\Phi_\l$.

\subsection{The partial Hecke $L$-function and its functional equation}
\label{sec:heckeL}
In order to describe  the parametrization of the partial $L$-function via the cocycle $\Psi(s)$,
we  define a slightly more general partial Hecke $L$-function in the following way.
Let $\alpha\mapsto \overline{\alpha}$ denote  the non-trivial automorphism of $K$ which 
we consider as extended to all of $\overline{K}$--- an algebraic closure of $K$. Let $F^*$ denote the image of 
$F$ under this automorphism. 
Consider the  pairing $e(\cdot,|\cdot)\,:\,F\times F^*\to \C^{\times}$ given by
\[
e(x|y)=e^{2\pi i \mathrm{Tr}_{F/\Q}( x\overline{y}) }.
\]
For a fractional ideal $\mathfrak{b}$ of $F$ let $\mathfrak{b}^*$
denote the fractional ideal of $F^*$ dual to $\b$
 with respect to this pairing, i.e., $\mathfrak{b}^*=\{y\in F^*: e(\mathfrak{b}|y)=1\}$.
 Note that 
$
\mathfrak{b}^*=(\overline{\mathfrak{b}\mathfrak{D}_F})^{-1},
$
where $\mathfrak{D}_F$ is the absolute different of $F$.

For an integral ideal $\mathfrak{g}$ of $F$, let 
 $r\in F$ and $r^*\in F^*$ such that 
\begin{equation}\label{condition-r-gamma}
\text{$\mathfrak{g}\, r \in \mathfrak{b}$
and 
$\overline{\mathfrak{g}}\,r^*\in \mathfrak{b}^*$},
\end{equation}
and also
\be\label{condition-gamma}
\text{ $e(\,\epsilon r|r^*)$ for all $\epsilon\in U_{\mathfrak{g}}$. }
\ee
These two conditions imply
\be\label{implicat-r-gamma}
\text{
 $e(\epsilon r|\b^*)=e( r|\b^*)$
 and   
 $e(\b|\overline{\epsilon} r^*)=e(\b| r^*)$ 
 for all $\epsilon\in U_{\mathfrak{g}}$. }
\ee

Suppose $\phi:F^{\times}\to \C$ is a map such that $\phi(xU_{\mathfrak{g}})=\phi(x)$ and $\phi(x)=\varphi(x)\lambda(x)$   for all $x\in F^{\times}$, where $\lambda=\lambda_{F,k,l}$ is as in \Cref{Main results} and $\varphi:F^{\times}\to \C$ is a bounded map.  
Then    we define the function
\be\label{partial-L-f-a,r,gamma,s}
\L_{\mathfrak{g}}(\phi,\mathfrak{b},r,r^*,s)=
\sum_{\alpha \in (\mathfrak{b}+r)/U_{\mathfrak{g}}}
   e(\,\alpha| r^*\,)
    \frac{\phi(\alpha)}{|N_{F/\Q}(\alpha)|^s}.
\ee
The extended Hecke character $\chi$ from \Cref{Main results}   induces the map $\phi(x)=\chi((x))$ for $x\in F^{\times}$, which we will denote by $\chi$ as well, so that we have
\begin{equation*}\label{relation-partial-partial}
L_{\mathfrak{g}}(\mathfrak{a},\chi,s)=
\frac{\chi(\aa)}{N_{F/\Q}(\aa)^s}
\,
\L_\mathfrak{g}(\chi,\aa^{-1}\mathfrak{g},1,0,s)
.
\end{equation*}
Furthermore, for $\mathfrak{g}=\mathfrak{f}$ note that
\begin{equation*}
\L_\mathfrak{f}(\chi,\aa^{-1}\f,1,0,s)
=
\L_\f(\lambda,\aa^{-1}\f,1,0,s).
\end{equation*}
In the case where $\mathfrak{a}$ is not an integral ideal, we simply take $r\in \aa^{-1}$
such that $r\equiv 1 \text{ mod}^*\,\f$, in which case we have
\be\label{relation-partial-partial-non-integral-a}
L_{\f}(\mathfrak{a},\chi,s)=
\frac{\chi(\aa)}{N_{F/\Q}(\aa)^s}
\,
\L_\f(\lambda,\aa^{-1}\f,r,0,s).
\ee

For the Hecke character $\chi:I(\f)\to \overline{\Q}^{\times}$, we define the dual character $\chi^*:I(\overline{\f})\to \overline{\Q}^{\times}$ on ideals of $F^*$ prime to $\overline{\f}$  by 
$\chi^*(\mathfrak{a})=\chi^{-1}(\overline{\aa})N_{F^*/\Q}(\overline{\aa})^{-1}$, so that 
$\chi^*((a))=\lambda^*(a)$ for $(a)\in P_{\overline{\f}}$, where $\lambda^*=\lambda^*_{F^*,l-1,k+1}:F^{*\times}\to K^{\times}$ is given  by
\[
\lambda^*(a)=\overline{N_{F^*/K}(a)}^{l-1}N_{F^*/K}(a)^{-(k+1)}=\frac{\lambda_{F,k,l}^{-1}(\overline{a})}{N_{F/\Q}(\overline{a})}.
\]
The assumption  \eqref{character-lambda} on $k$ and $l$ imply that  $\lambda^*(\epsilon)=1 $
for all units $\epsilon$ in the group
$
U_{F^*,\overline{\f}}^*.
$
For the
$L$-function $\mathcal{L}_\f(\lambda, \mathfrak{b},r,r^*,s)$ in \eqref{partial-L-f-a,r,gamma,s}, the function $\mathcal{L}_{\overline{\f}}(\lambda^*,\mathfrak{b}^*,-r^*,r,s)$ is well defined. Furthermore, these $L$-functions  satisfy the functional equation  ( cf. \cite{CS})
\be\label{final-fun-Eq-new-L}
\mathcal{Z}(\lambda,\mathfrak{b},r,r^*,s)=(-1)^{n(l-1+s)}W
\mathcal{Z}(\lambda^*,\mathfrak{b}^*,-r^*,r,-s),
\ee
where
\[
\mathcal{Z}(\lambda,\mathfrak{b},r,r^*,s)
:= \Gamma(l+s)^n (2\pi i)^{n(k-s)}
\mathcal{L}_\f(\lambda, \mathfrak{b},r,r^*,s),
\]
and
\begin{equation}\label{functional-factor-2}
W=W(\lambda, \mathfrak{b},r,r^*)=
\frac{ e(r|r^*) }
        {   i^n N_{F/\Q}(\mathfrak{b}) \sqrt{|\mathfrak{d}_F|}  } 
      =
      \frac{  e(r|r^*)  }
        {   (2i)^n\mathrm{Vol}(\mathfrak{b}) } .
     \end{equation}
     Note that
     $
W(\lambda,\mathfrak{b},r,r^*)    W(\lambda^*,\mathfrak{b}^*,-r^*,r)=(-1)^n. 
     $

\subsection{Parametrization of the Hecke $L$-function in terms of $\Psi(s)$} 
\label{Partial $L$-function in terms of $K$}
Let  $\lambda=\lambda_{k,l}$, $\mathfrak{b}$, $r\in F$, $r^*\in F^*$ as in the previous section.
We can express the partial $L$-function $\L_\f(\lambda,\b,r,r^*,s)$ in terms of $K$ as follows.
Let $\{m_i\}$ be a basis for $F/K$ and let $m:=(m_1,\dots, m_n)^t$. 
Denote by $\{m_i^*\}$ the basis of $F^*$ dual to $\{m_i\}$ 
 with respect to the pairing $F\times F^*\to K$ given by 
 $(x,y)\mapsto \mathrm{Tr}_{F/K}(x\overline{y})$, i.e.,
 $\mathrm{Tr}_{F/K}(m_i\overline{m_j^*})=\delta_{ij}$, where $\delta_{ij}$ is the Kronecker delta. 
  For the basis $\{m_i\}$ we let $M\in R_n$ denote matrix \eqref{defin-matrix-M},
  so that $M^*$ is the corresponding matrix of $\{m_i^*\}$. 
We have the bijective map
$
(m, m^*  ):K^{2n}\to F\times F^*$
given by
$(x,x^*)\mapsto (x,x^*)(m,m^*)=(xm, x^*m^*)$.

Let 
$\mathbf{G}_{n}(\lambda,\mathfrak{b},r,r^*)$ be the set of tuples $(Q,U,\varXi)\in \mathbf{G}_n$ for which there exists an $m$ such that
\begin{equation}\label{r and gamma}
\b\times \b^*=\varXi (m,m^*), 
\
(r,r^*)=U(m,m^*),\
Q=(N_{\overline{M}}(X)^k,N_{M^{-t}}(X)^{l-1}).
\end{equation}

We define the representation $\varrho=\varrho_M:\mathcal{O}_F^{\times}\to \mathrm{GL}_n(K)$ in the following way: For a unit $\eta\in \mathcal{O}_F^{\times}$, 
 let $\varrho(\eta)$ be the matrix 
in $\mathrm{GL}_n(K)$
representing multiplication-by-$\eta$ with respect to the basis $\{m_i\}$, i.e.,
$
(m_1\eta,\dots,m_n\eta)^t
=\varrho(\eta)m.$
More precisely,
\begin{equation}\label{matrix-rep-trace}
\varrho(\eta)=
\begin{pmatrix}
\mathrm{Tr}_{F/K}(m_1\eta\, \overline{m_1^*})&\dots 
&\mathrm{Tr}_{F/K}(m_1\eta\, \overline{m_n^*})\\
\vdots &\ddots & \vdots\\
\mathrm{Tr}_{F/K}(m_n\eta\, \overline{m_1^*})&\dots 
&\mathrm{Tr}_{F/K}(m_n\eta \,\overline{m_n^*})
\end{pmatrix}
\end{equation}
Note that 
$\varrho_{M^*}(\overline{\eta})=\varrho_{M}(\eta)^h$
is the corresponding matrix for
 multiplication-by-$\overline{\eta}$ in the basis $\{m_i^*\}$ of $F^*/K$, i.e.,
$
(
m_1^* \overline{\eta},\dots,
m_n^*\overline{\eta}
)^t
=\varrho_M(\eta)^h m^*.$

The  conditions     \eqref{condition-gamma} and  \eqref{implicat-r-gamma} are equivalent to
\begin{equation*}\label{condition-v}
e( xA|u^*)=e(x|u^*)
\quad \text{and}\quad
e( uA| u^*)=e(u|u^*)
\end{equation*}
for  all $x\in \Xi$ and all $A=\varrho(\eta)$ with $\eta\in U_{\mathfrak{f}}$.

Let $N_M$, $N_{M^*}$, $N_{M^{-t}}$ and $N_{\overline{M}}$ 
be as in \Cref{Notation} .
 Note that
\[
N_{F/K}(r)=N_M(u)
\quad \text{and}\quad
N_{F^*/K}(r^*)=N_{M^*}(u^*),
\]
and also
$
e^{2\pi i\mathrm{Tr}_{F/\Q}(r\overline{r^*})}=e(u|u^*).
$
Furthermore, we also have the following relationship $(\varrho(\eta)N_M)(x)=N_{F/K}(\eta)N_M(x)$ for any $\eta\in \mathcal{O}_F^{\times}$.
Thus we have
\begin{equation*}\label{Lf=LLambda}
\L_\f(\lambda,\mathfrak{b},r,r^*,s)=\mathcal{L}_\f(Q,U,\varXi,s),
\end{equation*}
where
\[
\mathcal{L}_\f(Q,U,\varXi,s):=\sum_{x\in (\Xi+u)/\mathfrak{U}_{\mathfrak{f}}} e( x|u^*)
\frac{\overline{N_M(x)}^k}{N_M(x)^l}\frac{1}{|N_M(x)|^{2s}},
\]
and $\mathfrak{U}_\f:=\varrho(U_\f)$.

For $(Q,U,\varXi)\in \mathbf{G}_n(\lambda,\mathfrak{b},r,r^*)$ as in \eqref{r and gamma}, we see that  $(Q^*,U^*,\varXi^*)\in  \mathbf{G}_n(\lambda^*,\mathfrak{b}^*,-r^*,r)$. Observe that
\[
Q^*
=
(N_{M^{-t}}(X)^{l-1},N_{\overline{M}}(X)^k)
=
(N_{\overline{M^*}}(X)^{l-1},N_{ (M^*)^{-t} }(X)^k),
\]
then
\be\label{-dual-L-terms-K}
\mathcal{L}_{\overline{\f}}(\lambda^*,\mathfrak{b}^*,-r^*,r,s)=
\mathcal{L}_{\overline{\f}}(Q^*,U^*,\varXi^*,s).
\ee

Let $V_\f=V_{F,\f}$ be the free part of the group
$U_\f^{(1)}:=\{\epsilon\in U_\f:N_{F/K}(\epsilon)=1\}$, i.e., the group generated by units of infinite order in $U_\f^{(1)}$.
Choose generators $\epsilon_{1},\dots, \epsilon_{n-1}$ of $V_\f$ and let $\mathfrak{V}_\f=\varrho(V_\f).$  For the regulator
\[
R_{F/K}=R(\epsilon_1,\dots, \epsilon_{n-1})=\det (2\log|\rho_i(\epsilon_j)|), \qquad 1\leq i,j\leq n-1,
\]
let $
\rho=(-1)^{n-1}\mathrm{sign}(R_{F/K})
$. Define the $(n-1)$-chain in $\Z[\mathfrak{V}_\f^n]$
\be\label{cycle-defin}
\mathfrak{E}=\mathfrak{E}_{V_\f,M}=\rho\sum_{\pi}\mathfrak{A}_{M,\pi},
\ee
where  $\pi$  runs over all the permutations of the set 
$\{1,\dots, n-1\}$ and
\[
\mathfrak{A}_{M,\pi}:=[\varrho_M(\epsilon_{\pi(1)})|\cdots |\varrho_M(\epsilon_{\pi(n-1)})];
\]
here we are using the notation $[A_1|\cdots |A_{n-1} ]=(1,A_1,A_1A_2,\dots, A_1\cdots A_{n-1})$. 

Following the exact same argument as in  \cite[Theorem 3.2.5]{FKW} (the key step being  c.f. \cite[Lemma 3.2.3]{FKW}), we have the parametrization
\begin{equation}\label{theorem-paramretrization-L}
[U_{\mathfrak{f}}:V_{\mathfrak{f}}]\,\Gamma(l)^n\,
\L_\f(\lambda,\mathfrak{b},r,r^*,s)=
\det(M)^{-1}\Psi(s,\mathfrak{E},Q,U,\varXi),
\end{equation}
for all  $(Q,U,\varXi)\in \mathbf{G}_n(\lambda,\mathfrak{b},r,r^*)$.

As a consequence of \Cref{theorem-paramretrization-L} and the above functional equations
we have the following parametrization of the $L$-function by the Eisenstein-Kronecker cocycle.
\begin{cor}\label{dual-L-func-parametrization-via cocycle}

The special value
\[
[U_{\mathfrak{f}}:V_{\mathfrak{f}}]\cdot
\Gamma(l)^n
\cdot
(2\pi i)^{nk}
\cdot
\mathcal{L}_\f(\lambda,\mathfrak{b},r,r^*,0)
\]
is equal to
\[
W\cdot
   \det(\overline{M}) 
       \cdot
         (-2\pi i)^{n(l-1)}
          \cdot
              \Psi  (\mathfrak{E}^*,Q^*,U^*,\varXi^*),
\]
where $W=W(\lambda, \mathfrak{b},r,r^*)$ is as in  \eqref{functional-factor-2} and $\mathfrak{E}^*$ be as in \eqref{cycle-defin} for $V_{F^*,\overline{\f}}:=\overline{V_{F,\f}}$ and $M^*$, i.e.,
\be\label{dual-cycle}
\mathfrak{E}^*=\mathfrak{E}_{V_{\overline{\f},F^*},M^*}=\rho\sum_{\pi}\mathfrak{A}_{M^*,\pi}
\ee
where 
$
\mathfrak{A}_{M^*,\pi}=\mathrm{sign}(\pi)
 \big[\,
   \varrho^h( \epsilon_{\pi(1)} )   \,|\,   \cdots   \,|\,   \varrho^h( \epsilon_{\pi(n-1)} )
    \,\big].$

\end{cor}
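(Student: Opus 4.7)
The plan is to combine the functional equation \eqref{final-fun-Eq-new-L} for the $L$-function with the parametrization \eqref{theorem-paramretrization-L} applied to the dual data. More precisely, evaluating the functional equation at $s=0$, using the definition of $\mathcal{Z}$ and the fact that $\lambda^{*}=\lambda_{F^{*},l-1,k+1}$ (so the roles of $(l,k)$ are replaced by $(k+1,l-1)$ on the dual side), yields
\[
\Gamma(l)^{n}(2\pi i)^{nk}\mathcal{L}_{\f}(\lambda,\mathfrak{b},r,r^{*},0)
=(-1)^{n(l-1)}W\,\Gamma(k+1)^{n}(2\pi i)^{n(l-1)}\mathcal{L}_{\overline{\f}}(\lambda^{*},\mathfrak{b}^{*},-r^{*},r,0).
\]
Multiplying by $[U_{\f}:V_{\f}]$ reduces the corollary to an identity for the dual partial $L$-function.

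Next, I would apply the parametrization \eqref{theorem-paramretrization-L} to the dual data $(\lambda^{*},\mathfrak{b}^{*},-r^{*},r)$, the basis $\{m_{i}^{*}\}$ (so the associated matrix is $M^{*}$), and the dual group of units $V_{F^{*},\overline{\f}}=\overline{V_{F,\f}}$. Using \eqref{-dual-L-terms-K} and the fact that $\mathfrak{E}^{*}$ in \eqref{dual-cycle} is precisely the cycle \eqref{cycle-defin} associated to this dual data, this gives
\[
[U_{F^{*},\overline{\f}}:V_{F^{*},\overline{\f}}]\,\Gamma(k+1)^{n}\mathcal{L}_{\overline{\f}}(\lambda^{*},\mathfrak{b}^{*},-r^{*},r,0)
=\det(M^{*})^{-1}\,\Psi(\mathfrak{E}^{*},Q^{*},U^{*},\varXi^{*}).
\]

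The two small bookkeeping points to verify are that $[U_{F^{*},\overline{\f}}:V_{F^{*},\overline{\f}}]=[U_{\f}:V_{\f}]$, which is immediate from the conjugation isomorphism $\epsilon\mapsto\overline{\epsilon}$ carrying $U_{\f}$ onto $U_{F^{*},\overline{\f}}$ and $V_{\f}$ onto $V_{F^{*},\overline{\f}}$; and that $\det(M^{*})^{-1}=\det(\overline{M})$, which follows from $M^{*}=M^{-h}=\overline{M}^{-t}$. Substituting the dual parametrization into the functional-equation identity, the factor $(-1)^{n(l-1)}$ combines with $(2\pi i)^{n(l-1)}$ to give $(-2\pi i)^{n(l-1)}$, producing exactly the right-hand side of the claim.

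The main obstacle is not conceptual but notational: one must check carefully that all the conjugation conventions line up, in particular that $(Q^{*},U^{*},\varXi^{*})$ belongs to $\mathbf{G}_{n}(\lambda^{*},\mathfrak{b}^{*},-r^{*},r)$ (as recorded in the lines preceding \eqref{-dual-L-terms-K}), so that the parametrization \eqref{theorem-paramretrization-L} is indeed applicable. Once this compatibility is in place, the proof reduces to a straightforward assembly of the three ingredients above.
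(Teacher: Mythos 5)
Your proposal is correct and follows exactly the paper's own argument: evaluate the functional equation \eqref{final-fun-Eq-new-L} at $s=0$, apply the parametrization \eqref{theorem-paramretrization-L} to the dual data $(\lambda^{*},\mathfrak{b}^{*},-r^{*},r)$ with the basis $\{m_i^*\}$ via \eqref{-dual-L-terms-K}, and use the index equality $[U_{F^{*},\overline{\f}}:V_{F^{*},\overline{\f}}]=[U_{\f}:V_{\f}]$ together with $\det(M^{*})^{-1}=\det(\overline{M})$. No substantive difference from the paper's proof.
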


\begin{proof} 
This parametrization is a consequence of    \eqref{final-fun-Eq-new-L}  and
 the identity
\[
[U_{F^*,\overline{\f}}:V_{F^*,\overline{\f}}]\,\Gamma(k+1)^n\,
\mathcal{L}_{\overline{\f}}(\lambda^*,\mathfrak{b}^*,-r^*,r,s)
=   \det(\overline{M})  \cdot \Psi(s,\mathfrak{E}^*,Q^*,U^*,\varXi^*);
\]
which follows \Cref{theorem-paramretrization-L}, bearing in mind  \eqref{-dual-L-terms-K}
and $[U_{F^*,\overline{\f}}:V_{F^*,\overline{\f}}]=[U_{F,\f}:V_{F,\f}]$, where $U_{F^*,\overline{\f}}:=\overline{U_{F,\f}}$. 

\end{proof}

\section{Arithmetic properties of  Eisenstein-Kronecker sums }
\label{Eisenstein-Kronecker numbers and properties}

In this section, we extend the integrality and congruence properties for Eisenstein numbers established in \cite[Chapter II, Proposition 3.3(iv) and §3.4, Equation (12)]{dS} to all Eisenstein-Kronecker numbers, without any restrictions on $k\geq 0$ and $l>0$. This is the content of  \Cref{$p$-adic properties of $E_{k,l}$}. 
We achieve this by deriving $p$-adic expansions of Eisenstein-Kronecker numbers
 (cf. \Cref{$p$-adic expansions of $E_{k,l}$}), which are obtained from the $p$-adic expansions of $E_1^0(u,0,L)$ given by Colmez-Schneps in \cite{CS}. Finally, we deduce corresponding properties for the distributions $E^n$
  and $D$ in Sections \ref{padic properties of En} and \ref{padic properties of D}, respectively.

\subsection{Complex periods and algebraicity of  Eisenstein-Kronecker sums}
\label{Complex periods}
Let $\mathfrak{f}_1$ be an integral ideal of $K$ and $\mathbf{a}$ a fractional ideal of $K$. We further assume that  the number $w_{\f_1}$ of roots of unity congruent to 1 mod $\f_1$ is equal to 1. 
According to 
 \cite[II, Lemma 1.4 (i) and (ii)]{dS} there exists a 
 grossencharacter $\varphi$ of $K$ of type $(1,0)$ and conductor $\f_1$ (i.e., $\varphi((a))=a$ for $a\in K^{\times}$ with $a\equiv 1 \mod{\f_1}$) and an elliptic curve $E$ over $K(\f_1)$,
 the ray class field of $K$ modulo $\f_1$, with complex multiplication by the ring of integers of $K$ such that $j(E)=j(\mathbf{a})$ and
 \[
\psi_{E/K(\f_1)}=\varphi \circ N_{K(\f_1)/K}, 
 \]
where $\psi_{E/K(\f_1)}$ is the \textit{grossencharacter of  type $A_0$}  to $E/K(\f_1)$ by the theory of Complex Multiplication. According to   \cite[Chap II \S1.4 ]{dS}, these assumptions  imply that
\[
\text{$K(\f_1)(E_{\mathrm{tors}})
$ is an abelian extension of $K$,}
\]
where $E_{\mathrm{tors}}$ is the torsion subgroup of $E(\C)$ and $K(\f_1)(E_{\mathrm{tors}})
$ is the field extension of  $K(\f_1)$ generated by the coordinates of the points in  $E_{\mathrm{tors}}$.

Let $L$ be a lattice of $\C$ and the Weierstrass isomorphism  $\xi: \C/L\xrightarrow{\sim} E(\C)$  is given by
$\xi(z,L)=(\wp(z,L),\wp'(z,L))$ and thus
\begin{equation}\label{Weierstrass model-E}
E:\ y^2=4x^3-g_2(L)x-g_3(L),\quad g_2(L), g_3(L)\in K(\f_1).
\end{equation}
Since $j(E)=j(\mathbf{a})$, then $L=\Omega_{\infty} \mathbf{a}$ for some $\Omega_{\infty}\in \C^{\times}.$ We also introduce the dual period 
$
\Omega_{\infty}^*:=-2i\mathrm{Vol}(\mathbf{a})\Omega_{\infty},
$
so that $L=\Omega_{\infty}^*\mathbf{a}^*$,
and the period
\[
\eta_{\infty}:=\frac{\pi}{\Omega_{\infty} \mathrm{Vol}(\mathbf{a})}
=-\frac{2\pi i}{\Omega_{\infty}^*}.
\]

For $u$ and $v$ in $K$ let $u'=\Omega_{\infty} u$ and $v'=\overline{\Omega}^{-1}_{\infty}v$. Then
by \eqref{homog-Eisenstein-k-l},
\begin{equation*}\label{ekl-ablian}
E_{k,l}(u',  v',L)
=
\frac{    E_{k,l}(u,v,\mathbf{a})    }
      {    \Omega^{l+k}_{\infty}        }
=\Gamma(l) \cdot
     \frac{   \eta_{\infty}^k  }
            {\Omega^{l}_{\infty}  }    
            \cdot       
  E_l^k(u,v,\mathbf{a}).
\end{equation*}
If $u\in K/\mathbf{a}$ and $v\in K/\mathbf{a}^*$, then this element belongs to 
$K^{\mathrm{ab}}$
according to Damerrell (cf. \cite[Corollary 2.11]{BK}) and so we have the following result.
\begin{prop}\label{Damerell}
For every $n\geq 0$, the distribution defined by
\[
\tilde{E}^n(Q,U,\varXi)=\frac{\eta_{\infty}^{\deg q}}{\Omega_{\infty}^{\deg q^*+n}}E^n(Q,U,\varXi)
\]
belongs to $\mathcal{D}_{K^{\mathrm{ab}}}(\mathbf{G}_n)$.
\end{prop}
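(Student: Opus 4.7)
The plan is to verify separately the three defining axioms of \Cref{defin-space-dist} for $\tilde{E}^n$ and to establish that its values lie in $K^{\mathrm{ab}}$. The key observation is that the rescaling factor $c(q,q^*):=\eta_{\infty}^{\deg q}/\Omega_{\infty}^{\deg q^*+n}$ depends only on the bidegree $(\deg q,\deg q^*)$ of $Q=(q,q^*)$, and this bidegree is preserved under every operation that appears in axioms (i)--(iii): the $U$-translations do not touch $Q$; the $K^{\times}$-action $Q\mapsto Qt=(\overline{t}^{-\deg q}q,\,t^{\deg q^*}q^*)$ only rescales $q$ and $q^*$ by nonzero constants without changing their degrees; and the change of lattice $\varXi\mapsto\varUpsilon$ again does not touch $Q$. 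Consequently the scalar $c(q,q^*)$ appears symmetrically on both sides of each defining identity, and all three axioms for $\tilde{E}^n$ are inherited directly from the corresponding axioms for $E^n\in\mathcal{D}_{\C}(\mathbf{G}_n)$.

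For the $K^{\mathrm{ab}}$-valuedness I apply axiom (iii) with the diagonal lattice $\varUpsilon:=\mathbf{a}^n\times(\mathbf{a}^*)^n\in\mathcal{W}_n$, obtaining
\[
\tilde{E}^n(Q,U,\varXi)=\frac{1}{[\mathbf{a}^n:\Xi\cap\mathbf{a}^n]}\sum_{V\in\varXi/\varXi\cap\varUpsilon}e(-u|v^*)\,\tilde{E}^n(Q,U+V,\varUpsilon).
\]
Since $[\mathbf{a}^n:\Xi\cap\mathbf{a}^n]$ is a positive rational integer, each character $e(-u|v^*)$ is a root of unity (hence in $K^{\mathrm{ab}}$), and the sum is finite, the problem reduces to showing $\tilde{E}^n(Q,U+V,\varUpsilon)\in K^{\mathrm{ab}}$. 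Using $K$-linearity in $Q$ I may restrict to the monomial case $Q=(X^{\mathbf{t}},X^{\mathbf{r}})$, where \Cref{product of distributions} and \eqref{defin-F-general} give the product decomposition
\[
\tilde{E}^n(X^{\mathbf{t}},X^{\mathbf{r}},U+V,\varUpsilon)=\prod_{i=1}^{n}\frac{\eta_{\infty}^{t_i}}{\Omega_{\infty}^{r_i+1}}\,\Gamma(r_i+1)\,E_{r_i+1}^{t_i}(u_i+v_i,\,u_i^*+v_i^*,\,\mathbf{a}).
\]

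By the identity displayed immediately before the proposition statement, each factor equals $E_{t_i,\,r_i+1}\!\bigl(\Omega_{\infty}(u_i+v_i),\,\overline{\Omega}^{-1}_{\infty}(u_i^*+v_i^*),\,L\bigr)$, which belongs to $K^{\mathrm{ab}}$ by Damerell's theorem (cf.\ \cite[Corollary 2.11]{BK}) whenever $u_i+v_i\in K/\mathbf{a}$ and $u_i^*+v_i^*\in K/\mathbf{a}^*$. For arbitrary values in $K$, the quasi-periodicities $E_l^k(u+\omega,v,\mathbf{a})=E_l^k(u,v,\mathbf{a})$ for $\omega\in\mathbf{a}$ and $E_l^k(u,v+\tau,\mathbf{a})=e(u|\tau)E_l^k(u,v,\mathbf{a})$ for $\tau\in\mathbf{a}^*$ reduce to this torsion case at the cost of an extra root-of-unity factor, which again lies in $K^{\mathrm{ab}}$. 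Combining these ingredients yields $\tilde{E}^n(Q,U,\varXi)\in K^{\mathrm{ab}}$.

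The only delicate point is the compatibility of the bidegree-dependent scalar $c(q,q^*)$ with the $K$-linearity requirement in the $Q$-component of \Cref{defin-space-dist}; this is automatic because $H$ is a graded $K$-vector space, so $K$-linearity holds on each bidegree component (on which $c$ is constant), and no further obstruction arises.
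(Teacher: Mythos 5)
Your proof is correct and follows the same route the paper takes: the paper gives no formal proof, relying exactly on the displayed identity $E_{k,l}(\Omega_{\infty}u,\overline{\Omega}_{\infty}^{-1}v,L)=\Gamma(l)\,\eta_{\infty}^{k}\Omega_{\infty}^{-l}E_{l}^{k}(u,v,\mathbf{a})$ together with Damerell's theorem, and you have simply made explicit the routine reductions (invariance of the bidegree-dependent scalar under the axioms, passage to the diagonal lattice via axiom (iii), and reduction to monomials) that the paper leaves implicit.
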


\begin{rem}\label{rescaling-periods-distri}
In general, given $\phi\in \mathcal{D}_\C(\mathbf{H}_n)$, for some $\mathbf{H}_n$ as in \Cref{The space of distributions}, we denote by $\tilde{\phi}$
 the distribution in $\mathcal{D}_\C(\mathbf{H_n})$
given by
\[
\tilde{\phi}(Q,U,\varXi)=
\frac{\eta_{\infty}^{\deg q}}
  {  \Omega_{\infty}^{\deg q^*+\,n}   }
 \phi(Q,U,\varXi). 
\]
\end{rem}

We finish this section proving  an identity relating the Eiesentein number $E_{t,r+1}(0,w,L)$ with  the numbers $E_{k,1}(0,w,L)$, $0\leq k \leq t+t$, which will be used later
in the proof of the congruence between Eisesntein-Kronecker numbers (cf. \Cref{power-series-(r+1)Ert+1}). To simplify the notation, we will omit the letter $L$.

\begin{prop}\label{polynomial-identity}
 There exists a (non-unique) polynomial $Q_{t,r}$ in $\Z[X_1,\dots, X_{t+r},Y_1,\cdots, Y_{t+r}]$
of degree $r+1$, isobaric of weight $r+t$; $X_{i}$ and $Y_i$ are assigned weight $i$,  such that
\[
(r+1)E_{t,r+1}(0,w)=Q_{t,r}(E_{0,1}(0,w),\cdots, E_{t+r,1}(0,w),
E_{0,1}(0,0),\dots, E_{t+r,1}(0,0)),
\]
where 
\[
Q_{t,r}=b_{t,r}(-X_1)^rX_{t}+(\text{terms in which $X_1$ appears of degree $<r$})
\]
and 
\[
b_{t,r}=\begin{cases}
1,  &\text{if $t=0$,}\\
(r+1),& \text{if $t>0$}. 
\end{cases}
\]
\end{prop}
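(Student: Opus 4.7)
The plan is to argue by induction on $r \geq 0$. For the base case $r = 0$, the identity reduces to $E_{t,1}(0,w,L) = Q_{t,0}$, and we take $Q_{t,0}$ to be the single variable (of degree $1$ and weight $t$) that evaluates to $E_{t,1}(0,w,L)$; its leading form $b_{t,0}(-X_1)^0 X_t = X_t$ matches $b_{t,0}=1$, and the degree and weight conditions are immediate.

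For the inductive step, assume the identity for all $r' < r$. The crux of the proof is a single algebraic recursion that expresses $(r+1)E_{t,r+1}(0,w,L)$ as an integral linear combination of products of $E_{a,b}(0,w,L)$ (with $b \leq r$) and $E_{c,1}(0,0,L)$. This is the Eisenstein-Kronecker analogue (i.e., twisted by the additive character $e(x|w)$) of the Colmez-Schneps identity \cite[Prop.~9]{CS} (cf.\ also \cite[II Lemma~3.1]{dS}), which was proved originally for Eisenstein numbers ($w=0$). It can be derived by Taylor-expanding a theta-function identity for the Kronecker theta $\Theta(z,w,L)$ in the variable $z$ around $z=0$ — equivalently, by differentiating a Weierstrass $\sigma$-function identity twisted by $e(x|w)$ — and collecting the coefficient of the appropriate power of $z$. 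Schematically, the recursion produces a dominant term of the form $-E_{0,1}(0,w,L)\cdot rE_{t,r}(0,w,L)$ together with a correction $R_{t,r}(w,L)$ whose summands each involve $E_{0,1}(0,w,L)$ to strictly smaller degree after applying the induction hypothesis.

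Applying the inductive hypothesis to the factor $rE_{t,r}(0,w,L) = Q_{t,r-1}$, whose leading term is $b_{t,r-1}(-X_1)^{r-1}X_t$, multiplication by $-X_1$ produces a leading term $b_{t,r-1}(-X_1)^r X_t$, while the correction contributes only to strictly smaller $X_1$-degree. The polynomial degree thus rises from $r$ to $r+1$, the isobaric weight from $r-1+t$ to $r+t$, and integrality of the coefficients is preserved since the recursion has integer coefficients. A direct count of the accumulated top $X_1$-coefficient — tracking also any additional top-degree contribution from terms like $(-E_{0,1})^{r}E_{t,1}$ already appearing in the recursion — yields $b_{t,r}=r+1$ when $t>0$ and $b_{t,r}=1$ when $t=0$, as claimed.

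The main obstacle is establishing the precise form of the integral recursion itself. Extending the Colmez-Schneps identity from $w=0$ to general $w$ and controlling exactly which summands contribute at the top $X_1$-degree requires a careful Taylor-coefficient analysis of a twisted theta (or $\sigma$-function) identity. Once this core recursion is in hand, the induction and the leading-term bookkeeping go through routinely.
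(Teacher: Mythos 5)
There is a genuine gap at the heart of your argument: the entire induction rests on a ``single algebraic recursion'' expressing $(r+1)E_{t,r+1}(0,w)$ in terms of lower Eisenstein--Kronecker numbers, but you never establish this recursion --- you only describe it schematically and explicitly defer its proof (``the main obstacle is establishing the precise form of the integral recursion itself''). The identity of Colmez--Schneps \cite[Prop.~9]{CS} is proved for Eisenstein numbers, i.e.\ for $E_{k,l}(z,0)$ with vanishing \emph{second} argument, and it is a recursion in the \emph{first} index $k$ with $l=1,2$; what you need is a character-twisted version for $E_{t,r+1}(0,w)$ with general $w$ in the second slot, recursing in the \emph{second} index for arbitrary fixed $t$. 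That is not the same statement, and asserting that it ``can be derived by Taylor-expanding a twisted theta identity'' is precisely the content that must be supplied. Without it, the leading-coefficient bookkeeping (in particular the jump from $b_{t,r-1}=r$ to $b_{t,r}=r+1$, which requires an \emph{exact} extra top-degree contribution of $(-X_1)^rX_t$ from the unspecified correction term) cannot be verified.

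The paper sidesteps this difficulty entirely, and you should too. Applying the functional equation \eqref{symmetry-E-k-l} with $z=0$ gives $E_{t,r+1}(0,w,L)=E_{r,t+1}(-2i\mathrm{Vol}(L)w,0,L)$, so the twisted problem in the second variable becomes an untwisted problem ($w=0$) in the first variable. One then proves the dual statement: $(k+1)E_{k,l}(z,0)$ is a polynomial $P_{k,l}$ in the $E_{0,j}(z,0)$ and $E_{0,j}(0,0)$ with leading term $a_{k,l}(-X_1)^kX_l$. For $l=1$ this follows by induction on $k$ from the recursion appearing in the proof of \cite[Prop.~9]{CS} verbatim, namely $P_{k+1,1}=-X_1P_{k,1}+P_{k,2}-Y_{k+2}+\sum_i\binom{k+1}{i+1}P_{i,1}Y_{k-i+1}$; for $l>1$ one simply applies $\partial_{-z}^{\,l-1}$ to the $l=1$ identity, which is also where the factor $a_{k,l}=k+1$ (i.e.\ $b_{t,r}=r+1$ for $t>0$) comes from. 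No new twisted identity is required. If you wish to keep your direct route, you must actually state and prove the twisted recursion; as written, the proposal assumes its key lemma.
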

\begin{proof}
By the functional equation \eqref{symmetry-E-k-l},  we can equivalently show that there exists a (non-unique) polynomial $P_{k,l}$ in $\Z[X_1,\dots, X_{k+l},Y_1,\cdots, Y_{k+l}]$
of degree $k+1$, isobaric of weight $k+l$; $X_{i}$ and $Y_i$ are assigned weight $i$,  such that
\[
(k+1)E_{k,l}(z,0)=P_{k,l}(E_{0,1}(z,0),\cdots, E_{0,k+l}(z,0),
E_{0,1}(0,0),\dots, E_{0,k+l}(0,0)).
\]
Furthermore,
\[
P_{k,l}=a_{k,l}(- X_1)^{k}X_l+ (\text{terms in which $X_1$ appears of degree $<k$}),
\]
where 
\[
a_{k,l}=\begin{cases}
1,  &\text{if $l=1$,}\\
(k+1),& \text{if $l>1$}. 
\end{cases}
\]
In order to do this, we will tweak the argument of \cite[Proposition 9]{CS}. First, observe that it is enough to prove this for $l=1$, since for $l>1$ we simply take the derivative   $\partial_{-z}^{l-1}$ of
\[
(k+1)E_{k,1}(z,0)=P_{k,1}(E_{0,1}(z,0),\cdots, E_{0,k+1}(z,0),
E_{0,1}(0,0),\dots, E_{0,k+1}(0,0)).
\]
We will prove this identity by induction on $k$. The result is trivial for $k=0$. Suppose the result is true for all $0\leq j \leq k$ ( and all $l>0$).  From the proof of Proposition 9 of \cite{CS} we have
\begin{align*}
(k+2)E_{k+1,1}(z,0)=-(k+1)&E_{0,1}(z,0)E_{k,1}(z,0)+(k+1)E_{k,2}(z,0)-E_{0,k+2}(0,0)+\\
& \sum_{i=0}^{k-1}  {k+1 \choose i+1}(i+1)E_{i,1}(z,0)E_{0,k-i+1}(0,0),
\end{align*}
that is,
\[
P_{k+1,1}=-X_1P_{k,1}+P_{k,2}-Y_{k+2}+\sum_{i=0}^{k-1}{k+1 \choose i+1}P_{i,1}Y_{k-i+1},
\]
which clearly satisfies all the required conditions.
\end{proof}

\subsection{$\p$-adic periods}\label{sec-p-adic-periods}
Let $E(\C)\simeq \C/L$ be the elliptic curved introduced in \eqref{Weierstrass model-E}, with $L=\Omega_{\infty} \mathbf{a}$.
Let $p>3$ be a rational prime  which splits in $K/\Q$ as $\p\overline{\p}$
 and such that 
  \begin{equation}\label{condition for p-integral}
\text{ $\mathfrak{p}$  is relatively prime to  $\f_1$ and 
$\mathbf{a}$ is prime to $p$.}
 \end{equation}  
 Since $\f_1$ is the conductor  of the grossencharacter $\psi_{E/K(\f_1)}$, then the  fact that  $\p$ is prime to $\f_1$ implies that \eqref{Weierstrass model-E} has good reduction at all the primes above $\mathfrak{p}$.
 
 We further assume that the Weierstrass model of $E$ is over $\mathcal{O}_{K(\f_1)}$. 
Let $\mathfrak{P}$ be a prime ideal of $K(\f_1)$ above $\mathfrak{p}$ and let $\mathcal{O}_{\mathfrak{P}}$ be the completion of $\mathcal{O}_{K(\f_1)}$ at $\mathfrak{P}$. Let $t=-2x/y=-2 \wp(z,L)/\wp'(z,L)$ be the parameter of the formal group $\widehat{E}$ associated to $E$ (cf. \cite{dS} Chapter II, 1.10). Since $p$ splits in $K$ then $\widehat{E}$ is  furthermore a  Lubin-Tate formal group, i.e., of \textit{height} 1. Let $l_E(t)$ be the formal logarithm associated to $E$.

The formal groups $\widehat{E}$ and $\mathbb{G}_m$ are isomorphic over $W$, the ring of integers of the completion of the maximal unramified extension of $K(\f_1)_{\mathfrak{P}}$, that is
\[
\iota: \widehat{E}\to \mathbb{G}_m,
\]
where $\iota$ is given by a power series
$
\iota(X)=\eta_{\mathfrak{p}}X+\cdots\in W[\![X]\!]
$
with $\eta_{\mathfrak{p}}\in W^{\times}$. More specifically, 
$\iota(X)=\mathrm{exp}(\eta_{\mathfrak{p}\,}l_E(X))-1$
with inverse given by
\[
\iota^{-1}(X)=l_E^{-1}(\eta_{\mathfrak{p}}^{-1}\log(X+1)),
\]
where $\log$ is the usual logarithm.

Fix an embedding of $\overline{K}$ into $\C_p$. Denote by $\widehat{\mathcal{O}}$ the ring of integers of $\C_p$.
Let
$\Omega_{\mathfrak{p}}=\Omega_{\p,\mathbf{a}}, \Omega_{\p}^*=\Omega_{\p,\mathbf{a}}^*\in \widehat{\mathcal{O}}$ such that
\[
\frac{1}{\Omega_{\p}}=
-2i \mathrm{Vol}(\mathbf{a})\eta_{\p}
\quad
\text{and} 
\quad
\Omega_{\p}^*=\frac{1}{\eta_\p}=-2i\mathrm{Vol}(\mathbf{a})\,\Omega_\p.
\]
Then $\overline{K}(\eta_{\infty})$ is isomorphic to $\overline{K}(\eta_{\mathfrak{p}})$ (cf. \cite[ III \S 2]{CS}).

Assume that $1\in \mathbf{a}$,  so that $\tilde{1}=(-2i\mathrm{Vol}(\mathbf{a}))^{-1}\in \mathbf{a}^*$,
then for $\theta \in \p^{-\infty}\mathbf{a}/\mathbf{a}$ and $\tau \in \mathfrak{p}^{-\infty}\mathbf{a}^*/\mathbf{a}^*$ we have that
$
e\left(\theta \,|\, \tilde{1} \right)
$ and
$
e(1|\tau)$
are both $p^{\infty}$th roots of 1. Let $t(\theta)$ be the $\mathfrak{p}^{\infty}$-division point of $\widehat{E}$ such that
$
\iota\big(\,t(\theta)\,\big)\,=\,e\left(\theta \,|\, \tilde{1} \right)-1,
$
that is,
\begin{equation}\label{p-infty-division point formal}
t(\theta)=\iota^{-1} \left( \,
e\left(\theta \,|\, \tilde{1} \right)-1
 \,   \right).
\end{equation}

\subsection{$\p$-adic expansions  of Eisenstein-Kronecker series}
\label{$p$-adic expansions of $E_{k,l}$}
Let $\mathbf{a}$, $\Omega_{\infty}$, $\eta_{\infty}$ and $\p$ as above.
In this section we will deduce the $\p$-integrality of 
$E_1^0(u,v,\mathbf{a})/\Omega_{\infty}$  
via $\p$-adic expansions of
 Eisenstein-Kronecker numbers (cf. \Cref{p-integrality-Et-1-uv}).  
We begin by finding $\p$-adic expansions of
\[
\frac{\eta_{\infty}^t}{\Omega_\infty}E_{1}^t(0,w, \mathbf{a})
\quad \text{ for $t\geq 0$.}
\]

Let $\pi\in \mathcal{O}$ be a generator of $\mathfrak{p}^h$, where $h$ is the class number of $K$.   Let  $\widehat{\mathcal{O}}$ be the ring of integers of $\C_p$.
 Throughout this section we assume that  $1\in  \mathbf{a}$.

\begin{prop}\label{congruence-Colmez}
Given $w\in K- \p^{-\infty} \mathbf{a}^*$,  there exist power series $G_t(X)=G_{t,w }(X)\in \widehat{\mathcal{O}}[\![X]\!]$ for all $t\geq 0$, such that
\[
\frac{1}{\Omega_{\infty}}E_{1}^0(0,w+\tau, \mathbf{a})
=
-\frac{1}{\Omega_{\p}}\,\overline{w}  
 +G_{0}\big(\, e(1|\tau)-1\,\big)
\]
and
\[
\frac{  \eta_{\infty}^t  }
      {    \Omega_{\infty}      }
        E_{1}^t(0,w+\tau, \mathbf{a})
        =
G_{t}\big(\,e(1|\tau)-1\,\big) \quad (t>0)
\]
for all 
 $\tau \in \mathfrak{p}^{-\infty} \mathbf{a}^*/ \mathbf{a}^*$.
\end{prop}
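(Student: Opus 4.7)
I would prove the proposition by reducing, via the functional equation \eqref{symmetry-E-k-l}, to the $p$-adic expansion of the classical Eisenstein numbers $E_j^0$ established in \cite{CS}, and then converting coordinates from the formal-group parameter $\iota(t(u'))$ to the proposition's variable $X = e(1|\tau)-1$.

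\textbf{Step 1 (Functional equation).} Setting $k=t$, $l=1$, $z=0$, $L=\mathbf{a}$ in \eqref{symmetry-E-k-l} gives
\[
E_{t,1}(0,w+\tau,\mathbf{a}) \;=\; E_{0,t+1}(u+u',0,\mathbf{a}) \;=\; t!\,E_{t+1}^0(u+u',0,\mathbf{a}),
\]
where $u = -2i\mathrm{Vol}(\mathbf{a})w \in K\setminus \p^{-\infty}\mathbf{a}$ and $u' = -2i\mathrm{Vol}(\mathbf{a})\tau \in \p^{-\infty}\mathbf{a}/\mathbf{a}$. Using $E_1^t = A(\mathbf{a})^t E_{t,1}$ and $\eta_\infty = 1/(A(\mathbf{a})\Omega_\infty)$, this rearranges to
\[
\frac{\eta_\infty^t}{\Omega_\infty}\,E_1^t(0,w+\tau,\mathbf{a}) \;=\; \frac{t!\,E_{t+1}^0(u+u',0,\mathbf{a})}{\Omega_\infty^{t+1}}.
\]
Thus the problem is translated from the $w$-slot (with torsion shift $\tau$) to the $z$-slot (with torsion shift $u'$), which is the natural setting for the Colmez-Schneps machinery.

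\textbf{Step 2 (Colmez-Schneps expansion).} For each $j \geq 1$, \cite{CS} gives an expansion
\[
\frac{E_j^0(u+u',0,\mathbf{a})}{\Omega_\infty^j} \;=\; \delta_{j,1}\,\frac{\overline{u+u'}}{\Omega_\p^*} \;+\; H_{j,u}\bigl(\iota(t(u'))\bigr), \quad H_{j,u} \in \widehat{\O}[[X]],
\]
where the linear correction appears only for $j=1$ and is the Coates-Wiles formula. The extension from $j=1$ to $j\geq 2$ proceeds inductively via the CS Proposition~9 identity that also underlies the proof of \Cref{polynomial-identity}; the requisite denominators $(k{+}1)$ in the recursion are $\p$-units under the running hypothesis $p>3$, and the constants $E_i^0(0,0,\mathbf{a})/\Omega_\infty^i$ are $\p$-integral by CM theory.

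\textbf{Step 3 (Coordinate conversion and linear term).} Since $\tilde 1 = (-2i\mathrm{Vol}(\mathbf{a}))^{-1}$, a direct computation yields $e(u'|\tilde 1) = e(1|\tau)^{-1}$, so $\iota(t(u')) = (X{+}1)^{-1} - 1$ lies in $X \cdot \widehat{\O}[[X]]^\times$; consequently every power series in $\iota(t(u'))$ with $\widehat{\O}$-coefficients is equivalently one in $X$ with $\widehat{\O}$-coefficients. For $t \geq 1$ (so $j = t{+}1 \geq 2$), Steps 1-2 already produce $G_t \in \widehat{\O}[[X]]$. For $t=0$, after splitting $\overline{u+u'}/\Omega_\p^* = \overline u/\Omega_\p^* + \overline{u'}/\Omega_\p^*$, the period identity $\overline{(-2i\mathrm{Vol}(\mathbf{a}))} = 2i\mathrm{Vol}(\mathbf{a})$ together with $\Omega_\p^* = -2i\mathrm{Vol}(\mathbf{a})\Omega_\p$ identifies the $\overline u$-term with $-\overline w/\Omega_\p$ (matching the proposition), while $\overline{u'}/\Omega_\p^*$ — the value at $t(u')$ of the formal logarithm of the conjugate Lubin-Tate group — lies in $X\,\widehat{\O}[[X]]$ and is absorbed into $G_0$.

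\textbf{Main obstacle.} The heart of the matter is Step 2: the formal-group theoretic $p$-adic expansion of $E_j^0(u+u',0,\mathbf{a})/\Omega_\infty^j$ as $u'$ traverses $\p^{-\infty}\mathbf{a}/\mathbf{a}$. This rests on the Lubin-Tate structure of $\widehat E$ over $W$, the identification of Eisenstein numbers with derivatives of $\log\sigma$, and the translation of the expansion from torsion to generic $u \in K\setminus \p^{-\infty}\mathbf{a}$ via the shift-invariance of the formal parameter. Once these tools are in place, the remaining sign and period bookkeeping is routine.
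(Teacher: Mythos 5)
Your proposal follows essentially the same route as the paper's proof: reduce via the functional equation \eqref{symmetry-E-k-l} to the Eisenstein numbers $E_{t+1}^0(u+u',0,\mathbf{a})$, invoke the Colmez--Schneps $\p$-adic expansions in the formal-group parameter, and convert to the variable $e(1|\tau)-1$ via $\iota$ (the paper merely performs these steps in the opposite order, establishing the $z$-slot expansions first and applying the functional equation last, which is immaterial; your observation that $\iota(t(u'))=(X+1)^{-1}-1\in X\,\widehat{\O}[[X]]^{\times}$ is in fact a more careful account of the substitution than the paper's). One correction to your Step 2: for $j\geq 2$ the paper takes the expansion of $E_{j}^0(\alpha+\theta,0,\mathbf{a})/\Omega_\infty^{j}$ directly from \cite[Prop.~11]{CS}, which differentiates the $j=1$ generating series with respect to the formal parameter, not from the Proposition~9 recursion --- that recursion expresses the $E_{k,1}$'s in terms of the $E_{0,j}$'s and hence presupposes, rather than produces, the expansions you need; since you also cite \cite{CS} for the result itself (where it does appear), this is a misattribution of mechanism rather than a gap in the argument.
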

\begin{proof}
We will start by showing that
for every $\alpha\in K- \mathfrak{p}^{-\infty} \mathbf{a}$ and every $t\geq 0$, there exist power series $H_{t,\alpha}(X)\in \widehat{\mathcal{O}}[\![X]\!]$ such that
for every $\theta \in \mathfrak{p}^{-\infty} \mathbf{a}/ \mathbf{a}$
\begin{equation}\label{power series E01}
\frac{1}{\Omega_{\infty}}
E_{1}^0(\alpha +\theta,0, \mathbf{a})
=- \eta_{\p}\,\overline{\alpha} +H_{0,\alpha }\big(\,e(\theta\,|\,\tilde{1})-1\,\big)
\end{equation}
and for $t>0$
\begin{equation}\label{power series E0t1}
\frac{1}{\Omega_{\infty}^{t+1}}
E_{t+1}^0(\alpha +\theta,0, \mathbf{a})
= H_{t,\alpha }\big(\,e(\theta\,|\,\tilde{1})-1\,\big).
\end{equation}

According to \cite[Prop. 10]{CS} there exists a power series
\[
G_{1}(\alpha ,t)=-\eta_{\p}\,\overline{\alpha }+\sum_{n=0}^{\infty}b_n(\alpha )t^n,
\]
where the $b_n(\alpha )$ are in $\widehat{\mathcal{O}}$, such that
for all $\theta \in \mathfrak{p}^{-\infty} \mathbf{a}/ \mathbf{a}$
\[
\frac{1}
      {    \Omega_{\infty}  }
      E_{1}^0(\alpha +\theta,0, \mathbf{a})
=E_{0,1}(\Omega_{\infty}(\alpha+\theta ),0 , L)
=
G_{1}\big(\,\alpha \,, t(\theta)\,\big),
\]
where the $t(\theta)$ is given by   \eqref{p-infty-division point formal}. Thus, if we let $H_{0,\alpha }(X):=
\sum_{n\geq 0} b_n(\alpha )(\iota^{-1}(X))^n$, \eqref{power series E01} follows.

As for \eqref{power series E0t1} ( i.e., the case $t>0$), it follows from \cite[Prop. 11]{CS} and its proof. 

From   \eqref{power series E01}  and \eqref{power series E0t1},  the 
main claim follows now from
\begin{align*}
\frac{\eta_{\infty}^t}{\Omega_{\infty}}
E_{1}^t(0,w+\tau, \mathbf{a})
& =
\frac{1}{\Omega_{\infty}^{t+1}} 
E_{t+1}^0\big(\,-2i\mathrm{Vol}( \mathbf{a})(w+\tau),0, \mathbf{a}\,\big) ,   
\end{align*}
which is a consequence of \eqref{symmetry-E-k-l}, and also by noticing that
\[
e\big(\,-2i\mathrm{Vol}( \mathbf{a}) \tau \,|\,\tilde{1}\,\big)=e(-\tau|1)=e(1|\tau).
\]
\end{proof}

Let $u,v\in K$  such that
  $u\in K- \mathfrak{p}^{-\infty}  \mathbf{a} $ 
and $v\in K- \p^{-\infty}   \mathbf{a}^*$. We will now use the above result to obtain $\p$-adic expansions for 
\[
\frac{\eta_{\infty}^t}
     {\Omega_{\infty}}
     E_{1}^t(u,v, \mathbf{a})
\]
for $t\geq 0$. Let  $f=\overline{\pi}^{ \mathtt{M}}\pi^{ \mathtt{S}}d$ in  $\mathcal{O}- \{0\}$, with $ \mathtt{M}, \mathtt{S}\geq 0$ and $d$ relatively prime to $\p\overline{\p}$,  such that
$fu\in  \mathbf{a}$.  Then for all $t,r\geq 0$
\begin{equation}\label{expansion-Er,t}
E_{r+1}^t(u,v,  \mathbf{a}  )=
\frac{f^r}{ \overline{f}^{t+1}}
\sum_{\beta, \gamma,\tau}
\,
e(-fu|\beta+\gamma+\tau)
\cdot
E_{r+1}^t(0,w+\tau,   \mathbf{a}   ),
\end{equation}
where the indices in this sum run though
\begin{equation}\label{running-beta-gamma-tau}
 \beta\in \frac{\overline{d}^{-1} \mathbf{a}^*}{ \mathbf{a}^*}
 ,\quad
 \gamma\in \frac{\overline{\pi}^{- \mathtt{S} } \mathbf{a}^*}{ \mathbf{a}^*},
 \quad \text{and}\quad 
 \tau\in \frac{\pi^{-  \mathtt{M} } \mathbf{a}^*}{ \mathbf{a}^*},
\end{equation}
and
\begin{equation}\label{defin-w-dist}
w=\frac{v}{\overline{f}}+\beta+\gamma.
\end{equation}
Note that $w\in K- \p^{-\infty} \mathbf{a}^*$ since $v\in K- \p^{-\infty} \mathbf{a}^*$. 

\begin{lem}\label{power series-integrality-E}
Let $u,v\in K$  such that
  $u\in K- \mathfrak{p}^{-\infty}  \mathbf{a} $ 
and $v\in K- \p^{-\infty}   \mathbf{a}^*$, and $f$ as above. Then
for every $t\geq 0$
there exists a power series   $H_t(X)=H_{t,u,v}(X)\in \widehat{\mathcal{O}}[\![X]\!]$ such that
\[
\pi^{ \mathtt{M}t}
\cdot
\Gamma(r+1) 
\frac{\eta_{\infty}^t}{\Omega_{\infty}}
E_{1}^t(u,v,  \mathbf{a})=
\frac{1}{ \pi^{ \mathtt{M}}}
\sum_{\tau\in \pi^{- \mathtt{M}} \mathbf{a}^*/ \mathbf{a}^*}
e(-fu|\tau)
\cdot
H_{t}(e(1|\tau)-1).
\]

\end{lem}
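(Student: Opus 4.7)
The strategy is to combine the classical finite expansion \eqref{expansion-Er,t} specialized at $r=0$ with the $\p$-adic power-series expansions from Proposition \ref{congruence-Colmez}, and then rearrange so that the $\tau$-dependence is isolated in a single power series with $\widehat{\mathcal{O}}$-coefficients.

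First I would start from \eqref{expansion-Er,t} with $r=0$ and multiply through by $\eta_\infty^{t}/\Omega_\infty$, obtaining
\[
\frac{\eta_\infty^{t}}{\Omega_\infty}\,E_1^t(u,v,\mathbf{a})
=\frac{1}{\overline{f}^{\,t+1}}\sum_{\beta,\gamma,\tau} e(-fu|\beta+\gamma+\tau)\cdot\frac{\eta_\infty^{t}}{\Omega_\infty}\,E_1^t(0,w+\tau,\mathbf{a}),
\]
with $\beta,\gamma,\tau$ and $w$ as in \eqref{running-beta-gamma-tau} and \eqref{defin-w-dist}. Next, for each fixed pair $(\beta,\gamma)$, I would invoke Proposition \ref{congruence-Colmez} to express the inner factor as the evaluation at $e(1|\tau)-1$ of a power series in $\widehat{\mathcal{O}}[\![X]\!]$. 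For $t>0$ this is immediate: we get $G_{t,w}(e(1|\tau)-1)$. For $t=0$ the proposition produces an additional term $-\overline{w}/\Omega_\p$ which is constant in $\tau$, so I would absorb it into the constant coefficient by defining $\widetilde G_{0,w}(X):=-\overline{w}/\Omega_\p+G_{0,w}(X)$, which still lies in $\widehat{\mathcal{O}}[\![X]\!]$ (using that $\Omega_\p\in W^\times$ and that the choice of $f=\overline{\pi}^{\mathtt M}\pi^{\mathtt S}d$ together with the hypotheses on $u$, $v$ forces $\overline{w}$ to be $\p$-integral).

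Then I would swap the order of summation and define
\[
H_t(X):=\frac{\pi^{\mathtt M(t+1)}}{\overline{f}^{\,t+1}}\sum_{\beta,\gamma} e(-fu|\beta+\gamma)\,\widetilde G_{t,w}(X).
\]
A direct computation using $\overline{f}=\pi^{\mathtt M}\overline{\pi}^{\mathtt S}\overline{d}$ gives
\[
\frac{\pi^{\mathtt M(t+1)}}{\overline{f}^{\,t+1}}=\frac{1}{\overline{\pi}^{\,\mathtt S(t+1)}\overline{d}^{\,t+1}},
\]
which is a $\p$-adic unit because $\overline{\pi}$ generates $\overline{\p}^h$ (hence lies in $\widehat{\mathcal{O}}^{\times}$ under the fixed embedding $\overline K\hookrightarrow\C_p$ at $\p$) and $\overline{d}$ is coprime to $p$. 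Since the roots of unity $e(-fu|\beta+\gamma)$ are also $\p$-adic units and the sum is finite, $H_t(X)\in\widehat{\mathcal{O}}[\![X]\!]$. Multiplying the identity of the first step by $\pi^{\mathtt M t}$ and substituting produces exactly the asserted formula, with the outer factor $1/\pi^{\mathtt M}$ arising as $\pi^{\mathtt M t}/\pi^{\mathtt M(t+1)}$.

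The main obstacle is the $t=0$ case, where Proposition \ref{congruence-Colmez} yields the extra non-analytic piece $-\overline{w}/\Omega_\p$ on top of the power-series contribution. The verification that $\overline{w}/\Omega_\p$ is $\p$-integral (so that $\widetilde G_{0,w}\in\widehat{\mathcal{O}}[\![X]\!]$) amounts to unwinding the $\p$-adic valuations of $\overline{v}/f$, $\overline{\beta}$ and $\overline{\gamma}$ using that $\mathbf{a}$ and $d$ are coprime to $p$ and that $\mathtt M,\mathtt S$ are chosen exactly to clear the $\overline{\p}$-denominator of $u$ via $fu\in\mathbf{a}$. Once that check is carried out, the rest of the proof is just bookkeeping of $\p$-adic valuations in the prefactor and a rearrangement of sums.
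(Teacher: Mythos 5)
Your overall strategy coincides with the paper's: expand via \eqref{expansion-Er,t} at $r=0$, feed in \Cref{congruence-Colmez}, and package the $(\beta,\gamma)$-sum into $H_t$. The prefactor computation $\pi^{\mathtt{M}(t+1)}/\overline{f}^{\,t+1}=1/(\overline{\pi}^{\mathtt{S}(t+1)}\overline{d}^{\,t+1})$ and the observation that this is a $\p$-unit are correct, and the case $t>0$ goes through exactly as in the paper.

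The gap is in your treatment of the constant term at $t=0$. You claim that $\overline{w}/\Omega_\p$ is $\p$-integral, so that $\widetilde G_{0,w}(X)=-\overline{w}/\Omega_\p+G_{0,w}(X)$ lies in $\widehat{\mathcal{O}}[\![X]\!]$. This is false in general: writing $w=v\overline{f}^{-1}+\beta+\gamma$, the conjugates $\overline{v}/f$ and $\overline{\gamma}$ both carry $\pi^{\mathtt{S}}$ in the denominator (recall $f=\overline{\pi}^{\mathtt{M}}\pi^{\mathtt{S}}d$ and $\gamma\in\overline{\pi}^{-\mathtt{S}}\mathbf{a}^*/\mathbf{a}^*$, so $\overline{\gamma}\in\pi^{-\mathtt{S}}\overline{\mathbf{a}^*}$), and moreover the hypothesis $v\in K-\p^{-\infty}\mathbf{a}^*$ only restricts the $\p$-denominator of $v$, hence the $\overline{\p}$-denominator of $\overline{v}$ --- it gives no lower bound on $v_\p(\overline{v})$. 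Since $\mathtt{M},\mathtt{S},d$ are determined by $u$ alone, nothing in the hypotheses forces $\overline{v}/f+\overline{\gamma}$ to be $\p$-integral; only the piece $\overline{\beta}\in d^{-1}\overline{\mathbf{a}^*}$ is. So as defined your $H_0$ need not lie in $\widehat{\mathcal{O}}[\![X]\!]$, which is precisely the content of the lemma (it is what makes \Cref{p-integrality-Et-1-uv} work).

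The correct way around this, and what the paper does, is not to absorb the non-integral part of the constant but to show it contributes nothing: the quantity $b_0=-\Omega_\p^{-1}\overline{y}$ with $y=v\overline{f}^{-1}+\gamma$ is independent of $\beta$ and $\tau$, and the character sum $\sum_{\beta,\tau}e(-fu|\beta+\tau)$ vanishes because $fu\notin\overline{\pi}^{\mathtt{M}}d\,\mathbf{a}$ (this is where the hypothesis $u\notin\p^{-\infty}\mathbf{a}$ enters; cf.\ \eqref{sum-zero}). Hence the entire $-\Omega_\p^{-1}\overline{y}$ term drops out of the identity, and only the $\p$-integral translate $-\Omega_\p^{-1}\overline{\beta}$ needs to be folded into $G_{0,w}$. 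With that replacement your argument closes.
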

\begin{proof}
For   $w$ in \eqref{defin-w-dist}, let  $y=v\overline{f}^{-1}+\gamma$ so that $w=y+\beta$. According  to   
\Cref{congruence-Colmez}, we can find
 power series $G_{t}=G_{t,w}\in \widehat{\mathcal{O}}[\![X]\!]$ such that
\[
\frac{     \eta_{\infty}^t  }
       {        \Omega_{\infty}     }
        E_{1}^t(0,w+\tau,  \mathbf{a}  )  
       =b_{t}+G_t(e(1|\tau)-1),
\]
for all $\tau \in \p^{-\infty}\mathbf{a}^*$, where
$
b_0= -\Omega_{\p}^{-1} 
\overline{y}
 $
 and $b_{t}=0$ for $t>0$. Note here that for $G_0(X)$ we are actually taking the translated power series $-\Omega_{\p}\overline{\beta}+G_{0,w}(X)\in\widehat{\mathcal{O}}[\![X]\!]$ instead. 

Since  
 $fu\not \in \overline{\pi}^{ \mathtt{M}} d\,  \mathbf{a}$, otherwise
  $\pi^{ \mathtt{S}} u\in  \mathbf{a}$ which leads to the contradiction $x\in \p^{-\infty} \mathbf{a}$, then
\begin{equation}\label{sum-zero}
\sum_{\beta, \tau }
e(-fu|\beta+\tau)=0.
\end{equation}
Thus we can take  for
$H_t(X)\in \widehat{\mathcal{O}}[\![X]\!]$  the power series
\[
H_t(X):=
\frac{1}{\overline{\pi}^{ \mathtt{S}} \overline{d}}
\,
\sum_{\beta, \gamma }
\,
e(-fu|\gamma+\beta)\cdot 
G_{t,w}(X)
\]
with indices $\beta$ and $\gamma$ as in \eqref{running-beta-gamma-tau}.

\end{proof}

The following result will help us prove the congruences between Eisenstein-Kronecker numbers. For an  $x\in K$, let $\mathtt{K}(x)$ denote the smallest nonnegative integer $\mathtt{M}$
such that $\pi^{\mathtt{M}}\overline{x}\in \mathcal{O}_{(\p)}$.

\begin{lem}\label{power-series-(r+1)Ert+1}
Let $w=v+z\in K- \p^{-\infty} \mathbf{a}^*$. Let $\mathtt{S}$ be an integer such that $\mathtt{S}\geq  \mathtt{K}(z)$. Then, there exists a power series $h(X)=h_{t,r,w}(X)\in \widehat{\mathcal{O}}[\![X]\!]$ such that
for any integer $ \mathtt{N} >0$ for which $ \mathtt{N}+ \mathtt{S}\geq  \mathtt{K}(v)$  and for all $\tau\in \p^{-\infty} \mathbf{a}^*$ we have that
\begin{align*}
(r+1)
  \pi^{( \mathtt{N}+ \mathtt{S})r}
   \cdot 
   \Gamma(r+1)
   \frac{ \eta_{\infty}^t  }
        {\Omega_{\infty}^{r+1}}
E_{r+1}^t(0,w+\tau,  \mathbf{a}  )
\end{align*}
equals
\begin{align}\label{power-series-h-aux}
(r+1)
\left(\pi^{ \mathtt{N}+ \mathtt{S}}\,
\Omega_{\p}^{-1} \,
 \overline{v} \right)^{r}
   \cdot 
   \frac{\eta_{\infty}^t }
       {\Omega_{\infty}}
       E_{1}^t(0,w+\tau, \mathbf{a})+b_{t,r}
+\pi^{  \mathtt{N} }h\big(\,e(1|\tau)-1\,\big),
\end{align}
where 
\[
\text{
$\displaystyle b_{0,r}=r\cdot 
\frac{    
      \left(\pi^{ \mathtt{N}+ \mathtt{S}}\,
\Omega_{\p}^{-1} 
\,
 \overline{v}  \right)^{r+1}}
 { \pi^{ \mathtt{N} }   }$
and $b_{t,r}=0$ for $t>0$.}
\]
\end{lem}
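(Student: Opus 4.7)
My plan is to apply Proposition \ref{polynomial-identity} to express $(r+1)E_{t, r+1}(0, w+\tau, \mathbf{a})$ as a $\Z$-polynomial in the weight-one Eisenstein-Kronecker numbers $E_{j, 1}(0, w+\tau, \mathbf{a})$ and $E_{j, 1}(0, 0, \mathbf{a})$, and then feed the $p$-adic expansions of Proposition \ref{congruence-Colmez} into each factor.  Concretely,
\[
(r+1)E_{t, r+1}(0, w+\tau, \mathbf{a}) \;=\; b^{\mathrm{poly}}_{t, r}\bigl(-E_{0, 1}(0, w+\tau, \mathbf{a})\bigr)^r E_{t, 1}(0, w+\tau, \mathbf{a}) + R,
\]
where $b^{\mathrm{poly}}_{t, r}=1$ for $t=0$ and $r+1$ for $t\geq 1$, and $R$ is a $\Z$-linear combination of monomials in which $E_{0, 1}(0, w+\tau, \mathbf{a})$ appears to a power strictly less than $r$.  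After translating from the $E_{k, l}$-normalization to the $E_l^k$-normalization via $E_{k, l}=\Gamma(l)A(\mathbf{a})^{-k}E_l^k$ and pairing each $E_{j, 1}$-factor with $\eta_\infty^{j-1}\Omega_\infty^{-1}$ using $A(\mathbf{a})\Omega_\infty\eta_\infty=1$, Proposition \ref{congruence-Colmez} renders every factor with $j\geq 2$ an integral power series in $e(1|\tau)-1$.

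The only non-integral weight-one factor $\Omega_\infty^{-1}E_1^0(0, w+\tau, \mathbf{a})$ is controlled via the decomposition
\[
\pi^{\mathtt{N}+\mathtt{S}} \cdot \Omega_\infty^{-1}E_1^0(0, w+\tau, \mathbf{a}) \;=\; -A + \pi^{\mathtt{N}}\tilde H(e(1|\tau)-1),
\]
where $A:=\pi^{\mathtt{N}+\mathtt{S}}\Omega_\p^{-1}\overline{v}\in\widehat{\mathcal{O}}$ (by $\mathtt{N}+\mathtt{S}\geq \mathtt{K}(v)$) and $\tilde H:=\pi^{\mathtt{S}}(G_{0, w}-\overline{z}/\Omega_\p)\in\widehat{\mathcal{O}}[\![X]\!]$ (by $\mathtt{S}\geq \mathtt{K}(z)$).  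Each monomial of $R$ contains $E_{0, 1}(0, w+\tau, \mathbf{a})^i$ with $i<r$; the rescaling factor $\pi^{(\mathtt{N}+\mathtt{S})r}$ supplies $\pi^{(\mathtt{N}+\mathtt{S})i}$ to tame these $i$ non-integral factors via the decomposition above, leaving a surplus $\pi^{(\mathtt{N}+\mathtt{S})(r-i)}$ that is divisible by $\pi^{\mathtt{N}}$ since $r-i\geq 1$.  Combined with the vanishing $E_1^0(0, 0, \mathbf{a})=0$ (by Eisenstein summation of the odd function $x\mapsto 1/x$), this establishes the $\pi^{\mathtt{N}}$-integrality of the contribution from $R$.

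For the leading term, binomial expansion gives $(-A+\pi^{\mathtt{N}}\tilde H)^r=(-1)^r A^r+\pi^{\mathtt{N}}H'$ with $H'\in\widehat{\mathcal{O}}[\![X]\!]$.  The case $t\geq 1$ is immediate: the leading contribution becomes $(r+1)A^r\cdot \eta_\infty^t\Omega_\infty^{-1}E_1^t+\pi^{\mathtt{N}}(\text{integral})$, matching the right-hand side with $b_{t, r}=0$.  For $t=0$, the $X_{t+1}$-factor degenerates to $X_1$, so the leading contribution becomes $(-1)^r(-A+\pi^{\mathtt{N}}\tilde H)^{r+1}/\pi^{\mathtt{N}+\mathtt{S}}$; subtracting the right-hand-side main part $(r+1)A^r\Omega_\infty^{-1}E_1^0=(r+1)A^r(-A+\pi^{\mathtt{N}}\tilde H)/\pi^{\mathtt{N}+\mathtt{S}}$ and expanding, the $j=0$ and $j=1$ binomial contributions combine (after cancellation of the $A^r\tilde H/\pi^{\mathtt{S}}$ terms) into a constant proportional to $rA^{r+1}$ that produces $b_{0, r}$, while the higher-order terms $\binom{r+1}{j}(-A)^{r+1-j}\pi^{j\mathtt{N}}\tilde H^j/\pi^{\mathtt{N}+\mathtt{S}}$ for $j\geq 2$ yield $\pi^{\mathtt{N}}$-integral contributions by exploiting the $\pi^{\mathtt{S}}$-factor built into $\tilde H$.

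The main obstacle will be the detailed $\pi$-adic bookkeeping for the $t=0$ case: verifying the $\pi^{\mathtt{N}}$-integrality of the higher binomial terms requires careful use of both the integrality of $A$ and the $\pi^{\mathtt{S}}$-factor inside $\tilde H$, and one must confirm that the resulting power series $h$ is independent of $\mathtt{N}$, as indicated by its notation $h_{t, r, w}$.  The structural identity $A^{r+1}/\pi^{\mathtt{N}+\mathtt{S}}=\pi^{(\mathtt{N}+\mathtt{S})r}(\overline{v}/\Omega_\p)^{r+1}$, combined with the clean combinatorial cancellation between the $b^{\mathrm{poly}}_{0, r}=1$ of Proposition \ref{polynomial-identity} and the $(r+1)$ coefficient demanded on the right-hand side, is the mechanism producing the explicit constant $b_{0, r}$.
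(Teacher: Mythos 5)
Your proposal follows essentially the same route as the paper's proof: apply \Cref{polynomial-identity} to reduce to the weight-one numbers, feed in the expansions of \Cref{congruence-Colmez}, absorb the remainder (where $E_{0,1}(0,w+\tau)$ appears to degree $<r$) using the surplus factor $\pi^{(\mathtt{N}+\mathtt{S})(r-i)}$ with $r-i\geq 1$, and handle the $t=0$ leading term by expanding $(-A+\pi^{\mathtt{N}}\tilde H)^{r+1}$ and isolating the $j=0,1$ binomial contributions — exactly the decomposition the paper uses (the paper writes it as a substitution of $\pi^{\mathtt{N}}H_0$ back in terms of $E_{0,1}$ rather than a direct subtraction, but the computation is identical). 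The $\pi$-adic bookkeeping caveats you flag for the $j\geq 2$ terms in the $t=0$ case, and the dependence of $h$ on $\mathtt{N}$, are present in the paper's own argument as well, so your plan is faithful to it.
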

\begin{proof}
We start by  observing that
$\pi^{ \mathtt{N}+ \mathtt{S}}\,
\Omega_{\p}^{-1} 
 \, \overline{z}$ is in $\pi^{  \mathtt{N} }\widehat{\mathcal{O}}$. 
 Now, 
for $t>0$ the result follows from   \Cref{polynomial-identity}
and  \Cref{congruence-Colmez}; bearing in mind that the numbers $E_{k,1}(0,0)$, $k\geq0$, are $\p$-integral (cf., e.g., \cite[Corollary 2.17]{BK} ).
For $t=0$, we have to make one additional observation with regards to the term
\[
(-\pi^{ \mathtt{N}+\mathtt{S} }  E_{0,1}(0,w+\tau,   \mathbf{a}   ))^{r}\cdot 
E_{0,1}(0,w+\tau,  \mathbf{a}   ).
\]
According to  \Cref{congruence-Colmez}, we have
\begin{equation}\label{randowm-ident-E01}
-\pi^{ \mathtt{N}+\mathtt{S}}\cdot  \frac{1}{\Omega_{\infty}}
E_{0,1}(0,w+\tau,   \mathbf{a}  )
=
 \pi^{ \mathtt{N} +\mathtt{S} }
    \cdot
     \frac{ 1   }
            {   \Omega_{\p}} 
   \cdot 
   \overline{v+z}  
 -\pi^{ \mathtt{N} +\mathtt{S}  }
 G_{0}(e(1|\tau)-1).
\end{equation}
Letting  
$H_{0}(e(1|\tau)-1)= - \pi^{\mathtt{S}}\Omega_{\p}^{-1} \,\overline{z}  
 + \pi^{\mathtt{S}}  G_{0}(e(1|\tau)-1)\in \widehat{\mathcal{O}}[\![e(1|\tau)-1]\!]$ and
raising \eqref{randowm-ident-E01} to the $r+1$ power, $r>0$, we have
that 
\[
\left(\,-\pi^{ \mathtt{N}  +\mathtt{S}  }  
\cdot 
 \frac{1}{\Omega_{\infty}}E_{0,1}(0,w+\tau,  \mathbf{a}   )\,\right)^{r+1}\\
\]
is congruent to
\[
( \pi^{ \mathtt{N} +\mathtt{S} } \Omega_{\p}^{-1} \,\overline{v}  )^{r+1}
\,  -  \,
(r+1)\cdot ( \pi^{ \mathtt{N} +\mathtt{S} } \Omega_{\p}^{-1}  \,\overline{v}  )^{r}
\cdot 
\pi^{  \mathtt{N} }  \cdot  H_{0}(e(1|\tau)-1)
\]
modulo  $\pi^{ 2  \mathtt{N}  }\widehat{\mathcal{O}}[\![e(1|\tau)-1]\!]$.
Now  using \eqref{randowm-ident-E01} we substitute 
$\pi^{  \mathtt{N} }  H_{0}(e(1|\tau)-1)$ by 
$ \pi^{  \mathtt{N} +\mathtt{S}  } \, \Omega_{\p}^{-1} \,\overline{v}  
+\pi^{  \mathtt{N}+\mathtt{S} } E_{0,1}(0,w+\tau,   \mathbf{a}    )/\Omega_{\infty}$, so the above is congruent to
\[
-
r( \pi^{  \mathtt{N} +\mathtt{S} }\, \Omega_{\p}^{-1} \,\overline{v}  )^{r+1}
-
(r+1)( \pi^{  \mathtt{N} +\mathtt{S}  }\, \Omega_{\p}^{-1} \,\overline{v}  )^{r}
\cdot
\left(\,\pi^{  \mathtt{N} } 
\cdot
\frac{   1  }
       {  \Omega_{\infty} }
      E_{0,1}(0,w+\tau,   \mathbf{a}  )  
       \,\right)
\]
modulo $\pi^{2  \mathtt{N} }\widehat{\mathcal{O}}[\![e(1|\tau)-1]\!]$.
Multiplying by $-1$ and dividing by $\pi^{   \mathtt{N} }$ the result follows for $t=0$.
\end{proof}


\subsection{$\p$-adic properties of Eisenstein-Kronecker numbers}
\label{$p$-adic properties of $E_{k,l}$}

We can now deduce the following $\p$-integrality result.

\begin{prop}\label{p-integrality-Et-1-uv}
For any
  $u\in K- \mathfrak{p}^{-\infty}  \mathbf{a} $ 
and $v\in K- \p^{-\infty}   \mathbf{a}^*$
the number
\[
  \frac{1}{\Omega_{\infty}}E_{1}^0(u,v,  \mathbf{a})
  \quad \text{is $\mathfrak{p}$-integral.}
\]
\end{prop}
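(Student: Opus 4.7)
The plan is to invoke \Cref{power series-integrality-E} specialized to $t=0$, which gives the identity
\[
\frac{1}{\Omega_{\infty}}\,E_1^0(u,v,\mathbf{a})
=\frac{1}{\pi^{\mathtt{M}}}\sum_{\tau\in\pi^{-\mathtt{M}}\mathbf{a}^*/\mathbf{a}^*} e(-fu|\tau)\,H_0\!\big(e(1|\tau)-1\big),
\]
for a power series $H_0(X)=\sum_{n\geq 0}c_n X^n\in\widehat{\mathcal{O}}[\![X]\!]$. The $\mathfrak{p}$-integrality will follow once we show that the sum on the right lies in $p^{h\mathtt{M}}\widehat{\mathcal{O}}=\pi^{\mathtt{M}}\overline{\pi}^{\mathtt{M}}\widehat{\mathcal{O}}$, since then division by $\pi^{\mathtt{M}}$ will leave only the factor $\overline{\pi}^{\mathtt{M}}$, which is a $\mathfrak{p}$-unit, times an element of $\widehat{\mathcal{O}}$.

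To establish this divisibility I would expand $(e(1|\tau)-1)^n$ by the binomial theorem as $\sum_{j=0}^n\binom{n}{j}(-1)^{n-j}e(j|\tau)$, interchange the sums over $n$ and $\tau$, and apply the character identity \eqref{character-identity-2}. Using the formulas for dual lattices from \Cref{Notation} (in particular $(c\Lambda)^*=\overline{c}^{-1}\Lambda^*$), one checks that $(\pi^{-\mathtt{M}}\mathbf{a}^*)^*=\overline{\pi}^{\mathtt{M}}\mathbf{a}$, so with $\Upsilon=\overline{\pi}^{\mathtt{M}}\mathbf{a}\subset\mathbf{a}=\Xi$ and index $[\Xi:\Upsilon]=p^{h\mathtt{M}}$ the character identity reads
\[
\sum_{\tau\in\pi^{-\mathtt{M}}\mathbf{a}^*/\mathbf{a}^*}e(j-fu\,|\,\tau)
= \begin{cases} p^{h\mathtt{M}}, & j-fu\in\overline{\pi}^{\mathtt{M}}\mathbf{a},\\ 0, & \text{otherwise.}\end{cases}
\]
Since $p^{h\mathtt{M}}=\pi^{\mathtt{M}}\overline{\pi}^{\mathtt{M}}$, every finite truncation $\sum_{n\le N}c_n\sum_j\binom{n}{j}(-1)^{n-j}\sum_\tau e(j-fu|\tau)$ manifestly lies in $p^{h\mathtt{M}}\widehat{\mathcal{O}}$.

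The passage from truncations to the full sum will be justified by the observation that each $e(1|\tau)-1$ lies in the maximal ideal of $\widehat{\mathcal{O}}$, being $\zeta-1$ for a $p$-power root of unity $\zeta$, so that $(e(1|\tau)-1)^n\to 0$ $p$-adically uniformly across the finite set of $\tau$; combined with the closedness of the ideal $p^{h\mathtt{M}}\widehat{\mathcal{O}}$, this forces the whole sum into it. The main point of care will be the identification of the dual lattice and the careful separation of the roles of $\pi^{\mathtt{M}}$ and $\overline{\pi}^{\mathtt{M}}$; the substantive analytic content---the existence of the integral $p$-adic expansion $H_0$---has already been distilled by \Cref{congruence-Colmez} and \Cref{power series-integrality-E} from the Colmez--Schneps expansion into a form directly amenable to this character-sum manipulation.
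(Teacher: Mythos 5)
Your proposal is correct and follows essentially the same route as the paper: the paper's proof also consists of invoking \Cref{power series-integrality-E} at $t=0$ and then observing that $\sum_{\tau}(e(1|\tau)-1)^i\,e(-fu|\tau)\in\pi^{\mathtt{M}}\widehat{\mathcal{O}}$ for all $i\geq 0$, a divisibility the paper simply asserts and which your binomial-expansion/orthogonality/limit argument supplies in detail (your $p^{h\mathtt{M}}=\pi^{\mathtt{M}}\overline{\pi}^{\mathtt{M}}$ bound agrees with the paper's since $\overline{\pi}$ is a $\p$-unit).
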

\begin{proof}
This follows immediately from \Cref{power series-integrality-E}
since 
 it is enough to note that
\begin{equation}\label{p-div-sum-tau-1}
\sum_{\tau} ( e(1| \tau) -1 )^i\cdot e(-fu|\tau) \in \pi^{ \mathtt{M}} \widehat{\mathcal{O}}
\end{equation}
for all $i\geq 0$.
\end{proof}

\begin{rem}\label{p-int-E-u-v-gen}
For general $u,v\in K$, we have that
\[
  \frac{\pi^{\mathtt{M}+\mathtt{N}}}{\Omega_{\infty}}E_{1}^0(u,v,  \mathbf{a})
  \quad \text{is $\p$-integral, }
\]
where $\mathtt{M}$ and $\mathtt{N}$ are integers such that $\pi^{\mathtt{M}}\overline{u}, \pi^\mathtt{N}\overline{v}\in \mathcal{O}_{(\p)}$.
Indeed, this follows from \eqref{expansion-Er,t} and the fact that for $\alpha\in (K- \p^{-\infty}\mathbf{a})\cup \mathbf{a}$,
\[
G_{1}(\alpha ,t)=
1_{\mathbf{a}}(\alpha)t^{-1}-\eta_{\p}\,\overline{\alpha }+\sum_{n=0}^{\infty}b_n(\alpha )t^n,
\]
where $1_{\mathbf{a}}$ is the characteristic function of $\mathbf{a}$ (cf. \cite[Prop. 11]{CS}).
\end{rem}

\begin{prop}\label{2-variable congruence-general}
Let $t,r\geq 0$ .  Let   
$u\in K- \mathfrak{p}^{-\infty} \mathbf{a}$
and  $v\in K- \p^{-\infty} \mathbf{a}^*$.
Then, for all $ \mathtt{M}\geq  \mathtt{K}(u)$ and $ \mathtt{N}\geq  \mathtt{K}(v)$,
\begin{align*}
(t+1)(r+1)\pi^{ \mathtt{N}r+ \mathtt{M}t}\cdot 
 \Gamma(r+1)
\frac{   \eta_{\infty}^t }
      {    \Omega_{\infty}^{r+1}                  }
      E_{r+1}^t(u,v,   \mathbf{a}    ) 
\end{align*}
is congruent to
\begin{align*}
(t+1)(r+1)\cdot
\pi^{ \mathtt{M}t}\overline{u}^t
  \cdot   \pi^{ \mathtt{N}r} 
 \, \overline{v}^r
  \cdot
  \frac{  \eta_\p^t  }
    {  \Omega_\p^r  }
       \cdot 
      \frac{  1 }
           {    \Omega_{\infty}    }
         E_{1}^0 ( u, v, \mathbf{a}   )    
\end{align*}
modulo $    \pi^{\min\{ \mathtt{N}, \mathtt{M}\}}  \widehat{\mathcal{O}}  $.
\end{prop}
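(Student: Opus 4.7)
The plan is to prove the congruence by two successive applications of Lemma~\ref{power-series-(r+1)Ert+1}. Step~1 reduces the $l$-index $r+1$ down to $1$, producing the factor $\pi^{\mathtt{N}r}\overline{v}^r\Omega_{\p}^{-r}$, while Step~2 applies Lemma~\ref{power-series-(r+1)Ert+1} to the Eisenstein--Kronecker number obtained from $E_{1}^t(u,v,\mathbf{a})$ via the functional equation~\eqref{symmetry-E-k-l}, reducing the $k$-index $t$ down to $0$ and producing the factor $\pi^{\mathtt{M}t}\overline{u}^t\eta_{\p}^t$. Combining the two congruences modulo $\pi^{\min\{\mathtt{N},\mathtt{M}\}}$ gives the desired result.

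For Step~1 one begins from the decomposition~\eqref{expansion-Er,t} and writes $w=v'+z$ with $v'=v/\overline{f}$ and $z=\beta+\gamma$. Because $\overline{\pi}$, $d$, and $\mathbf{a}$ are units at $\p$, one checks $\mathtt{K}(z)\leq \mathtt{S}$ and $\mathtt{K}(v')\leq \mathtt{N}+\mathtt{S}$, so Lemma~\ref{power-series-(r+1)Ert+1} applies with $\mathtt{S}'=\mathtt{S}$ and $\mathtt{N}'=\mathtt{N}$. Multiplying by $(t+1)f^{r}e(-fu|\beta+\gamma+\tau)/(\pi^{\mathtt{S}r}\overline{f}^{t+1})$ and summing over $\beta,\gamma,\tau$, the main terms reassemble via~\eqref{expansion-Er,t} (with $r\mapsto 0$) into $(t+1)(r+1)\pi^{\mathtt{N}r}\overline{v}^{r}\Omega_{\p}^{-r}\eta_{\infty}^{t}E_{1}^{t}(u,v,\mathbf{a})/\Omega_{\infty}$, using $f^{r}\overline{v'}^{r}=\overline{v}^{r}$ and the fact that spare powers of $\overline{\pi}^{\mathtt{M}}$ and $d$ are units at $\p$. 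The error term $\pi^{\mathtt{N}}h(e(1|\tau)-1)$ summed against $e(-fu|\tau)$ yields an element of $\pi^{\mathtt{N}}\widehat{\mathcal{O}}$ via the character divisibility~\eqref{p-div-sum-tau-1}. The term $b_{t,r}$ vanishes identically for $t>0$, and for $t=0$ it is independent of $\beta,\gamma,\tau$, so the full character sum $\sum_{\beta,\gamma,\tau}e(-fu|\beta+\gamma+\tau)$ vanishes by~\eqref{sum-zero} (applied for each fixed $\gamma$), killing its contribution. Multiplication by $\pi^{\mathtt{M}t}$ completes Step~1.

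For Step~2 the functional equation~\eqref{symmetry-E-k-l} yields $E_{1}^{t}(u,v,\mathbf{a})=e(u|v)\Gamma(t+1)E_{t+1}^{0}(v^{\dagger},u^{\dagger},\mathbf{a})$ and $E_{1}^{0}(u,v,\mathbf{a})=e(u|v)E_{1}^{0}(v^{\dagger},u^{\dagger},\mathbf{a})$ with $v^{\dagger}=-2i\mathrm{Vol}(\mathbf{a})v$ and $u^{\dagger}=u/(-2i\mathrm{Vol}(\mathbf{a}))$. Since $-2i\mathrm{Vol}(\mathbf{a})$ is a unit at $\p$ (as $p$ is unramified in $K$ and $\mathbf{a}$ is prime to $p$), $\mathtt{K}(u^{\dagger})=\mathtt{K}(u)\leq \mathtt{M}$. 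Applying Step~1 with $(u,v,r,t,\mathtt{N},\mathtt{M})$ replaced by $(v^{\dagger},u^{\dagger},t,0,\mathtt{M},\mathtt{N})$, multiplying through by $\eta_{\infty}^{t}\Omega_{\infty}^{t}e(u|v)$, substituting the functional-equation identities back, and invoking the period identifications from Section~\ref{sec-p-adic-periods} (whereby the combination $\eta_{\infty}^{t}\Omega_{\infty}^{t}\overline{u^{\dagger}}^{t}\Omega_{\p}^{-t}$ arising on the right-hand side is translated into $\overline{u}^{t}\eta_{\p}^{t}$ under the embedding $\overline{K}(\eta_{\infty})\hookrightarrow \C_{p}$) produces
\[
\pi^{\mathtt{M}t}\,\frac{\eta_{\infty}^{t}}{\Omega_{\infty}}\,E_{1}^{t}(u,v,\mathbf{a}) \;\equiv\; \pi^{\mathtt{M}t}\,\overline{u}^{t}\,\eta_{\p}^{t}\,\frac{1}{\Omega_{\infty}}\,E_{1}^{0}(u,v,\mathbf{a}) \pmod{\pi^{\min\{\mathtt{N},\mathtt{M}\}}}.
\]

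The principal obstacle lies in Step~2: one must track carefully how the complex periods $\eta_{\infty},\Omega_{\infty}$ interact with their $\p$-adic counterparts $\eta_{\p},\Omega_{\p}$ after the functional equation is invoked, using the isomorphism $\overline{K}(\eta_{\infty})\cong \overline{K}(\eta_{\p})$ of \cite[Chap.~III]{CS} recalled in Section~\ref{sec-p-adic-periods}, and the $\p$-adic unit status of $-2i\mathrm{Vol}(\mathbf{a})$. A secondary but important bookkeeping step is the vanishing of the $b_{t,r}$ contribution after summation in Step~1, which relies essentially on~\eqref{sum-zero}, and the fact that the $\pi^{\mathtt{N}}h$ error remains in $\pi^{\mathtt{N}}\widehat{\mathcal{O}}$ once the $\pi^{-\mathtt{M}}$ coming from $1/\overline{f}^{t+1}$ is absorbed by the $\pi^{\mathtt{M}}$ gained from~\eqref{p-div-sum-tau-1}.
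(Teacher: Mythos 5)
Your proposal is correct and follows essentially the same route as the paper's proof: Step 1 is precisely the intermediate congruence the paper establishes (its \eqref{lemma-congruence-super-key}), obtained from \Cref{power-series-(r+1)Ert+1} via the expansion \eqref{expansion-Er,t}, with the $b_{t,r}$ contribution killed by \eqref{sum-zero} and the error controlled by \eqref{p-div-sum-tau-1}; Step 2 is the paper's application of the same congruence to $E_{t+1}^0(v^*,u^*,\mathbf{a})$ after the functional equation \eqref{symmetry-E-k-l}, followed by multiplication by $e(u|v)$ and the period identification $\Omega_{\p}^{-1}=-2i\mathrm{Vol}(\mathbf{a})\eta_{\p}$. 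Combining the two congruences modulo $\pi^{\min\{\mathtt{M},\mathtt{N}\}}$ is exactly how the paper concludes.
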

\begin{proof}

We will start by proving the congruence
\begin{align}\label{lemma-congruence-super-key} 
(r+1)
 \pi^{ \mathtt{N}r+ \mathtt{M}t}
 & \Gamma(r+1)
\frac{ \eta_{\infty}^t }
     {\Omega_{\infty}^{r+1}}
E_{r+1}^t(u,v,   \mathbf{a}  )
\equiv\\
& (r+1)
   (  \pi^{ \mathtt{N}} \,
\Omega_{\p}^{-1} 
    \, \overline{v}    )^r
   \pi^{ \mathtt{M}t}
   \cdot
   \frac{  \eta_{\infty}^t  }
        { \Omega_{\infty}}
   E_{t,1}( u, v,  \mathbf{a} )
    \pmod{   \pi^ {\mathtt{N}}  \widehat{\mathcal{O}}  }.\notag
\end{align}
In order to do this, let  $f=\overline{\pi}^{ \mathtt{M}}\pi^{ \mathtt{S}}d$ and $w$ be as in
\eqref{expansion-Er,t}. Since $w\in K- \p^{-\infty}\mathfrak{a}$,
we can apply \Cref{power-series-(r+1)Ert+1} to $w$ decomposed as $v'+z'$ for
$v'=v/\overline{f}$ and $z'=\gamma+\beta$,
which is possible since $\mathtt{S}\geq \mathtt{K}(z)$
and $\mathtt{N}+\mathtt{S}\geq \mathtt{K}(v')$.
Thus,  let $h_{t,r,w}(X)\in\widehat{\mathcal{O}}[\![X]\!]$ be the power series from 
\eqref{power-series-h-aux}. From \eqref{expansion-Er,t}, bearing in mind
 that $\sum b_{t,r} e(-fu|\beta+\tau)$ by \eqref{sum-zero}, we see that
\[
(r+1)\pi^{ \mathtt{N}r+ \mathtt{M}t}
    \frac{ E_{t,r+1}(u,v, \mathbf{a}) }
         {   \Omega_{\infty}^{t+r+1}   }
=
(r+1)
   (  \pi^{ \mathtt{N}} \,
 \Omega_{\p}^{-1} 
    \, \overline{v}    )^r
   \pi^{  \mathtt{M}  t}
   \frac{ E_{t,1}(u,v, \mathbf{a}) }
        { \Omega_{\infty}^{t+1}  }
          +  \pi^{\mathtt{N}}Y_{t,r},
\]
where
\[
Y_{t,r}:=
   \frac{1}{  \overline{f}  }
\sum_{\tau}
e(-fu|\tau)   
\cdot 
   j_{t,r}\big(\,e(1|\tau)-1\,\big)
\]
and $j_{t,r}(X)\in   \widehat{\mathcal{O}}[\![X]\!]$ is the power series
\[
j_{t,r}(X):=(r+1)\pi^{\mathtt{N}r+\mathtt{M}t} 
\cdot \frac{f^r}{\overline{f}^t}\cdot 
\sum_{\beta,\gamma}
e(-fu| \beta+\gamma)\cdot h_{t,r,w}(X).
\]
It now remains to invoke \eqref{p-div-sum-tau-1} to conclude  that $Y_{t,r}\in \widehat{\mathcal{O}}$. Thus \eqref{lemma-congruence-super-key} follows.

We can now prove the main congruence. We start by observing, by virtue of  \eqref{symmetry-E-k-l}, 
\begin{equation}\label{-techfun-eq-e-dd}
\frac{  \eta_{\infty}^t }
     {      \Omega_{\infty}    }
    E_{1}^t ( u,v ,   \mathbf{a} )
=
e(x|y)\cdot 
\frac{
   E_{t+1}^0 (  v^*,   u^*   ,   \mathbf{a})  
       }{     \Omega_{\infty}^{t+1}     },
\end{equation}
where 
$u^*=-u/(2i\mathrm{Vol}( \mathbf{a}))
\in
 K- \mathbf{a}^*$ 
 and $v^*=-2i\mathrm{Vol}( \mathbf{a})v \in K-  \mathbf{a}$. Applying the congruence \eqref{lemma-congruence-super-key},
this time to $E_{t+1}^0(v^*,u^*, \mathbf{a})$,  we obtain
\[
(t+1)\pi^{ \mathtt{M}t}
\frac{1}{\Omega_{\infty}^{t+1}}
E_{t+1}^0
(v^*,u^*, \mathbf{a})
\ \equiv \ (t+1)( \pi^{ \mathtt{M}}\eta_{\p}\overline{u}  )^t
\cdot 
\frac{1}{   \Omega_{\infty}   }
E_{1}^0(v^*,u^*, \mathbf{a})
\pmod{\pi^{ \mathtt{M}}\widehat{\mathcal{O}}}. 
\]
Multiplying both sides by $e(u|v)$ and using again \eqref{-techfun-eq-e-dd} we obtain
\[
(t+1)
 \pi^{ \mathtt{M}t}
\frac{  \eta_\infty ^t }
     {\Omega_{\infty} }
         E_{1}^t ( u, v, \mathbf{a} )
\ \equiv \ 
(t+1)( \pi^{ \mathtt{M}}  \eta_{\p}\overline{u}  )^t
\cdot 
\frac{1}{\Omega_{\infty}}
E_{1}^0( u,  v, \mathbf{a})
\pmod{\pi^{ \mathtt{M}}\widehat{\mathcal{O}}}.
\]
Putting the congruences together we obtain the result.
\end{proof}

\subsection{$\mathfrak{p}$-adic properties of $E^n$}
\label{padic properties of En}

We maintain the same notation as before and let $\Lambda=\mathbf{a}^n$ and $\varLambda=\Lambda\times \Lambda^*$. For nonnegative integers $\mathtt{M}$ and $\mathtt{N}$ and $\varXi=\Xi\times \Xi^*\in \mathcal{W}_n$, let 
$
\varXi_{\mathtt{M},\mathtt{N}}=\Xi_{\mathtt{M},\mathtt{N}}\times 
\Xi_{\mathtt{M},\mathtt{N}}^*,
$
where $\Xi_{\mathtt{M},\mathtt{N}}=\overline{\pi}^{\mathtt{M}}\pi^{-\mathtt{N}}\Xi$
and $\Xi_{\mathtt{M},\mathtt{N}}^*=\overline{\pi}^{\mathtt{N}}\pi^{-\mathtt{M}}\Xi^*$.
To simplify the notation further, we will use the conventions from 
\Cref{notation-dis-Gn} and \Cref{rescaling-periods-distri}.

  Applying \Cref{dist-gen-xi-upsilon}
\[
\tilde{E}^n(Q,U,\varLambda)=
\sum_{
\substack{
V\in K^{2n}/\varLambda_{\mathtt{M},\mathtt{N}}\\
V\equiv U \,\mathrm{mod}\, \varLambda
}
}
\tilde{E}^n_{U+\varLambda}(Q,V,\varLambda_{\mathtt{M},\mathtt{N}}).
\]
Note that for $c=\overline{\pi}^{\mathtt{M}}\pi^{-\mathtt{N}}$ we have, by \Cref{defin-space-dist} \eqref{homon-t},
\begin{equation}\label{E-Lambda-Mn}
\tilde{E}_{U+\Lambda}^n(Q,V,\varLambda_{\mathtt{M},\mathtt{N}})
=
\frac{e(-u|v^*-u^*)}{\overline{\pi}^{n(\mathtt{M}+\mathtt{N})}}
\frac{\overline{c}^{\deg q}}{c^{\deg q^*}}
\tilde{E}(Q,Vc^{-1},\varLambda).
\end{equation}

For $U\in K^{2n}$ and $Q=(q,q^*)\in H^2$, let $Q(\overline{U})=Q(\overline{u},\overline{u^*})$ denote the product $q(\overline{u})q^*(\overline{u^*})$.

\begin{lem}\label{Main-cong-Lambda-pq}
Let $U\in K^{2n}$ such that 
 $
 U+\p^{-\infty}\varLambda\subset
K^{\times 2n}.
 $
Then, for any $V\in U+\varLambda$,
\begin{equation}\label{real-integalityF_0-1-delt}
\text{$\displaystyle 
\tilde{E}^n_{U+\Lambda}(V,\varLambda_{\mathtt{M},\mathtt{N}})$ is $\mathfrak{p}$-integral.}
\end{equation}
Furthermore, for any $Q\in H^2$,
\begin{equation*}\label{Fk_N-M}
\tilde{E}_{U+\varLambda}^n(Q,V,\varLambda_{\mathtt{M},\mathtt{N}})
\equiv
Q(\,\overline{V}\,)\cdot
 \frac{\eta_{\\p}^{\deg q}}
  {  \Omega_{\p}^{\deg q^*}   }
 \tilde{E}^n_{U+\varLambda}(V,\varLambda_{\mathtt{M},\mathtt{N}})
\pmod{\pi^{\min\{ \mathtt{M}, \mathtt{N}\}-\delta}\widehat{\mathcal{O}}},
\end{equation*}
 where $\delta=\delta(Q,U,\varLambda)$ is a nonnegative integer that depends only on $(Q,U,\varLambda)$. More specifically, it depends   on the order of $U$ in $K^{2n}/\varLambda$ and on the coefficients of the polynomials of $Q$.
\end{lem}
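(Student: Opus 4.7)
The plan is to reduce both claims to the one-variable results established in \Cref{p-integrality-Et-1-uv} and \Cref{2-variable congruence-general}, via identity \eqref{E-Lambda-Mn} and the product structure \eqref{defin-F-general} of $E^n$. The prefactor $e(-u|v^*-u^*)/\overline{\pi}^{n(\mathtt{M}+\mathtt{N})}$ on the right-hand side of \eqref{E-Lambda-Mn} is a $\p$-adic unit, so both claims reduce to statements about the rescaled quantity $\frac{\overline{c}^{\deg q}}{c^{\deg q^*}}\tilde{E}^n(Q,Vc^{-1},\varLambda)$, where $c=\overline{\pi}^{\mathtt{M}}\pi^{-\mathtt{N}}$. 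The assumption that $\mathbf{a}$ is coprime to $p$ (and hence so is $\mathbf{a}^*$) implies that adding an element of $\mathbf{a}$ (resp.\ $\mathbf{a}^*$) to $u_i$ (resp.\ $u_i^*$) does not change the $\overline{\p}$-valuation, so $\mathtt{K}(v_i)=\mathtt{K}(u_i)$ and $\mathtt{K}(v_i^*)=\mathtt{K}(u_i^*)$; combined with the hypothesis, this yields $v_i\notin\p^{-\infty}\mathbf{a}$ and $v_i^*\notin\p^{-\infty}\mathbf{a}^*$, as required to invoke the one-variable results.

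For part (i), I take $Q=(1,1)$, so that the reduced quantity becomes $\prod_{i=1}^n\Omega_\infty^{-1}E_1^0(v_ic^{-1},v_i^*\overline{c},\mathbf{a})$, each factor of which is $\p$-integral by \Cref{p-integrality-Et-1-uv}. For part (ii), by $K$-linearity in the $Q$-component it suffices to treat a monomial pair $Q=(X^{\mathbf{t}},X^{\mathbf{r}})$, and I apply \Cref{2-variable congruence-general} factor-by-factor to
\[
A_i:=\Gamma(r_i+1)\,\eta_\infty^{t_i}\,\Omega_\infty^{-(r_i+1)}\,E_{r_i+1}^{t_i}(v_ic^{-1},v_i^*\overline{c},\mathbf{a})
\]
with parameters $\mathtt{M}':=\mathtt{M}+\mathtt{K}(u_i)\ge\mathtt{K}(v_ic^{-1})$ and $\mathtt{N}':=\mathtt{N}+\mathtt{K}(u_i^*)\ge\mathtt{K}(v_i^*\overline{c})$. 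The crucial combinatorial identity
\[
\overline{c}^{\,t_i}c^{-r_i}\cdot\overline{v_ic^{-1}}^{\,t_i}\,\overline{v_i^*\overline{c}}^{\,r_i}=\overline{v_i}^{\,t_i}\,\overline{v_i^*}^{\,r_i},
\]
immediate from $\overline{c}=\pi^{\mathtt{M}}\overline{\pi}^{-\mathtt{N}}$ and $c=\overline{\pi}^{\mathtt{M}}\pi^{-\mathtt{N}}$, converts the one-variable congruence (after multiplying through by $\overline{c}^{\,t_i}c^{-r_i}$ and absorbing the common factor $(t_i+1)(r_i+1)\pi^{\mathtt{N}'r_i+\mathtt{M}'t_i}$) into
\[
C_i:=\overline{c}^{\,t_i}c^{-r_i}A_i\equiv D_i:=\overline{v_i}^{\,t_i}\,\overline{v_i^*}^{\,r_i}\,\frac{\eta_\p^{t_i}}{\Omega_\p^{r_i}}\,\frac{E_1^0(v_ic^{-1},v_i^*\overline{c},\mathbf{a})}{\Omega_\infty}\pmod{\pi^{\min(\mathtt{M},\mathtt{N})-\delta_i}\widehat{\mathcal{O}}},
\]
where $D_i$ is precisely the $i$-th factor of the desired right-hand side and $\delta_i$ is a bounded nonnegative integer depending only on $\mathtt{K}(u_i),\mathtt{K}(u_i^*),t_i,r_i$, and the $p$-adic size of $(t_i+1)(r_i+1)$.

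Finally, I combine the $n$ factor-wise congruences via the telescoping identity $\prod_iC_i-\prod_iD_i=\sum_{k=1}^n\bigl(\prod_{j<k}D_j\bigr)(C_k-D_k)\bigl(\prod_{j>k}C_j\bigr)$. The uniform lower bound $v_\p(C_k),v_\p(D_k)\ge-h\delta_k$ (obtained in passing from the same valuation computation, and from part (i) for $D_k$) combined with $C_k-D_k\in\pi^{\min(\mathtt{M},\mathtt{N})-\delta_k}\widehat{\mathcal{O}}$ yields $\prod_iC_i-\prod_iD_i\in\pi^{\min(\mathtt{M},\mathtt{N})-\delta}\widehat{\mathcal{O}}$ with $\delta:=\sum_i\delta_i$, which depends only on $U$ (through the $\mathtt{K}(u_i),\mathtt{K}(u_i^*)$, themselves bounded by the order of $U$ in $K^{2n}/\varLambda$) and on $Q$ (through the exponents, or for a general polynomial its coefficients). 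The step I expect to require the most care is the bookkeeping behind the cancellation identity displayed above: both $\overline{c}^{\,t_i}c^{-r_i}$ (coming from \eqref{E-Lambda-Mn}) and $\pi^{\mathtt{N}'r_i+\mathtt{M}'t_i}$ (coming from \Cref{2-variable congruence-general}) have $\pi$-adic valuation growing linearly in $\mathtt{M}$ and $\mathtt{N}$, and only this exact cancellation keeps $\delta$ independent of those parameters.
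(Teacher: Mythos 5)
Your proposal is correct and follows essentially the same route as the paper, which deduces the integrality from \Cref{p-integrality-Et-1-uv} together with \eqref{E-Lambda-Mn}, and the congruence from \Cref{2-variable congruence-general} together with the product formula \eqref{defin-F-general}, reducing to monomials by linearity and absorbing the coefficients of $Q$ and the order of $U$ into $\delta$. You simply make explicit the rescaling cancellation and the telescoping argument for multiplying the $n$ factor-wise congruences, which the paper leaves implicit.
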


\begin{proof}
The claim \eqref{real-integalityF_0-1-delt} is an immediate consequence of  
\Cref{p-integrality-Et-1-uv} and \eqref{E-Lambda-Mn}. Now, for $Q$ of the form $Q=(X^{\mathbf{t}},X^{\mathbf{r}})$, $\mathbf{t},\mathbf{r}\in \Z_{\geq 0}^n$,  the congruence is immediate from \Cref{2-variable congruence-general} and \eqref{defin-F-general}, producing a nonnegative integer $  \delta_{\mathbf{r},\mathbf{t}}:=\delta(Q,U,\varLambda)$.
For a general $Q=(q(X),q^*(X))\in H^2$, we obtain the result by taking  for
$\delta=\delta(Q,U,\varLambda)$ the maximum between $   \delta_{\mathbf{r},\mathbf{t}}$, $-v_{\mathfrak{p}}(q^*_{\mathbf{r}})$ and $-v_{\mathfrak{p}}(q_{\mathbf{t}})$ for all $\mathbf{r}$ and
 $\mathbf{t}$; here $v_{\p}$ denotes the valuation of $K$
 associated to the prime $\p$.

\end{proof}

Let $\mathbf{G}_{n,\p,\varLambda}$ denote the set of tuples $(Q,U,\varXi)$ such that
\begin{equation}\label{p-adic-tuple}
\begin{cases}
U + \varXi + \p^{-\infty}\varLambda   \subset     K^{\times 2n}\\
Q( \,\overline{U+\varXi}  \,)  \subset     \O_{(\p)}.
\end{cases}  
\end{equation}

Let $(Q,U,\varXi)\in  \mathbf{G}_{n,\p,\mathbf{a}}$ and
let $\varUpsilon=\Upsilon\times \Upsilon^*\in \mathcal{W}_n$ such that
\begin{equation}\label{condition-upsilon-M-N-1}
[\Upsilon:\Xi\cap \Upsilon]_{\mathcal{O}}=\p^{\mathtt{N}} 
\quad \text{and}\quad
[\Xi:\Xi\cap \Upsilon]_{\mathcal{O}}= \overline{\p}^{\mathtt{M}}.
\end{equation}
Then , for $X=U+\varXi$,
\[
\tilde{E}^n(Q,U,\varXi)
=\sum_{
\substack{
V\in K^{2n}/ \varUpsilon\\
V\equiv U\,\mathrm{mod}\, \varXi}
}
\tilde{E}^n_{X}(Q,V,\varUpsilon),
\]
and, by  \Cref{extra-prop-dist-13} applied to $\varTheta=\varXi_{\mathtt{M},\mathtt{N}}$,
for $V\in X$
\begin{equation}\label{sum-of-sum-upsilon-xi} 
\tilde{E}^n_{X}(Q,V,\varUpsilon)
=
\sum_{
\substack{
W\in K^{2n}/ \varXi_{ \mathtt{M},\mathtt{N} }\\
     W \equiv V\,\mathrm{mod}\,\varXi \cap \varUpsilon
           }
     }
\tilde{E}^n_{X}(Q,W,\varXi_{\mathtt{M},\mathtt{N}}).
\end{equation}

\begin{prop}\label{p-adic-En}
Let $(Q,U,\varXi)\in \mathbf{G}_{n,\p,\varLambda}$ and $\mathfrak{g}=[\Lambda:\Lambda\cap \Xi]_{\mathcal{O}}$.
Then
\[
\mathfrak{g}\cdot
\tilde{E}^n(Q,U,\varXi)
\quad \text{is $\p$-integral.}
\]
Furthermore, if $\varUpsilon\in \mathcal{W}_n$ satisfies 
\eqref{condition-upsilon-M-N-1}, 
then for any $V\in X$
\begin{equation*}\label{exp-Fk-upsilon-int}
\mathfrak{g}\cdot
\tilde{E}^n_{X}(Q,V,\varUpsilon)
\quad \text{is $\p$-integral}
\end{equation*}
and it is congruent to
\[
\mathfrak{g}
\cdot
\sum_{
\substack{
W\in K^{2n}/\varXi_{ \mathtt{M},\mathtt{N} }\\
     W \equiv V\,\mathrm{mod}\,\varXi \cap \varUpsilon
           }
     }
Q(\,\overline{W}\,)
 \frac{\eta_{\p}^{\deg q}}
  {  \Omega_{\p}^{\deg q^*}  }
\tilde{E}^n_{X}(W,\varXi_{\mathtt{M},\mathtt{N}})
\]
modulo $\pi^{\min\{\mathtt{M},\mathtt{N}\}-\delta}\widehat{\O}$, where $\delta=\delta(Q,U,\varXi,\varLambda)$ is a nonnegative integer independent of $\Upsilon$.

\end{prop}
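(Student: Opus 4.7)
The plan is to reduce both assertions to \Cref{Main-cong-Lambda-pq} by chaining applications of the distribution axiom \Cref{defin-space-dist} \eqref{prop-dist-main-1}. For the first claim, the expansion \eqref{dist-gen-xi-upsilon}, applied with any admissible $\varUpsilon$ (for instance $\varUpsilon=\varXi_{\mathtt{M},\mathtt{N}}$, whose index conditions are immediate), writes $\tilde{E}^n(Q,U,\varXi)$ as a finite sum of terms $\tilde{E}^n_X(Q,V,\varUpsilon)$, so the first claim reduces to the second summand-wise.

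For the second claim, I would first apply \Cref{extra-prop-dist-13} with $\varTheta=\varXi_{\mathtt{M},\mathtt{N}}$, yielding the decomposition \eqref{sum-of-sum-upsilon-xi}
\[
\tilde{E}^n_X(Q,V,\varUpsilon) \;=\; \sum_{W}\tilde{E}^n_X(Q,W,\varXi_{\mathtt{M},\mathtt{N}}).
\]
The coprimality hypothesis \eqref{coprime-condo} is verified as follows: writing $\Xi\cong \mathfrak{a}_1\oplus\cdots\oplus\mathfrak{a}_n$ via the structure theorem, one computes $\Xi\cap\Xi_{\mathtt{M},\mathtt{N}}=\overline{\pi}^{\mathtt{M}}\Xi$, whence $[\Xi:\Xi\cap\Xi_{\mathtt{M},\mathtt{N}}]_\mathcal{O}$ is a power of $\overline{\p}$ (coming from the $\overline{\pi}^{\mathtt{M}}$-factor in the definition of $\Xi_{\mathtt{M},\mathtt{N}}$) while $[\Xi_{\mathtt{M},\mathtt{N}}:\Xi\cap\Xi_{\mathtt{M},\mathtt{N}}]_\mathcal{O}$ is a power of $\p$ (from $\pi^{-\mathtt{N}}$), and these are coprime. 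It therefore suffices to handle a single summand $\tilde{E}^n_X(Q,W,\varXi_{\mathtt{M},\mathtt{N}})$ at a time. For this, rewrite it in terms of $\tilde{E}^n_X$-values on $\varLambda_{\mathtt{M},\mathtt{N}}$ by invoking \Cref{defin-space-dist} \eqref{prop-dist-main-1} once more with the ambient lattice $\varLambda_{\mathtt{M},\mathtt{N}}$: this produces a weighted sum of terms $\tilde{E}^n_X(Q,W+V',\varLambda_{\mathtt{M},\mathtt{N}})$ indexed by $V'\in\varXi_{\mathtt{M},\mathtt{N}}/\varXi_{\mathtt{M},\mathtt{N}}\cap\varLambda_{\mathtt{M},\mathtt{N}}$, with the index $[\Lambda_{\mathtt{M},\mathtt{N}}:\Xi_{\mathtt{M},\mathtt{N}}\cap\Lambda_{\mathtt{M},\mathtt{N}}]_\mathcal{O}=\mathfrak{g}$ controlling the denominator. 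The assumption $(Q,U,\varXi)\in\mathbf{G}_{n,\p,\varLambda}$ ensures each translate $W+V'$ satisfies the hypotheses of \Cref{Main-cong-Lambda-pq}, delivering both the $\p$-integrality of $\tilde{E}^n_X(Q,W+V',\varLambda_{\mathtt{M},\mathtt{N}})$ and its leading-term congruence modulo $\pi^{\min(\mathtt{M},\mathtt{N})-\delta'}$. Re-summing over $V'$ and $W$ yields the claimed identity after multiplying by $\mathfrak{g}$.

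The main technical obstacle is the careful bookkeeping of the $\mathfrak{g}$-factor together with $\delta$. Naively, the denominator $N_{K/\Q}(\mathfrak{g})$ introduced by the second distribution step has $\p$-valuation $v_\p(\mathfrak{g})+v_{\overline{\p}}(\mathfrak{g})$, exceeding what a multiplication by $\mathfrak{g}$ can absorb; the missing $v_{\overline{\p}}(\mathfrak{g})$-part must arise from character-orthogonality cancellations within the sum $\sum_{V'} e(-w|v'^*)(\cdots)$, and making this cancellation explicit is the key delicate point. In addition, one must verify that a single $\delta=\delta(Q,U,\varXi,\varLambda)$ can be chosen to dominate all the $\delta'$ uniformly and independently of the auxiliary $\varUpsilon$; this follows because the set of $V'$ that appears depends only on $\varXi$ and $\varLambda$, while the orders of $W+V'$ in $K^{2n}/\varLambda$ for $W\in X=U+\varXi$ and the $\p$-valuations of the coefficients of $Q$ are intrinsic data of $(Q,U,\varXi,\varLambda)$.
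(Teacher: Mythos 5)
Your reduction to the case of $\varXi_{\mathtt{M},\mathtt{N}}$ via \eqref{dist-gen-xi-upsilon} and \eqref{sum-of-sum-upsilon-xi} matches the paper, but the decisive step afterwards has a gap that you flag yourself and do not close. Passing from $\varXi_{\mathtt{M},\mathtt{N}}$ to $\varLambda_{\mathtt{M},\mathtt{N}}$ by a raw application of \Cref{defin-space-dist}~\eqref{prop-dist-main-1} introduces the denominator $[\Lambda_{\mathtt{M},\mathtt{N}}:\Xi_{\mathtt{M},\mathtt{N}}\cap\Lambda_{\mathtt{M},\mathtt{N}}]=N_{K/\Q}(\mathfrak{g})=\mathfrak{g}\overline{\mathfrak{g}}$, and multiplying by $\mathfrak{g}$ leaves the $\overline{\mathfrak{g}}$-part, whose $\p$-valuation is $v_{\overline{\p}}(\mathfrak{g})$ and need not vanish under the hypotheses of the proposition. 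Your appeal to ``character-orthogonality cancellations'' in $\sum_{V'}e(-w|v'^*)(\cdots)$ is not an argument: by the distribution relation that weighted sum \emph{equals} $N_{K/\Q}(\mathfrak{g})\cdot e(\cdots)\tilde{E}^n(Q,W,\varXi_{\mathtt{M},\mathtt{N}})$, so the divisibility you need is equivalent to the $\p$-integrality of $\mathfrak{g}\cdot\tilde{E}^n_X(Q,W,\varXi_{\mathtt{M},\mathtt{N}})$ itself — i.e.\ to the statement being proven. Nothing in \Cref{Main-cong-Lambda-pq} or in the one-variable results supplies this cancellation, so the argument is circular at exactly the point you identify as delicate. (There is also a smaller unverified point: \Cref{Main-cong-Lambda-pq} is normalized with respect to $U+\varLambda$, not $X=U+\varXi$, so even term-by-term you owe a renormalization factor, and the hypotheses $W+V'+\p^{-\infty}\varLambda\subset K^{\times 2n}$, $Q(\overline{W+V'})\in\O_{(\p)}$ must be checked for the translates.)

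The paper closes this gap by never creating the $\overline{\mathfrak{g}}$ denominator. Writing $[\Lambda:\Lambda\cap\Xi]_\O=\mathfrak{c}\overline{\p}^a$ and $[\Xi:\Lambda\cap\Xi]_\O=\mathfrak{d}\p^b$, it takes $\varTheta=\varLambda_{a+\mathtt{M},\,b+\mathtt{N}}$ — a rescaled $\varLambda$, with exponents shifted by $a,b$ so that $\varTheta$ refines $\varXi\cap\varXi_{\mathtt{M},\mathtt{N}}$ and \Cref{extra-prop-dist-13} applies with a genuine bijection (hence no new index in the denominator). It then switches normalization via the identity $\tilde{E}^n_X(Q,W,\varTheta)=T\cdot\tilde{E}^n_{V+\varLambda}(Q,W,\varTheta)$ with $T=[\Theta:\Lambda\cap\Theta]/[\Theta:\Xi\cap\Theta]$, applies \Cref{Main-cong-Lambda-pq} to the $\varLambda$-normalized terms, and proves separately that $T\cdot\mathfrak{g}$ is $\p$-integral. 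That last point is where the $\overline{\p}$-part is controlled: by construction $c\overline{\pi}^{a+\mathtt{M}}\Lambda\subset\Xi\cap\Theta\subset\Theta$, so $[\Theta:\Xi\cap\Theta]_\O$ is prime to $\overline{\p}$, and the divisibility $[\Theta:\Xi\cap\Theta]_\O\mid[\Theta:\Lambda\cap\Theta]_\O[\Lambda:\Lambda\cap\Xi]_\O$ (a multiplicativity identity for module indices) does the rest. If you want to salvage your route, you would have to import this divisibility-and-renormalization mechanism in place of the hoped-for cancellation.
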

\begin{rem}
If  $q(X),q^*(X)\in \mathcal{O}_{(\p)}[X]$, then the above $\delta$ would not depend on $Q$.
\end{rem}

\begin{proof}
According to \eqref{sum-of-sum-upsilon-xi} it is enough to prove that, for every $V\in X$,
\begin{equation}\label{F-k-upsilon}
\mathfrak{g}\cdot 
\tilde{E}^n_{X}(Q,V,\varXi_{\mathtt{M},\mathtt{N}})
\,\equiv \, 
\mathfrak{g}\cdot
Q(\,\overline{V}\,)
  \frac{\eta_{\p}^{\deg q}}
  {  \Omega_{\p}^{\deg q^*}    } 
\tilde{E}^n_{X}(V,\varXi_{\mathtt{M},\mathtt{N}})
\pmod{\pi^{\min\{ \mathtt{M}, \mathtt{N}\}-\delta}\widehat{\mathcal{O}}}
\end{equation}
 where $\delta=\delta(Q,U,\varXi,\varLambda)$ is a nonnegative integer independent of $V$ and $\varUpsilon$.

Let $[\Lambda:\Lambda\cap\Xi]_{\mathcal{O}}=\mathfrak{c}\overline{\p}^a$
and
 $[\Xi:\Lambda\cap\Xi]_{\mathcal{O}}=\mathfrak{d}\p^b$
for nonnegative integers $a$ and $b$ and  integral ideals $\mathfrak{c}$ and $\mathfrak{d}$ prime to $\overline{\p}$ and $\p$, respectively. 
Let $d,c\in \mathcal{O}$ such that $c\mathcal{O}=\mathfrak{c}^{ah}$
and
$d\mathcal{O}=\mathfrak{d}^h$, where $h$ is the class number of $K$. 
Note that
 \begin{equation}\label{simple contain}
 c\overline{\pi}^{a}\Lambda \subset \Xi
 \quad \text{and}\quad
 d\pi^{b}\,\Xi\subset \Lambda.
 \end{equation}
 Let  $\varTheta=\Theta\times \Theta^*$, where $\Theta=\Lambda_{a+\mathtt{M},b+\mathtt{N}}$. Then $ d\overline{\pi}^{a+\mathtt{M}}\pi^{-\mathtt{N}}\Xi
  \subset 
 \Theta
  \subset
   \overline{\pi}^{\mathtt{M}}   c^{-1} \pi^{-b-\mathtt{N}} \Xi$,
  which implies 
   \[
  d\overline{\pi}^{a+\mathtt{M}}\Xi
  \subset
     \Xi\cap \Theta
         \subset 
              \overline{\pi}^{\mathtt{M}}\Xi
   \quad \text{and}\quad
\overline{c}\overline{\pi}^{b+\mathtt{N}}\Xi^* 
         \subset
               \Xi^*  \cap  \Theta^*
               \subset \overline{\pi}^{\mathtt{N}} \Xi^* .
  \]
   
  Then  $\Theta$ satisfies the hypothesis of  \Cref{extra-prop-dist-13} (for $\varUpsilon=\varXi_{\mathtt{M},\mathtt{N}}$) and so
 \begin{equation}\label{F-ups-theta-dis}
  \tilde{E}^n_{X}(Q,V,\varXi_{\mathtt{M},\mathtt{N}})  =
  \sum_{
  \substack{
  W\in K^{2n}/ \varTheta\\
  W\equiv V\,\mathrm{mod}\,\varXi\cap  \varXi_{\mathtt{M},\mathtt{N}}
  }
  }
   \tilde{ E}^n_{X}(Q,W,\varTheta).
  \end{equation} 
  However, since $e(v-u|w^*-u^*)=1$, we have
\begin{equation}\label{suma-congruen-Xi-Lam}
  \tilde{ E}^n_{X}(Q,W,\varTheta)=
T\cdot
  \tilde{ E}^n_{V+\varLambda}(Q,W,\varTheta)
  \quad \text{where}\quad 
  T:= \frac{  [\Theta: \Lambda\cap \Theta]}
 { [ \Theta : \Xi\cap \Theta  ]  }.
\end{equation}

From the hypothesis \eqref{p-adic-tuple} we obtain first that
 $
 W
 +\p^{-\infty}\varLambda
 \subseteq
 K^{\times 2n}$. 
Then we also obtain that  $ Q(\overline{W})\in \mathcal{O}_{(\p)}$ 
 and, since $W\equiv V\in \varXi\cap \varXi_{\mathtt{M},\mathtt{N}}=\overline{\pi}^{\mathtt{M}} \Xi\times \overline{\pi}^{\mathtt{N}}\Xi^*$,  the  congruence $  Q(\,\overline{W}\,)
  \equiv 
    Q(\,\overline{V}\,) \mod{{\pi}^{\min\{\mathtt{M},\mathtt{N}\}}\mathcal{O}_{(\p)}}.$
   Therefore, by \Cref{Main-cong-Lambda-pq}, 
  \begin{equation}\label{cong-F-0-u-theta-alpha}
     \tilde{ E}^n_{V+\varLambda}(Q,W,\varTheta)
   \,  \equiv  \,
      Q(\,\overline{V}\,)  
   \frac{\eta_{\p}^{\deg q}}
  {  \Omega_{\p}^{\deg q^*}    } 
  \tilde{E}^n_{V+\varLambda}(W,\varTheta)
  \pmod{\pi^{\min\{\mathtt{M},\mathtt{N}\}-\delta}\widehat{\O}},
  \end{equation}
where $\delta$ 
   depends only on $(Q,U,\varXi)$ and $\varLambda$, but not on $\mathtt{M}$ or $\mathtt{N}$.
   
   It remains to show that the factor $T \cdot\mathfrak{g}$ is $\p$-integral, because once we multiply it to \eqref{cong-F-0-u-theta-alpha}, then adding over all $W$, and  applying \eqref{F-ups-theta-dis} and \eqref{suma-congruen-Xi-Lam} with $Q=(1,1)$ to the right hand-side of the congruence, the congruence \eqref{F-k-upsilon} will follow. To this end, note that it will be enough to show that
   \begin{equation}\label{divisibilit-modules-im}
   \text{
  $[ \Theta : \Xi\cap \Theta ]_{\mathcal{O}}$ 
  \quad divides\quad $[\Theta:\Lambda\cap \Theta]_{\mathcal{O}}[\Lambda:\Lambda\cap \Xi]_{\mathcal{O}}$
  }
   \end{equation}
since $[\Theta:\Xi\cap \Theta]_{\mathcal{O}}$ is prime to $\overline{\p}$; indeed, by \eqref{simple contain} we have $c\overline{\pi}^{a+\mathtt{M}}\Lambda=c\overline{\pi}^{a}\Lambda\cap \Theta\subset \Xi\cap \Theta\subset \Theta=\overline{\pi}^{a+\mathtt{M}}\pi^{-(b+\mathtt{N})}\Lambda$.
That
 \eqref{divisibilit-modules-im} holds, follows at once from the identity
\[
[ \Theta : \Xi\cap \Theta ]_{\mathcal{O}}
      [ \Xi\cap \Theta :\varLambda\cap \Xi\cap \Theta]_{\mathcal{O}}
=
[ \Theta : \varLambda\cap \Theta ]_{\mathcal{O}}
[\varLambda\cap \Theta : \varLambda\cap \Xi\cap \Theta]_{\mathcal{O}}.
\]
\end{proof}

\subsection{$\p$-adic  properties of $D$}
\label{padic  properties of D}

Suppose $(Q,U,\varXi)\in \mathbf{G}_n$. Let
$\varUpsilon\in \mathcal{W}_n$  such that for nonnegative integers $\mathtt{M}$ and $\mathtt{N}$ we have
\begin{equation}\label{p-adic-condition-Upsilon}
[\Upsilon:\Xi\cap \Upsilon]_{\mathcal{O}}=\p^{\mathtt{N}} 
\quad \text{and}\quad
[\Xi:\Xi\cap \Upsilon]_{\mathcal{O}}= \overline{\p}^{\mathtt{M}}.
\end{equation}
For $\sigma\in \mathrm{GL}_n(K)$ assume that 
$(Q,U,\varXi)\cdot \sigma\in   \mathbf{G}_{n,\p,\varLambda}$, i.e.,
\[
(U+\varXi)\sigma+\p^{-\infty}\varLambda
\ \subset\
K^{\times 2n}
\quad \text{and}\quad
Q(\,\overline{U+\varXi}\,)\subset \O_{(\p)}.
\]
\begin{prop}\label{arithmetic-cong-of-Dk}
Let $(Q,U,\varXi)\in \mathbf{G}_{n}$ and $\sigma\in \mathrm{GL}_n(K)$ such that
 $(Q,U,\varXi)\cdot \sigma\in   \mathbf{G}_{n,\p,\varLambda}$ and let
$
\mathfrak{g}:=[\Lambda: \Xi]_{\mathcal{O}}
[\Xi\sigma:\Lambda \cap  \Xi\sigma]_{\mathcal{O}}.
$
Then for every $V\in X$ we have that
\begin{equation}\label{Dk-upsio}
\mathfrak{g}\cdot
\tilde{D}_{X}(\sigma,Q,V,\varUpsilon)
\end{equation}
is $\p$-integral and, furthermore, it is congruent to
\[
\mathfrak{g} 
\cdot
\sum_{
\substack{
W\in K^{2n}/\varXi_{ \mathtt{M},\mathtt{N} }\\
     W \equiv V\,\mathrm{mod}\,\varXi \cap \varUpsilon
           }
     }
Q(\,\overline{W}\,)\cdot
\frac{\eta_\p^{\deg q}}{\Omega_\p^{\deg q^*}}
\tilde{D}_{X}(\sigma,W,\varXi_{\mathtt{M},\mathtt{N}} )
\]
modulo $\pi^{  \min\{\mathtt{M},\mathtt{N}\}  -\delta}\widehat{\O}$, where 
$\delta=\delta(\sigma,Q,U,\varXi,\varLambda)$ is a nonnegative integer that depends only on $(\sigma,Q,U,\varXi)$ and $\varLambda$.
\end{prop}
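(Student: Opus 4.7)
The key observation is that by the definition $D(\sigma)=\sigma E^n$ and the action formula \eqref{action-on-SE}, the rescaled distribution satisfies
\[
\tilde{D}(\sigma,Q,U,\varXi)=\det(\sigma)\,\tilde{E}^n(Q\sigma,U\sigma,\varXi\sigma),
\]
since the factor $\eta_\infty^{\deg q}/\Omega_\infty^{\deg q^*+n}$ from \Cref{rescaling-periods-distri} is preserved by the $\sigma$-action ($\deg(q\sigma)=\deg q$ and $\deg(q^*\sigma^*)=\deg q^*$). Writing $(Q',U',\varXi'):=(Q,U,\varXi)\cdot\sigma$, $V':=V\sigma$, and $\varUpsilon':=\varUpsilon\sigma$, the plan is to reduce \Cref{arithmetic-cong-of-Dk} to \Cref{p-adic-En} applied to this primed data. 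The hypotheses of \Cref{p-adic-En} are satisfied: the assumption $(Q,U,\varXi)\cdot\sigma\in\mathbf{G}_{n,\p,\varLambda}$ is exactly $(Q',U',\varXi')\in\mathbf{G}_{n,\p,\varLambda}$, and the $\sigma$-action preserves module indices so that $[\Upsilon':\Xi'\cap\Upsilon']_{\mathcal{O}}=\p^{\mathtt{N}}$ and $[\Xi':\Xi'\cap\Upsilon']_{\mathcal{O}}=\overline{\p}^{\mathtt{M}}$. Thus \Cref{p-adic-En} yields the $\p$-integrality of $\mathfrak{g}'\,\tilde{E}^n_{X'}(Q',V',\varUpsilon')$, where $\mathfrak{g}':=[\Lambda:\Lambda\cap\Xi\sigma]_{\mathcal{O}}$, together with the congruence modulo $\pi^{\min\{\mathtt{M},\mathtt{N}\}-\delta'}\widehat{\O}$ involving the sum over $W'\in K^{2n}/\varXi'_{\mathtt{M},\mathtt{N}}$.

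Next I will translate between the $D$- and $E^n$-sides. Combining the displayed identity with the invariance $e(-u|v^*-u^*)=e(-u'|v'^{*}-u'^{*})$ (via $e(xA|x^*A^*)=e(x|x^*)$) and the equality $[\Upsilon:\Xi\cap\Upsilon]=[\Upsilon':\Xi'\cap\Upsilon']$ of cardinalities gives $\tilde{D}_X(\sigma,Q,V,\varUpsilon)=\det(\sigma)\,\tilde{E}^n_{X'}(Q',V',\varUpsilon')$. From $(c\Xi)^*=\overline{c}^{-1}\Xi^*$ one checks that $\varXi'_{\mathtt{M},\mathtt{N}}=\varXi_{\mathtt{M},\mathtt{N}}\cdot\sigma$, so $W\mapsto W\sigma$ is a bijection between $K^{2n}/\varXi_{\mathtt{M},\mathtt{N}}$ and $K^{2n}/\varXi'_{\mathtt{M},\mathtt{N}}$ that preserves the congruence condition (since $(\varXi\cap\varUpsilon)\sigma=\varXi'\cap\varUpsilon'$). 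Finally, from $qA(X)=q(X\overline{A}^{-1})$ one obtains $Q'(\overline{W\sigma})=Q(\overline{W})$ with $\deg q'=\deg q$ and $\deg q'^*=\deg q^*$, so the Euler-type factor $Q(\overline{W})\,\eta_\p^{\deg q}/\Omega_\p^{\deg q^*}$ in \Cref{arithmetic-cong-of-Dk} matches exactly the corresponding factor on the $E^n$-side of \Cref{p-adic-En}.

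The only genuinely arithmetic step is the index identity $\mathfrak{g}'=\det(\sigma)\,\O\cdot\mathfrak{g}$ as fractional ideals. By multiplicativity of the module index and the fact that $[\Xi:\Xi\sigma]_{\mathcal{O}}=\det(\sigma)\O$,
\[
\mathfrak{g}'=[\Lambda:\Lambda\cap\Xi\sigma]_{\mathcal{O}}=[\Lambda:\Xi\sigma]_{\mathcal{O}}\cdot[\Xi\sigma:\Lambda\cap\Xi\sigma]_{\mathcal{O}}=[\Lambda:\Xi]_{\mathcal{O}}\det(\sigma)\O\cdot[\Xi\sigma:\Lambda\cap\Xi\sigma]_{\mathcal{O}}=\det(\sigma)\O\cdot\mathfrak{g}.
\]
Consequently, for every $c\in\mathfrak{g}$ the element $c\det(\sigma)$ lies in $\mathfrak{g}'$; multiplying both conclusions of \Cref{p-adic-En} by such $c$ and re-expressing them via the dictionary above yields the required integrality and congruence for $\tilde{D}_X(\sigma,\cdot)$, with $\delta:=\delta'(Q',U',\varXi',\varLambda)$, which depends only on $(\sigma,Q,U,\varXi,\varLambda)$. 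The main obstacle is purely bookkeeping: verifying that every piece of data (the $e$-pairings, the cardinalities $[\Upsilon:\Xi\cap\Upsilon]$, the polynomial evaluations, the degrees, and the lattices $\varXi_{\mathtt{M},\mathtt{N}}$) transforms compatibly under the $\sigma$-action, so that the only nontrivial twist reduces to the elementary index identity displayed above.
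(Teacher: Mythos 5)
Your proposal is correct and follows essentially the same route as the paper: both reduce the statement to \Cref{p-adic-En} by unwinding the $\sigma$-action on the distribution, checking that the pairing factors, cardinalities, polynomial evaluations, and the lattices $\varXi_{\mathtt{M},\mathtt{N}}$ all transport compatibly, and both hinge on the same index identity $[\Lambda:\Lambda\cap\Xi\sigma]_{\mathcal{O}}=[\Lambda:\Xi]_{\mathcal{O}}\,\det(\sigma)\mathcal{O}\,[\Xi\sigma:\Lambda\cap\Xi\sigma]_{\mathcal{O}}$, with the same choice $\delta(\sigma,Q,U,\varXi,\varLambda)=\delta\big((Q,U,\varXi)\cdot\sigma,\varLambda\big)$.
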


\begin{proof}
Bearing in mind that $[\Upsilon:\Upsilon\cap \Xi]=[\Upsilon\sigma:\Upsilon\sigma\cap \Xi\sigma]$, we can express \eqref{Dk-upsio} as
\[
[\Lambda: \Xi]_{\mathcal{O}}
[\Xi\sigma:\Lambda \cap  \Xi\sigma]_{\mathcal{O}}\cdot\frac{\det(\sigma)}{[\Upsilon\sigma:\Upsilon\sigma\cap \Xi\sigma]}
\tilde{E}^n(\sigma^tQ,(U+V)\sigma,\varUpsilon\sigma).
\]
We now see that we can apply \Cref{p-adic-En} to $(\sigma^t Q,(U+V)\sigma,\varUpsilon\sigma)$, bearing in mind that 
\begin{align*}
&[\Lambda: \Xi]_{\mathcal{O}}
[\Xi\sigma:\Lambda \cap  \Xi\sigma]_{\mathcal{O}}\det(\sigma)\\
&=[\Lambda: \Xi]_{\mathcal{O}}
[\Xi:\Xi\sigma]_{\mathcal{O}}
[\Xi\sigma:\Lambda \cap  \Xi\sigma]_{\mathcal{O}}\\
&=
[\Lambda:\Lambda\cap \Xi\sigma]_{\mathcal{O}}.
\end{align*}
Finally, we  take $\delta(\sigma,Q,U,\varXi,\varLambda)$ to be $\delta(\sigma^tQ,U\sigma,\varXi\sigma,\Lambda)$. 
\end{proof}

\begin{rem}\label{D-is-measure-rem}
From \Cref{p-int-E-u-v-gen}, \eqref{hom-sigma-D} and \eqref{F-ups-theta-dis}, we can prove the following:
For any $\sigma\in \mathrm{GL}_n(K)$, $U\in K^{2n}$ and $\varXi\in \mathcal{W}_n$, there exists a nonnegative integer $\mathtt{S}=\mathtt{S}(\sigma,U,\varXi)$ such that
\[
\pi^{\mathtt{S}}
\cdot
\tilde{D}_{X}(\sigma,V,\varUpsilon)
\]
is $\p$-integral for any $V\in X=U+\varXi$ and $\Upsilon\in \mathcal{W}_n$ satisfying \eqref{p-adic-condition-Upsilon}, where  $\mathtt{M}(U)$ and $\mathtt{N}(U)$ are nonnegative integers such that 
$(U+\varXi)\overline{\pi}^{\mathtt{M}(U)}\pi^{-\mathtt{N}(U)}\subset \varLambda$
\end{rem}


\section{The smoothed cocycle  and its integrality properties}
\label{The smoothed cocycle and its integrality properties}

In this section, we introduce the smoothing operation for distributions and use it to recall the definition of the (smoothed) Eisenstein-Kronecker cocycle $\Phi_\l$. In Section \ref{Integrality properties of psi-lk}, we prove the integrality and congruence properties of $\Phi_\l$. As a corollary, we recover the integrality of the critical values of partial $L$-functions in \Cref{Integrality of the special values}.

\subsection{The smoothed distributions}
\label{The smoothed distributions}
Let $\l$ by a prime ideal of $K$.
 For a prime ideal $\mathfrak{p}$ of $K$ let $\O_{(\p)}$ denote the localization of $\O$ at $\p$, i.e., the ring of fractions $a,b\in \mathcal{O}$ with $b\notin \p$.
For a nonzero integral ideal $\mathfrak{a}$ of $K$ we define $\mathcal{O}_{(\aa)}=\cap _{\p\mid \mathfrak{a}}\mathcal{O}_{(\p)}$ and
\[
 \mathcal{O}\left[\frac{1}{\mathfrak{a}}\right]:=
\cap_{\p\nmid \mathfrak{a}}\mathcal{O}_{(\p)},
\]
i.e., this is the ring of fractions $a/b$ in $K$ such that $b\mathcal{O}$ divides a  nonnegative power of 
 $\mathfrak{a}$.
 
Let $W_{n,\l}$ be the set of $\mathcal{O}$-modules $\Xi$ in $W_n$ such that
$
 \Xi \otimes_{\mathcal{O}_{K}}\mathcal{O}_{(\l)}=\mathcal{O}_{(\l)}^n.$ 
 This is equivalent to assuming that  $[\Xi:\Xi\cap \mathcal{O}^n]_{\mathcal{O}}$ and $[\mathcal{O}^n:\mathcal{O}^n\cap \Xi]_{\mathcal{O}}$ are prime to $\l$.
For such a $\Xi$ we define the $\mathcal{O}$-module 
$
\Xi_{\l} :=\varXi\cap (\l\mathcal{O}_{(\l)}\times \mathcal{O}_{(\l)}^{n-1}),
$
i.e., $\Xi_{\l}=  \{(x_1,\dots, x_n)\in \Xi: x_1\in \l\mathcal{O}_{(\l)}  \}$. 
Note that 
$
[\Xi:\Xi_\l]_{\mathcal{O}}=\l $
and
if $\Upsilon\in W_{n,\l}$,  which implies $[\Xi: \Xi \cap \Upsilon ]_{\mathcal{O}} $ being prime to $\l$, then
$(\Xi\cap \Upsilon)_{\l}=\Xi_\l \cap \Upsilon_\l=\Xi_\l \cap \Upsilon.$
Furthermore, we have
\begin{equation}\label{l-properties}
\text{
$
[\Xi:\Upsilon]_{\mathcal{O}}=[\Xi_\l:\Upsilon_\l]_{\mathcal{O}}
$
and if $\Upsilon \subseteq \Xi$, then $\Xi_\l/\Upsilon_\l \xrightarrow{\sim}\Xi/\Upsilon$.
}
\end{equation}

Let $\Gamma_\l$ be the group of automorphims of $\l\mathcal{O}_{(\l)}\times \mathcal{O}_{(\l)}^{n-1}$, i.e.,
\begin{equation*}
\Gamma_{\mathfrak{l}}:
=
\Gamma_0(\l\mathcal{O}_{(\l)})
=\{A\in \mathrm{GL}_n(\O_{(\l)}): A\equiv
\begin{pmatrix}
\ast	& \ast		& \ast \\ 
0 & \ast	&\ast \\
\vdots					& \vdots	& \vdots \\
0	& \ast		& \ast
\end{pmatrix} \ \mathrm{mod}\, \mathfrak{l}\}.
\end{equation*}
For $\Xi\in W_{n,\l}$ and $A\in \Gamma_\l$, we have that $\Xi A\in W_{n,\l}$ 
 and also $(\Xi A)_{\l}=\Xi_\l A$. 

Let $\mathcal{K}_{n,\l}:=(\l\mathcal{O}_{(\l)}\times \mathcal{O}_{(\l)}^{n-1})\times K^{n}$.
For $\varXi=\Xi\times\varXi^*$ with $\Xi\in W_{n,\l}$, let 
\[
\varXi_\l=\varXi\cap\
\mathcal{K}_{n,\l}=\Xi_\l\times \Xi^*
\]
and $\mathcal{W}_{n,\l}:=\{\varXi_\l: \varXi=\Xi\times\Xi^*,\ \Xi\in W_{n,\l}\}$. We define 
\[
\mathbf{G}_{n}^{\l}=H^2\times \mathcal{K}_{n,\l} \times \mathcal{W}_{n,\l}
\]
with the right action of $A\in \Gamma_\l$ given by \eqref{Action-Gln-tuple} and scalar multiplication
by a $t\in K^{\times}$ relatively prime to $\l$ given by \eqref{scalar-mult-tuple}.
Moreover, if $\mathbf{H}_n\subseteq \mathbf{G}_n$ is as in \Cref{The space of distributions}, we let 
$\mathbf{H}_n^{\l}:=\{(Q,U,\varXi_\l)\in\mathbf{G}_{n}^{\l}\,:\,   (Q,U,\varXi)\in\mathbf{H}_n\}$.

\begin{defi}\label{smoothed-distrib}
We denote by
$\mathcal{D}_S(\mathbf{H}_{n}^{\l})$ the $K$-vector space  of maps
$
\phi: \mathbf{H}_{n}^{\l} \to S
$ that are $K$-linear in the $Q$ component and
such that for every $(Q,U,\varXi)\in \mathbf{H}_{n}^{\l}$ the following holds
\begin{enumerate}[(i)]
\item $ \phi(Q,U+V,\varXi_\l)=e(u|v^*)\phi(Q,U,\varXi_\l)$ for all $V=(v,v^*) \in \varXi_{\l}$,
\item \label{homgeneity-phi-2} $\phi(Q,U,\varXi_\l)=t^n\phi\big(\,(Q,U,\varXi_\l)\cdot t\big)$ for all $t\in K^{\times}$ relatively prime to $\l$, and
\item \label{distri-prop-iii-smooth} for any $\varUpsilon_\l=\Upsilon_\l\times \Upsilon^*\in \mathcal{W}_{n,\l}$ we have
\[
\phi(Q,U,\Xi_\l)=\frac{1}{[\Upsilon_\l:\Xi_\l\cap \Upsilon_\l]}
\sum_{V\in  \varXi_\l/ \varXi_\l\cap \varUpsilon_{\l}}
e(-u|v^*)\phi(Q,U+V,\varUpsilon_\l).
\]
\end{enumerate}
If $A\in\Gamma_\l$  and $\phi\in \mathcal{D}_S(\mathbf{H}_{n}^{\l})$ 
we define the distribution $A\phi\in\mathcal{D}((\mathbf{H}_{n}^{\l}A,S)$ by
\begin{equation}\label{action-gamma-l}
A
\phi(Q,U,\varXi_\l)=
\det(A)\phi\big((Q,U,\varXi_\l)\cdot A \big).
\end{equation}
In particular, if $\mathbf{H}_{n}^{\l}$ is invariant under  $\Gamma_\l$, then this defines a left action
on   $\mathcal{D}_S(\mathbf{H}_{n}^{\l})$.
\end{defi}

\begin{rem}
Recalling the notation from \Cref{notation-dis-Gn}, i.e.,
\[
\phi_{U+\varXi_\l}(Q,V,\varUpsilon_\l)
:=
\frac{e(-u|v^*-u^*)}{[\Upsilon_\l:\Xi_\l\cap \Upsilon_\l]}
\phi(Q,V,\varUpsilon_\l).
\]
Note that by \eqref{l-properties} we  have $[\Upsilon_\l:\Xi_\l\cap \Upsilon_\l]=[\Upsilon:\Xi\cap \Upsilon]=[\Upsilon^\l:\Xi^\l\cap \Upsilon^\l]$.
Also, note that if $V\in U+\varXi_\l$, the value $\phi_{U+\varXi_\l}(Q,V,\varUpsilon)$ is independent of the class
of $V$ in $\mathcal{K}_{n,\l}/\varUpsilon_\l$.
Furthermore, when $Q=(1,1)$, we remove the letter from the notation and simply write $\phi(U,\varXi_\l)$, $\phi_{U+\varXi_\l}(V,\varUpsilon_\l)$, etc.
\end{rem}

With this notation, we can rewrite \eqref{distri-prop-iii-smooth} as
\[
\phi(Q,U,\Xi_\l)
=
\sum_{
\substack{
V\in \mathcal{K}_{n,\l}/  \Upsilon_{\l} \\
V\equiv U \,\mathrm{mod}\, \varXi_\l
}
}
\phi_{U+ \varXi_\l}(Q,V,\varUpsilon_\l).
\]
and, just as in \Cref{extra-prop-dist-13}, we can deduce the following identity.

\begin{lem}\label{lemma-decomp-phi-Xi-Upsilon}
Let $\varTheta=\Theta\times \Theta^*$
as in \Cref{extra-prop-dist-13}  with $\Theta\in W_{n,\l}$ . 
Then for any
$V\in U+\varXi_{\l}$,
\begin{equation*}\label{phi-Xi-upsioon}
\phi_{U+ \varXi_\l}(Q,V,\varUpsilon_\l)
=
\sum_{
\substack{
W\in \mathcal{K}_{n,\l}/\varTheta_\l \\
W\equiv V\,\mathrm{mod}\, \varXi_\l\cap \varUpsilon_{\l}
}
}
\phi_{U+ \varXi_\l}(Q,W,\varTheta_\l).
\end{equation*}
\end{lem}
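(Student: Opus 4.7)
The plan is to mirror the proof of \Cref{extra-prop-dist-13}, replacing each lattice by its $\l$-smoothed version and keeping track of the fact that the smoothing operation is compatible with intersections, module indices, and quotients. The key compatibility is \eqref{l-properties}, which says $[\Xi:\Upsilon]_{\mathcal{O}}=[\Xi_\l:\Upsilon_\l]_{\mathcal{O}}$ and $\Xi_\l/\Upsilon_\l\xrightarrow{\sim}\Xi/\Upsilon$ whenever $\Upsilon\subseteq \Xi$ in $W_{n,\l}$, together with the observation (from the paragraph preceding \eqref{l-properties}) that $(\varXi\cap \varUpsilon)_\l=\varXi_\l\cap \varUpsilon_\l$ and similarly for $\varTheta$.

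First, I would unfold the definition of $\phi_{U+\varXi_\l}$ and apply property \eqref{distri-prop-iii-smooth} of \Cref{smoothed-distrib} to $\phi(Q,V,\varUpsilon_\l)$, with $\varTheta_\l$ playing the role of the auxiliary lattice, obtaining
\[
\phi_{U+\varXi_\l}(Q,V,\varUpsilon_\l)=\frac{e(-u|v^*-u^*)}{[\Upsilon_\l:\Xi_\l\cap \Upsilon_\l]\,[\Theta_\l:\Upsilon_\l\cap \Theta_\l]}\sum_{Z\in \varUpsilon_\l/\varUpsilon_\l\cap \varTheta_\l}e(-v|z^*)\,\phi(Q,V+Z,\varTheta_\l).
\]
Next I would reindex the sum. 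The hypothesis of \Cref{extra-prop-dist-13} supplies the bijection $\varXi\cap \varUpsilon/\varXi\cap \varTheta\xrightarrow{\sim}\varUpsilon/\varUpsilon\cap \varTheta$; by \eqref{l-properties} applied to each of these rank-$n$ lattices (all of which lie in $W_{n,\l}$ because $W_{n,\l}$ is closed under intersection with elements of $W_{n,\l}$), this induces the smoothed bijection $\varXi_\l\cap \varUpsilon_\l/\varXi_\l\cap \varTheta_\l\xrightarrow{\sim}\varUpsilon_\l/\varUpsilon_\l\cap \varTheta_\l$. Writing $Z=W-V$ with $W\in \mathcal{K}_{n,\l}/\varTheta_\l$ satisfying $W\equiv V\mod \varXi_\l\cap \varUpsilon_\l$, the sum over $Z$ becomes a sum over such $W$.

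Finally, I would collect the exponential factors, using $e(-v|z^*)=e(-v|w^*-v^*)$ together with $e(-u|v^*-u^*)\,e(-v|w^*-v^*)=e(-u|w^*-u^*)\,e(u-v|w^*-v^*)$, and observe that the second factor equals $1$ because $v-u\in \varXi_\l$ and $w-v\in \varXi_\l\cap \varUpsilon_\l\subset \varXi_\l$. The remaining normalization $[\Upsilon_\l:\Xi_\l\cap \Upsilon_\l]\,[\Theta_\l:\Upsilon_\l\cap \Theta_\l]$ collapses to $[\Theta_\l:\Xi_\l\cap \Theta_\l]$ by the $\l$-smoothed analogue of \eqref{iden-mod-inex-upsi-Xi-theta}, which is immediate from \eqref{l-properties}. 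The resulting expression is exactly $\sum_W \phi_{U+\varXi_\l}(Q,W,\varTheta_\l)$, proving the lemma.

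The main (and essentially only) technical point is checking the bookkeeping: one must verify that the bijection of \eqref{bij-map-dist-123-23} and the module-index identities genuinely pass through the smoothing operation; once \eqref{l-properties} is invoked systematically this is automatic, so the proof is really a transcription of the argument for \Cref{extra-prop-dist-13}.
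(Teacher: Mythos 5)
Your proposal is correct and follows exactly the paper's route: the paper's proof of this lemma is the one-line observation that the argument of \Cref{extra-prop-dist-13} goes through verbatim once \eqref{l-properties} is used to transport the bijection \eqref{bij-map-dist-123-23} and the module-index identities to the $\l$-smoothed lattices. You have simply written out in full the bookkeeping that the paper leaves implicit.
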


\begin{proof}
The same proof as in \Cref{extra-prop-dist-13} works, applying
\eqref{l-properties} to the bijection \eqref{bij-map-dist-123-23}.
\end{proof}

We finish this section by defining the smoothing operation of a distribution.

\begin{defi}\label{smoothing-op}
We define the smoothing operator $\mathcal{D}_S(\mathbf{H}_{n})\to  \mathcal{D}_S(\mathbf{H}_{n}^{\l}) $ : $\phi\to \phi_\l$
where $\phi_\l$ is given  by
\[
\phi_{\l}(Q,U,\varXi_\l):=
\phi(Q,U,\varXi^{\l})
-\phi(Q,U,\varXi)
\]
for $(Q,U,\varXi_\l)\in \mathbf{H}_{n}^{\l}$, where $\varXi^{\l}:=\Xi_\l\times (\Xi_\l)^*$.
\end{defi}

From \Cref{defin-space-dist}  \eqref{prop-dist-main-1}, we can express $\phi_{\l}$
in terms of $\phi$ and $\varXi^\l$ follows
\begin{equation}\label{smooth-dedek-decomp-F}
\phi_{\l}(Q,U,\varXi_\l)
=\sum_{
\substack{U_0\in \varXi/ \varXi_\l \\ U_0\notin \varXi_\l} }
-e(-u|u_0^*)\phi(Q,U+U_0,\varXi^\l).
\end{equation}

\subsection{The smoothed Eisenstein-Kronecker cocycle}
\label{The smoothed Eisenstein-Kronecker cocycle}

We define  the smoothed Dedekind sum $D_\l$ by $D_\l(\sigma)=D(\sigma)_\l$ for $\sigma\in M_n(\mathcal{O}_{(\l)})$. Note that, in particular, if $\sigma\in \Gamma_\l$ then according to the observations in \Cref{The smoothed distributions} we have that $(\Xi\sigma)_\l=\Xi_\l\sigma$ and so $D_\l(\sigma)=\sigma E_\l^n$.
Similarly, we define $D_{\l}(s)$, $\Psi_\l(s)$ and $\Phi_\l(s)$, respectively, the smoothing
of  $D(s)$, $\Psi(s)$ and $\Phi(s)$.

From \Cref{The Eisenstein cocycle} we have that $\Psi_\l(s), \Phi_\l(s):\Z[\Gamma_\l^n]\to \mathcal{D}_\C(\mathbf{G}_{n,k}^{\l})$ represent a cohomology class in $H^{n-1}\big(\Gamma_\l, \mathcal{D}_\C(\mathbf{G}_{n,k}^{\l})\big)$ for all $\mathrm{Re}(s)>1+\frac{k}{2}$. Furthermore, $\Phi_\l=\Phi_\l(0)$ represents a cohomology class in 
$H^{n-1}\big(\Gamma_\l,\mathcal{D}_S(\mathbf{H}_{n}^{\l})\big)$ 
where $\mathbf{H}_{n}^{\l}=\cup_{k\geq 0} \mathbf{G}_{n,k}^{\l}$ for $n=1,2$
and $\mathbf{H}_{n}^{\l}=\mathbf{G}_{n,0}^{\l}$ for $n\geq 3$.

In \cite{BCG2}, it is proven that the map $\a\mapsto D_\l(\sigma(1))$ in $C^{n-1}(\Gamma_\l,\mathcal{D}_\C(\mathbf{G}_{n}^{\l}))$
 satisfies the cocycle property, thus it  represents a cohomology class in 
$H^{n-1}\big(\Gamma_\l,\mathcal{D}_\C(\mathbf{G}_{n}^{\l})\big)$. Since, by \Cref{identity-colmez}, this map coincides with $\Phi_\l$, whenever $\Phi_\l(s)$ has analytic continuation at $s=0$, we will also denote it by $\Phi_\l$, i.e.,
$\Phi_\l(\mathfrak{A})=D_\l(\sigma(1))$.

\begin{cor}\label{l-smoohted-in-terms-of-smoothed-Dedekind sums}
For all $\mathrm{Re}(s)>1+\frac{k}{2}$ we have that $\Psi_\l(s)=\Phi_{\l}(s)$
on  $\mathbf{G}_{n,k}^\l$. Furthermore, this implies
$
\Psi_{\l}  =\Phi_{\l},
$
whenever these cocycles have analytic continuation at $s=0$.
\end{cor}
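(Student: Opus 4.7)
The plan is to reduce the identity $\Psi_\l(s)=\Phi_\l(s)$ to an elementary observation about how matrices in $\Gamma_\l$ act on the first coordinate modulo $\l$. I would work throughout in the half-plane $\mathrm{Re}(s)>1+\tfrac{k}{2}$ of absolute convergence of all series involved; the case $s=0$ (when both cocycles extend analytically) will then follow by analytic continuation.

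The first step is to express $\Phi$ explicitly as a Dirichlet series. Using the action formula \eqref{action-on-SE} together with the substitution $y=x\sigma(1)$ in the series defining $F(s)$, one computes that whenever $\det\sigma(1)\ne 0$,
$\Phi(s,\mathfrak{A})(Q,U,\varXi)=\sum_{x\in\Xi+u,\ \langle x,A_{k1}\rangle\ne 0\ \forall k} e(x|u^*)\,f(\sigma(1))(q^*,x)\,\overline{N_M(x)}^k/|N_M(x)|^{2s}$.
On this non-degenerate set, every index $j_k$ in the definition of $\psi$ equals $1$, so $\psi(\mathfrak{A})(q^*,x)=f(\sigma(1))(q^*,x)$; hence the difference $\Psi-\Phi$ is supported on the \emph{degenerate} set $(\Xi+u)^{\mathrm{deg}}:=\{x\in\Xi+u:\exists k,\ \langle x,A_{k1}\rangle=0\}$. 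The case $\det\sigma(1)=0$ is handled uniformly, since then $\Phi=0$ by definition of $D(s,\cdot)$ while $f(\sigma(1))\equiv 0$ makes the non-degenerate contribution to $\Psi$ vanish as well.

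The second, crucial step is the set-theoretic lemma: for $\mathfrak{A}\in\Gamma_\l^n$ and $(Q,U,\varXi_\l)\in\mathbf{G}_{n,k}^\l$, the sets $(\Xi+u)^{\mathrm{deg}}$ and $(\Xi_\l+u)^{\mathrm{deg}}$ coincide. The definition of $\Gamma_\l$ forces each first column $A_{k1}=(a_{11}^{(k)},\dots,a_{n1}^{(k)})^t$ to satisfy $a_{i1}^{(k)}\in\l\mathcal{O}_{(\l)}$ for $i>1$ and, by invertibility of $A_k$ in $\mathrm{GL}_n(\mathcal{O}_{(\l)})$ modulo $\l$, $a_{11}^{(k)}\in\mathcal{O}_{(\l)}^\times$. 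For $x\in\Xi+u$ all coordinates lie in $\mathcal{O}_{(\l)}$ (since $\Xi\in W_{n,\l}$ and $U\in\mathcal{K}_{n,\l}$), so $\langle x,A_{k1}\rangle=0$ implies $a_{11}^{(k)}x_1=-\sum_{i>1}a_{i1}^{(k)}x_i\in\l\mathcal{O}_{(\l)}$, hence $x_1\in\l\mathcal{O}_{(\l)}$. Combined with $u_1\in\l\mathcal{O}_{(\l)}$ this yields $x-u\in\Xi_\l$.

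Combining both steps via the definition of $\l$-smoothing in \Cref{smoothing-op},
$(\Psi-\Phi)_\l(s,\mathfrak{A})(Q,U,\varXi_\l)=(\Psi-\Phi)(s,\mathfrak{A})(Q,U,\varXi^\l)-(\Psi-\Phi)(s,\mathfrak{A})(Q,U,\varXi)=0$,
because the two summands on the right are the same sum: both run over the common set $(\Xi_\l+u)^{\mathrm{deg}}=(\Xi+u)^{\mathrm{deg}}$ (by the lemma) with the same integrand $e(x|u^*)\psi(\mathfrak{A})(q^*,x)\cdot\overline{N_M(x)}^k/|N_M(x)|^{2s}$, which depends only on $x$ and not on the ambient lattice. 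The main obstacle I anticipate is the bookkeeping in the first step: keeping careful track of how the weight $\overline{N_M(x)}^k/|N_M(x)|^{2s}$ transforms under the $\sigma(1)$-action on $Q$, and folding both the invertible and the degenerate $\sigma(1)$ cases into a single formula for $(\Psi-\Phi)(s,\mathfrak{A})$.
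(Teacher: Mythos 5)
Your proposal is correct and follows essentially the same route as the paper: the paper decomposes $\Psi(s,\mathfrak{A})$ over Sczech's sets $X(d)$ and shows that for $d\neq(1,\dots,1)$ the sum over $\Xi+u$ equals the sum over $\Xi_\l+u$ (via the same congruence argument on the first columns of the $A_k$), while the $d=(1,\dots,1)$ piece is identified with $D(s,\sigma(1))$; your "degenerate versus non-degenerate" dichotomy is exactly the coarse version of that decomposition. The only cosmetic difference is that you avoid the full $X(d)$ partition by treating the entire degenerate locus at once, which is a mild streamlining rather than a different method.
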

\begin{proof}

 Fix $\mathfrak{A}=(A_1,\dots, A_n)\in \Gamma_{\mathfrak{l}}^n$ and let
  $(Q,U,\varXi)\in \mathbf{G}_{n,k}^\l$.
In order to do this, let $d=(d_1,\dots,d_n)$ be an $n$-tuple of integers with $1\leq d_j\leq n$, and let $\sigma(d)$
 denote the $n\times n$ matrix whose $i$th column is the $d_i$th column of $A_i$. Consider the space $A(d)\subset \C^n$ generated by all columns $A_{ij}$ with $j< d_i$, and let $A(d)^\perp$ be the orthogonal complement. Then define 
\begin{equation*}
\label{X(d)}
X(d)=A(d)^\perp\backslash\bigcup^n_{i=1} A^\perp_{id_i},
\end{equation*}
so that if $X(d)$ is nonempty, then it is a subspace of $\C^n$ with a finite number of codimension one subspaces removed. Let $D=D(A_1,\dots,A_n)=\{d:X(d)\neq \varnothing \}.$ Then by construction of $X(d)$, we can associate with $(A_1,\dots,A_n)$ a finite decomposition of $\C^n$ into a {\em disjoint} union
\begin{equation*}
\C^n\backslash\{0\}=\bigcup_{d\in D} X(d).
\end{equation*}

Therefore, the cocycle can be rearranged as
\begin{equation} \label{decom}
\Psi(s,\mathfrak{A},Q,U,\varXi)=\sum_{d\in D}\det(\sigma(d))
\sum_{ \mathbf{r} } q^*_{\mathbf{r}}(\sigma(d))
G_{k,\mathbf{r}}^s(\sigma(d),M,U,\varXi)
\end{equation}
where we are using the notation in \eqref{coeff-poly-P} and 
\begin{equation*}
G_{k,\mathbf{r}}^s(\sigma(d),M,U,\varXi)
   :=\sum_{x\in X(d)\cap (\Xi+u)} 
         e( x|u^*)\cdot 
             \prod_{j=1}^n 
                  \frac{r_j!}{\langle x,\sigma_j\rangle^{1+r_j}}
                  \cdot \frac{\overline{N_M(x)}^k}{|N_M(x)|^{2s}}.
                 \end{equation*}

 Then for $d\neq (1,\dots,1)$, we have $X(d)\cap( \Xi+u) = X(d)\cap (\Xi_{\mathfrak{l}}+u)$ for every  $u\in \Xi_{\l}\otimes_{\mathcal{O}}\mathcal{O}_{(\l)}=\l\mathcal{O}_{(\l)}\times \mathcal{O}_{(\l)}^{n-1}$.
Indeed, clearly 
$X(d)\cap( \Xi_{\mathfrak{l}}+u)\subset X(d)\cap (\Xi+u)$ since $\Xi_{\mathfrak{l}}+u \subset \Xi+u$. To show the reverse containment, let $i$ be the first entry of $d$ for which $d_i\neq 1$. For simplicity, assume $i=1$. Given 
$x+u \in X(d) \cap ( \Xi +u)$, we want to show that $x\in \l\mathcal{O}_{(\l)}\times \mathcal{O}_{(\l)}^{n-1}$. Observe that $x\in \Xi\subset \mathcal{O}_{(\l)}^{n}$, and
so $xA_1\in \mathcal{O}_{(\l)}^{n}$. By our assumption on $A_1$, we have that the first coordinate of $(x+u)A_1$ is zero, therefore $(x+u)A_1\in \l\mathcal{O}_{(\l)}\times \mathcal{O}_{(\l)}^{n-1}$. Since $uA_1\in \l\mathcal{O}_{(\l)}\times \mathcal{O}_{(\l)}^{n-1}$, then $xA_1\in \l\mathcal{O}_{(\l)}\times \mathcal{O}_{(\l)}^{n-1}$ and so
$x\in \l\mathcal{O}_{(\l)}\times \mathcal{O}_{(\l)}^{n-1}$ since $A_1$ is an automorphism of $\l\mathcal{O}_{(\l)}\times \mathcal{O}_{(\l)}^{n-1}$.

From the above observation we  have
\[
G_{k,\mathbf{r}}^s(\sigma(d),M,U,\varXi^{\mathfrak{l}})= 
G_{k,\mathbf{r}}^s(\sigma(d),M,U,\varXi) 
\]
for all $d\neq (1,\dots, 1)\in D$. Therefore, by
 \eqref{decom},  we have that 
$\Psi_\l(s,\mathfrak{A},Q,U,\varXi) $ is equal to
\[
\det(\sigma(1))\sum_{\mathbf{r}} 
       q^*_{\mathbf{r}}(\sigma(1))\left\lbrace 
G_{k,\mathbf{r}}^s(\sigma(1),M,U,\varXi^{\mathfrak{l}})- 
G_{k,\mathbf{r}}^s(\sigma(1),M,U,\varXi)      
       \right\rbrace .
\]
But
\[
\det(\sigma(1))\sum_{\mathbf{r}} 
       q^*_{\mathbf{r}}(\sigma(1))
G_{k,\mathbf{r}}^s(\sigma(1),M,U,\varXi^{\mathfrak{l}})
=D(s,\sigma(1),Q,U,\varXi_{\mathfrak{l}})
\]
and
\[
\det(\sigma(1))\sum_{\mathbf{r}} 
       q^*_{\mathbf{r}}(\sigma(1))
G_{k,\mathbf{r}}^s(\sigma(1),M,U,\varXi)
=D(s,\sigma(1),Q,U,\varXi).
\]
Thus the result follows.
\end{proof}

From  \Cref{l-smoohted-in-terms-of-smoothed-Dedekind sums}
and \eqref{smooth-dedek-decomp-F}
we obtain 
\begin{equation*}
\Psi_{\l} (s, \mathfrak{A},Q,U,\varXi_\l)=
 \sum_{
      \substack{
          U_0 \in \varXi /\varXi_\l \\  U_0 \not\in\varXi_\l
          }  }
-e(-u|u_0^*)D(s,\sigma(1),Q,U+U_0,\varXi^\l)
\end{equation*}
and
\begin{equation}\label{final decompi-del cocycle psi}
\Phi_{\l} ( \mathfrak{A},Q,U,\varXi_\l)=
 \sum_{
      \substack{
          U_0 \in \varXi /\varXi_\l \\  U_0 \not\in \varXi_\l
          }  }
-e(-u|u_0^*)D(\sigma(1),Q,U+U_0,\varXi^\l).
\end{equation}

\subsection{$\p$-adic properties of $\Phi_{\l}$ }
\label{Integrality properties of psi-lk}

Let $\f_1$, $\p$, $\Omega_{\infty}$, $\eta_{\infty}$, $\eta_{\p}$ and  $\Omega_{\p}$ be as in \Cref{Eisenstein-Kronecker numbers and properties}
for $\mathbf{a}=\l$. The algebraicity of $\Phi_{\l}$ will now be an easy consequence of \Cref{final decompi-del cocycle psi} and \Cref{Damerell}.

\begin{prop}\label{algeb-smoothed-psi-l-1}
For $\mathfrak{A}\in \Gamma_{\l}^n$ and 
$(Q,U,\varXi_\l)\in \mathbf{G}_{n}^{\l}$  the map
\begin{equation*}\label{cyc-alg-b}
\tilde{\Phi}_{\l}(\mathfrak{A},Q,U,\varXi_\l)
:=
\frac{\eta_{\infty}^{\deg q}}{\Omega_{\infty}^{\deg q^*+\,n}}
\Phi_{\l}(\mathfrak{A},Q,U,\varXi_\l),
\end{equation*}
 defines a cohomology class in  
 $H^{n-1}\big(\,\Gamma_{\mathfrak{l}},\mathcal{D}_{K^{\mathrm{ab}}}(\mathbf{G}_{n}^{\l})\,\big)$.
\end{prop}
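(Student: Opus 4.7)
The plan is to reduce algebraicity of $\tilde{\Phi}_\l$ to Damerell's theorem (Proposition \ref{Damerell}) via the decomposition \eqref{final decompi-del cocycle psi}, and then verify that the cocycle property and $\Gamma_\l$-equivariance are preserved by the period rescaling.

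First, I would unfold the definition using \eqref{final decompi-del cocycle psi}. Since $D(\sigma(1)) = \sigma(1) E^n$ when $\det(\sigma(1))\ne 0$, and since the action $Q\mapsto QA = (qA, q^*A^*)$ preserves the degrees $\deg q$ and $\deg q^*$ (the substitutions $q(X)\mapsto q(X\overline{A}^{-1})$ and $q^*(X)\mapsto q^*(X A^t)$ are linear in $X$), the rescaling factor $\eta_\infty^{\deg q}/\Omega_\infty^{\deg q^*+n}$ is invariant under the $\mathrm{GL}_n(K)$-action. Hence
\[
\tilde{\Phi}_{\l}(\mathfrak{A},Q,U,\varXi_\l)
=\sum_{\substack{U_0\in\varXi/\varXi_\l\\ U_0\notin\varXi_\l}}
-e(-u|u_0^*)\,\det(\sigma(1))\,\tilde{E}^n\big(Q\sigma(1),(U+U_0)\sigma(1),\varXi^\l\sigma(1)\big).
\]

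Second, I would apply Proposition \ref{Damerell} term-by-term. Each factor $\tilde{E}^n(\cdots)$ lies in $K^{\mathrm{ab}}$ because the arguments are of the form $(Q',U',\varXi')\in\mathbf{G}_n$ with $U'\in K^{2n}$ and $\varXi'\in\mathcal{W}_n$. The prefactor $e(-u|u_0^*)$ is a root of unity (since $u_0^*\in\Xi^{*}$ and $u\in K^n$ force the trace in the exponent to lie in $\Q$), hence lies in $\Q(\mu_\infty)\subset K^{\mathrm{ab}}$, and $\det(\sigma(1))\in K$. Thus every summand lies in $K^{\mathrm{ab}}$, so $\tilde{\Phi}_\l$ takes values in $K^{\mathrm{ab}}$. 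That $\tilde{\Phi}_\l(\mathfrak{A},\cdot)$ still satisfies the distribution axioms of \Cref{smoothed-distrib} is automatic: each axiom is $K$-linear in the output, and the rescaling factor depends only on the degrees of the polynomials in $Q$, which are invariant under the translations and scalings appearing in the axioms (note in particular $\deg(Qt)=\deg Q$ under the scalar action of $t\in K^\times$).

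Third, I would check that $\tilde{\Phi}_\l$ inherits both the cocycle property and the $\Gamma_\l$-homogeneity from $\Phi_\l$. The cocycle identity
\[
\sum_{i=0}^n(-1)^i\tilde{\Phi}_\l(A_0,\dots,\widehat{A}_i,\dots,A_n)(Q,U,\varXi_\l)=0
\]
follows by pulling out the common factor $\eta_\infty^{\deg q}/\Omega_\infty^{\deg q^*+n}$ and invoking the cocycle property for $\Phi_\l$ recalled in \Cref{The smoothed Eisenstein-Kronecker cocycle}. For $\Gamma_\l$-equivariance, the identity $\Phi_\l(A\mathfrak{A})=A\Phi_\l(\mathfrak{A})$ combined with the invariance of the rescaling factor under $Q\mapsto QA$ gives $\tilde{\Phi}_\l(A\mathfrak{A})=A\tilde{\Phi}_\l(\mathfrak{A})$.

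The main obstacle, though it is minor, is ensuring that the period normalization is truly compatible with the $\mathrm{GL}_n(K)$-action, i.e.\ checking that the degree-counting is preserved both under the action on $Q$ and under scalar multiplication; once that bookkeeping is done, the proof is essentially an immediate corollary of Damerell's theorem applied summand-by-summand to \eqref{final decompi-del cocycle psi}.
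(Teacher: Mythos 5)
Your proposal is correct and follows the same route the paper takes: the paper simply states that the result is an immediate consequence of the decomposition \eqref{final decompi-del cocycle psi} and Damerell's theorem (\Cref{Damerell}), which is exactly your argument, with the degree-invariance and root-of-unity bookkeeping spelled out. No gaps.
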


We now deal with the integrality of $\Phi_{\l}$. For $\p$ as above, we let $\mathfrak{A}\in \Gamma_\l$ and let $(Q,U,\varXi_\l)\in \mathbf{G}_{n}^{\l}$ be such that
\begin{equation}\label{p-adic-condition-tuple}
\left\{
\begin{aligned}
&\ 
\varXi_\l\otimes_{\mathcal{O}}\mathcal{O}_{K_{\overline{\p}}}=\mathcal{O}_{K_{\overline{\p}}}^{2n},\\
&\ \big(\,u^*+\p^{-\infty}(\Xi_\l)^*\,\big)\sigma^*\subset K^{\times n},\\
&\ 
Q(\,\overline{U+\varXi+ \varXi^\l}\,)\subset \O_{(\p)},
\end{aligned}
\right.
\end{equation}
 Note that the first condition is equivalent to $[\Lambda:\Xi\cap \Lambda]_{\mathcal{O}}$ and
 $[\Xi:\Xi\cap \Lambda]_{\mathcal{O}}$ being prime to $\p\overline{\p}$.
 
The following result is an immediate  consequence of \Cref{arithmetic-cong-of-Dk}.

\begin{thm}\label{congruence-psi}
For every  $(Q,U,\varXi_\l)\in \mathbf{G}_{n}^{\l}$ satisfying \eqref{p-adic-condition-tuple},  the element
\[
\tilde{\Phi}_{\l}(\mathfrak{A},Q,U,\varXi_\l)
\]
is $\mathfrak{p}$-integral. 
 Consider  $\Upsilon_\l\in W_{n,\l}$ satisfying \eqref{p-adic-condition-Upsilon}
  and let  $\varTheta:=\varXi_{\mathtt{M},\mathtt{N}}  $.
  Then, for any $V\in  U+\varXi_\l$
we have that
\begin{equation*}
\label{main-big-cong}
\tilde{\Phi}_{\l,U+\varXi_\l}(\mathfrak{A}, Q,V,\varUpsilon_\l)
\quad
\text{is $\p$-integral}
\end{equation*}
 and  congruent to
\begin{equation*}
 \label{main-big-cong-2}
 \sum_{
 \substack{
 W\in \mathcal{K}_{n,\l}/\varTheta_\l  \\
     W \equiv V\,\mathrm{mod}\,  \varXi_\l\cap \varUpsilon_\l
     }
     }
  Q(\,\overline{W}\,)
 \cdot
 \frac{\eta_{\p}^{\deg q}}{\Omega_{\p}^{\deg q^*}}
 \cdot
\tilde{ \Phi}_{\l,U+\varXi_\l}(\mathfrak{A}, W,\varTheta_\l) 
\end{equation*}
modulo $\pi^{\min\{ \mathtt{M}, \mathtt{N}\}-\delta }\widehat{\mathcal{O}}$, 
where 
$\delta=\delta(\sigma(1),Q,U,\Xi,\varLambda)$ is a nonnegative integer independent of
 $V$ and  $\Upsilon$.
\end{thm}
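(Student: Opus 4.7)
The natural route is to reduce both claims to \Cref{arithmetic-cong-of-Dk} via the decomposition \eqref{final decompi-del cocycle psi}. Rescaling that identity by $\eta_\infty^{\deg q}/\Omega_\infty^{\deg q^*+n}$ expresses $\tilde{\Phi}_\l$ as a finite sum
\[
\tilde{\Phi}_\l(\mathfrak{A},Q,U,\varXi_\l) \;=\; -\!\!\!\sum_{\substack{U_0\in\varXi/\varXi_\l\\ U_0\notin\varXi_\l}}\!\! e(-u|u_0^*)\,\tilde{D}(\sigma(1),Q,U+U_0,\varXi^\l),
\]
so that both claims reduce to the corresponding claims for each Dedekind summand, which are precisely what \Cref{arithmetic-cong-of-Dk} delivers upon setting $\sigma=\sigma(1)$, $\varXi:=\varXi^\l=\Xi_\l\times(\Xi_\l)^*$, $U:=U+U_0$, and $\varLambda:=\l^n\times(\l^n)^*$ (the choice of \Cref{sec-p-adic-periods} with $\mathbf{a}=\l$).

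The hypothesis $(Q,U+U_0,\varXi^\l)\cdot\sigma(1)\in\mathbf{G}_{n,\p,\varLambda}$ is supplied directly by \eqref{p-adic-condition-tuple}: the second bullet gives the dual-component nonvanishing after the $\sigma(1)$-twist, the third bullet yields $Q(\overline{\,\cdot\,})\subset\O_{(\p)}$, and the primary-component nonvanishing comes from combining these with the nonsingularity of $\sigma(1)$. The factor $\mathfrak{g}=[\Lambda:\Xi_\l]_\O[\Xi_\l\sigma(1):\Lambda\cap\Xi_\l\sigma(1)]_\O$ from \Cref{arithmetic-cong-of-Dk} is a $\p$-unit: by the first bullet of \eqref{p-adic-condition-tuple} the indices $[\Lambda:\Xi_\l\cap\Lambda]_\O$ and $[\Xi_\l:\Xi_\l\cap\Lambda]_\O$ are coprime to $p$, and the twist by $\sigma(1)\in M_n(\O_{(\l)})$ (which is coprime to $p$) preserves this coprimality.

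Granting these hypotheses, the first assertion follows at once, since $\tilde{\Phi}_\l(\mathfrak{A},Q,U,\varXi_\l)$ is then a finite sum of $\p$-integral terms. For the subscripted version and the congruence, I apply the same decomposition to $\tilde{\Phi}_{\l,U+\varXi_\l}(\mathfrak{A},Q,V,\varUpsilon_\l)$---which by definition equals $e(-u|v^*-u^*)/[\Upsilon_\l:\Xi_\l\cap\Upsilon_\l]$ times $\tilde{\Phi}_\l(\mathfrak{A},Q,V,\varUpsilon_\l)$---to obtain a finite sum of $\tilde{D}_X$-values at $\varUpsilon^\l$. The congruence part of \Cref{arithmetic-cong-of-Dk} equates each such value with the corresponding sum $\sum_W Q(\overline{W})(\eta_\p^{\deg q}/\Omega_\p^{\deg q^*})\tilde{D}_X(\sigma(1),W,\varTheta_\l)$ modulo $\pi^{\min\{\mathtt{M},\mathtt{N}\}-\delta_{V_0}}\widehat{\O}$. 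Taking $\delta$ to be the maximum of the finitely many $\delta_{V_0}$'s (which depend only on $(\sigma(1),Q,U,\varXi_\l,\varLambda)$ by the final clause of \Cref{arithmetic-cong-of-Dk}) and recombining the $W$-sums back into $\tilde{\Phi}_\l$ on the right-hand side yields the stated uniform congruence.

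The principal difficulty is ensuring uniformity of $\delta$ over the shifts by $V_0\in\varUpsilon/\varUpsilon_\l$ and over the auxiliary lattice $\varUpsilon_\l$ itself; this is precisely the $\Upsilon$-independence guaranteed by \Cref{arithmetic-cong-of-Dk}, so the uniformity is formal. Otherwise the argument is little more than a systematic combination of the decomposition \eqref{final decompi-del cocycle psi} with the arithmetic properties of $\tilde{D}$ already established in \Cref{padic  properties of D}.
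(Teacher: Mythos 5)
Your overall strategy coincides with the paper's: expand $\tilde{\Phi}_\l$ via \eqref{final decompi-del cocycle psi} and feed each Dedekind summand into \Cref{arithmetic-cong-of-Dk}. But the proposal has a genuine gap at the one point where the smoothing actually enters. To apply \Cref{arithmetic-cong-of-Dk} you must verify $(Q,U+U_0,\varXi^\l)\cdot\sigma(1)\in\mathbf{G}_{n,\p,\varLambda}$, and in particular the \emph{primary}-component non-vanishing $(u+u_0+\Xi_\l)\sigma(1)+\p^{-\infty}\Lambda\subset K^{\times n}$ for every representative $U_0\in\varXi/\varXi_\l$ with $U_0\notin\varXi_\l$. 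This is not among the hypotheses \eqref{p-adic-condition-tuple} (only the dual component is), and it does not follow from ``nonsingularity of $\sigma(1)$'': a nonsingular matrix can perfectly well send $u+u_0+\xi$ to $-\lambda$ for some $\xi\in\Xi_\l$, $\lambda\in\p^{-\infty}\Lambda$. The actual argument is $\l$-adic: since each $A_i\in\Gamma_\l$ is an automorphism of $\l\mathcal{O}_{(\l)}\times\mathcal{O}_{(\l)}^{n-1}$, the quantities $u\sigma_i$, $\Xi_\l\sigma_i$ and $\p^{-\infty}\Lambda$ all land in $\l\mathcal{O}_{(\l)}$ (here $\Lambda=\l^n$), whereas $u_0\in\Xi\setminus\Xi_\l$ forces $u_0\sigma_i\notin\l\mathcal{O}_{(\l)}$; hence the sum cannot vanish. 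This valuation mismatch is the whole reason the smoothed cocycle is integral while the unsmoothed one is not, and your write-up skips it entirely.

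Two further points you elide are load-bearing. First, the paper rescales $\sigma(1)$ by a $t$ prime to $\l$ so that $\Xi_\l\sigma\subset\Lambda$ (equivalently $\Lambda^*\subset\Xi_\l^*\sigma^*$); without this, (a) your containment $(u^*+u_0^*+\Xi_\l^*)\sigma^*+\p^{-\infty}\Lambda^*\subset(u^*+\p^{-\infty}\Xi_\l^*)\sigma^*$ fails, and (b) the factor $[\Xi_\l\sigma:\Lambda\cap\Xi_\l\sigma]_\mathcal{O}$ in $\mathfrak{g}$ need not be a $\p$-unit — your claim that ``$\sigma(1)\in M_n(\mathcal{O}_{(\l)})$ is coprime to $p$'' is false, since $\det\sigma(1)$ can have positive $\p$-valuation. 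After the normalization that factor is trivially $\mathcal{O}$ and only $[\Lambda:\Xi_\l]_\mathcal{O}$ remains, which is prime to $p$ by the first line of \eqref{p-adic-condition-tuple}. Second, for the congruence you need to choose the representatives $U_0$ inside $\varXi\cap\varTheta$ with $u_0^*=0$, so that the shifts are compatible with the $\varTheta_\l$-decomposition, the factors $Q(\overline{\,\cdot\,})$ are unchanged modulo $\pi^{\min\{\mathtt{M},\mathtt{N}\}}$, and the $W$-sums recombine into $\tilde{\Phi}_\l$ on the right-hand side; ``recombining'' is not automatic for arbitrary representatives.
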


\begin{proof}
We observe that rescaling $\sigma=\sigma(1)$ by some $t\in K^{\times}$ prime to $\l$ if necessary, we may assume 
that 
$\Xi_{\l}\sigma \subset \Lambda
\quad (\text{or equivalently}\quad
\Lambda^*\subset \Xi_{\l}^*\sigma^*).$

The result will follow from
\Cref{final decompi-del cocycle psi} and
\Cref{arithmetic-cong-of-Dk} after making the following observations.

First, observe that $\varXi\cap \varTheta\subset \Xi\cap \varUpsilon$ 
induces (by \eqref{l-properties}) the canonical isomorphims of $ \varXi\cap \varTheta\, /\, \varXi_\l\cap \varTheta_\l$ with each of the quotients $\varXi/\varXi_\l$, $\varUpsilon/\varUpsilon_\l$
and $\varTheta/\varTheta_\l$. Then we can take for representatives of the sum in \Cref{final decompi-del cocycle psi} elements 
$U_0=(u_0,u_0^*)\in  \varXi\cap \varTheta\, /\, \varXi_\l\cap \varTheta_\l$ with $U_0\notin \varXi_\l\cap \varTheta_\l$ and with $u_0^*=0$. 

This implies that $Q(\overline{U+U_0+\varXi+\varXi^\l})\equiv Q(\overline{U+\varXi+\varXi^\l})$ 
mod $\pi^{\min\{\mathtt{M},\mathtt{N}\}}\O_{(\p)}$. Furthermore, note that if $V$ runs through a full set of representatives of the classes $ \mathcal{K}_{n,\l}/\Upsilon_\l$ congruent to $U\,\mathrm{mod}\,\varXi_\l$, then 
$V+U_0$ runs through a full set of representatives of the classes $ K^{ 2n}/\Upsilon^\l$ congruent to $U+U_0\,\mathrm{mod}\,\varXi^\l$.
Similarly, if $W$ runs through a full set of representatives of the classes $ \mathcal{K}_{n,\l}/\Theta_\l$ congruent to $V\,\mathrm{mod}\,\varXi_\l\cap \varUpsilon_\l$, then 
$W+U_0$ runs through a full set of representatives of the classes $ K^{ 2n}/\varTheta^\l$ congruent to $V+U_0\,\mathrm{mod}\,\varXi^\l\cap \varUpsilon^\l$

From these observations, according to \Cref{arithmetic-cong-of-Dk},
it remains to verify that $(Q,U+U_0,\varXi^\l)\cdot\sigma\in \mathbf{G}_{n,\p,\varLambda}$. This is clear except perhaps for the condition
 $
 (U+U_0+\varXi_\l)\sigma+\p^{-\infty}\varLambda \subset K^{\times 2n}.
 $
  Since $u_0^*=0\in \subset \Xi_\l^*$, then from the initial observation, we have that
 \[
 (u^*+u_0^*+\Xi_\l^*)\sigma^*+\p^{-\infty}\Lambda^*
 \subset  (u^*+\p^{-\infty}\Xi_\l^*)\sigma^*
 \subset K^{\times n}.
 \]
Now let us show that
\begin{equation*}\label{non-van-usigma -Xi}
(u+u_0+\Xi_\l)\sigma+ \p^{-\infty}\Lambda\subset K^{\times n}.
\end{equation*}
Let us recall that $\mathfrak{A}=(A_{1},\dots, A_n)\in \Gamma_\l^n$ and
$\sigma=(\sigma_1,\dots, \sigma_n)$, where 
$\sigma_i$ denotes the first column of $A_{i}$.  Since $A_i\in \Gamma_\l$, then
$
u\sigma+\Xi_\l\sigma+\p^{-\infty}\Lambda\subset (\l\mathcal{O}_{(\l)})^n$. Therefore, it will be enough to prove that
\begin{equation}\label{w-A-i}
U_0\sigma_i\notin \l\mathcal{O}_{(\l)},
\quad i=1,\dots,n.
\end{equation}
Since $u_0\in \Xi- \Xi_\l$ and $A_i$ is an automorphism of $\l\mathcal{O}_{(\l)}\times\mathcal{O}_{(\l)}^{n-1}$, then $u_0A_i\in \mathcal{O}_{(\l)}^{n}- \l\mathcal{O}_{(\l)}\times\mathcal{O}_{(\l)}^{n-1}$, which implies \eqref{w-A-i}.

 \end{proof}

\subsection{The smoothed partial $L$-function }\label{The smoothed partial $L$-function}
Let $\f$ be an integral ideal and $\mathfrak{b}$ be a fractional ideal of $F$. Let $\c$ be  integral ideal of $F$
with prime norm to $K$ 
 such that 
\be\label{condition-c-1}
\text{$N_{F/K}(\mathfrak{c})$ is coprime to $\f$}.
\ee
Let  $\l$ denote the prime ideal $\overline{N_{F/K}(\mathfrak{c})}$  of $K$.
 Let $r\in F$ and $r^* \in F^*$ such that
\be\label{smoothed-condition-r-gamma}
\f \,r \subset \mathfrak{b}
\quad \mathrm{ and}\quad 
 \overline{\f} r^* \subset (\mathfrak{c}^{-1}\mathfrak{b})^*
\ee
and satisfying \eqref{condition-gamma},
 then we define the $\l$-smoothed partial $L$-function by
\[ 
    \L_{\f,\c}(\lambda,\mathfrak{b} ,r,r^*,s):
   =
    \L_\f( \lambda,\mathfrak{c}^{-1}\mathfrak{b},r,r^*,s)
    \,- \, 
 N_{F/\Q}(\mathfrak{c})   \L_\f(\lambda, \mathfrak{b},r,r^*,s).
\]

In particular, if $\mathfrak{a}$ is a fractional ideal of $F$ relatively prime to $\f$, then we define
\[
L_{\f,\c}(\aa,\chi,s):=L_\f(\aa \c,\chi,s)-\chi(\c)N_{F/\Q}(\c)^{1-s}L_{\f}(\aa,\chi,s),
\]
for which we have
\[
L_{\f,\c}(\aa,\chi,s)=\frac{\chi(\aa \c)}{N_{F/\Q}(\aa \c )^s} \L_{\f,\c}(\lambda,\aa^{-1}\f,1,0,s)
\]
and
\[
\sum_{a\in G_{\f}}L_{\f,\c}(\aa,\chi,s)=(1-\chi(\c)N_{F/\Q}(\c)^{1-s})L_{\f}(\chi,s).
\]

We can now parametrize the smoothed partial $L$-function by the smoothed cocycle just defined as follows. 
 Let $ \mathbf{G}_{n,\l}(\lambda,\mathfrak{b},r,r^*)$ be the set of tuples
$ (Q,U,\varXi)\in  \mathbf{G}_{n}(\lambda,\mathfrak{b},r,r^*)$
such that $ \varXi^*=\Xi^*\times \Xi\in \mathcal{W}_{n,\l}$ satisfies
$
\b^*=\Xi^*\cdot m^*$
and $ (\c^{-1}\b)^*=(\Xi^*)_\l\cdot m^*$, i.e., $(\c^{-1}\b)^*\times \b=\varXi^*_\l(m^*,m)$.
 By \eqref{smoothed-condition-r-gamma}, we have  that
$
u\in \f_0^{-1}\Xi$
and
$
u^*\in 
\overline{\f}_0^{-1}(\Xi^*)_\l$,
where $\f_0:=\mathrm{Ann}_{\mathcal{O}}(\mathcal{O}_F/\mathfrak{\f})=\mathfrak{f}\cap \mathcal{O}$. Thus from  \eqref{condition-c-1},  $u^*$ has order prime to $\l$ in $K^n/\Xi_{\l}^*$, i.e., $u^*\in \Xi_{\l}^*\otimes_{\mathcal{O}}\mathcal{O}_{(\l)}=\l\mathcal{O}_{(\l)}\times \mathcal{O}_{(\l)}^{n-1}$, thus $(Q^*,U^*,\varXi^*_{\l})\in \mathbf{G}_n^\l$

The following result is a direct consequence of \Cref{dual-L-func-parametrization-via cocycle} and \Cref{l-smoohted-in-terms-of-smoothed-Dedekind sums}, and the fact 
that $W(\lambda,\mathfrak{c}^{-1}\mathfrak{b},r,r^*)=N_{F/\Q}(\mathfrak{c})W(\lambda,\mathfrak{b},r,r^*)$ (see also \cite[Theorem 1.2]{BCG2})

\begin{thm}\label{parametrization-dual-smoothed-L-function}
The  value
\begin{equation*}
 [U_\f:V_\f]\cdot 
  \Gamma(l)^n \cdot 
  (2 \pi i)^{nk}
    \L_{\f,\c}(\lambda,\mathfrak{b},r,r^*,0)
\end{equation*}
equals
\[
W_{\mathfrak{c}}
    \det(\overline{M})
( -2  \pi  i)  ^ {   n (l-1)    }
\Phi_{\l}(\mathfrak{E}^*,Q^*,U^*,(\varXi^*)_\l),
\]
where $W_{\c}:=W(\lambda,\mathfrak{c}^{-1}\mathfrak{b},r,r^*)$ is given by \eqref{functional-factor-2}, $\mathfrak{E}^*$ is given by \eqref{dual-cycle}, and $(Q,U,\varXi)\in \mathbf{G}_{n,\l}(\lambda,\mathfrak{b},r,r^*)$.
In particular, for $\mathfrak{b}=\mathfrak{a}\f^{-1}$, $r=1$ and $r^*=0$, we have
\begin{equation*}
 [U_\f:V_\f]\cdot 
  \Gamma(l)^n \cdot 
  (2 \pi i)^{nk}
    L_{\f,\c}(\mathfrak{a},\chi,0)
    =
 T ( -2  \pi  i)  ^ {   n (l-1)    }\Phi_{\l}(\mathfrak{E}^*,Q^*,U^*,(\varXi^*)_\l),
\end{equation*}
where  $T=\chi(\mathfrak{a})W_\c
    \det(\overline{M})$.
  
    \end{thm}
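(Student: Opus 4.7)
The strategy is to apply the non-smoothed parametrization of \Cref{dual-L-func-parametrization-via cocycle} separately to each of the two summands making up the smoothed partial $L$-function, and then to repackage the resulting linear combination as a value of $\Phi_\l$ using the definition of the smoothing operator together with \Cref{l-smoohted-in-terms-of-smoothed-Dedekind sums}.

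Concretely, starting from a tuple $(Q,U,\varXi)\in\mathbf{G}_{n,\l}(\lambda,\b,r,r^*)$, I would introduce the companion tuple $(Q,U,\varXi')$ where $\varXi':=((\Xi^*)_\l)^*\times(\Xi^*)_\l$. The defining conditions of $\mathbf{G}_{n,\l}(\lambda,\b,r,r^*)$---namely $\b=\Xi m$, $\b^*=\Xi^*m^*$, and $(\c^{-1}\b)^*=(\Xi^*)_\l m^*$, combined with the constraint \eqref{smoothed-condition-r-gamma}---would ensure that $(Q,U,\varXi')\in\mathbf{G}_n(\lambda,\c^{-1}\b,r,r^*)$ and that this companion tuple shares the same basis $m$, matrix $M$, and hence the same cycle $\mathfrak{E}^*$. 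Applying \Cref{dual-L-func-parametrization-via cocycle} to each tuple would then yield
\begin{align*}
[U_\f:V_\f]\,\Gamma(l)^n(2\pi i)^{nk}\,\L_\f(\lambda,\b,r,r^*,0)&=W\det(\overline{M})(-2\pi i)^{n(l-1)}\Psi(\mathfrak{E}^*,Q^*,U^*,\varXi^*),\\
[U_\f:V_\f]\,\Gamma(l)^n(2\pi i)^{nk}\,\L_\f(\lambda,\c^{-1}\b,r,r^*,0)&=W_\c\det(\overline{M})(-2\pi i)^{n(l-1)}\Psi(\mathfrak{E}^*,Q^*,U^*,(\varXi')^*),
\end{align*}
where $W:=W(\lambda,\b,r,r^*)$. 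A direct inspection shows that $(\varXi')^*=(\Xi^*)_\l\times((\Xi^*)_\l)^*$, which is exactly $(\varXi^*)^\l$ in the notation of \Cref{smoothing-op} applied to the dual pair $\varXi^*$.

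I would then subtract the two identities according to $\L_{\f,\c}=\L_\f(\cdot,\c^{-1}\b,\cdot)-N_{F/\Q}(\c)\L_\f(\cdot,\b,\cdot)$ and invoke the elementary relation $W_\c=N_{F/\Q}(\c)W$, which is immediate from \eqref{functional-factor-2}. The right-hand side would collapse to $W_\c\det(\overline{M})(-2\pi i)^{n(l-1)}\bigl[\Psi(\mathfrak{E}^*,Q^*,U^*,(\varXi^*)^\l)-\Psi(\mathfrak{E}^*,Q^*,U^*,\varXi^*)\bigr]$, which equals $W_\c\det(\overline{M})(-2\pi i)^{n(l-1)}\Psi_\l(\mathfrak{E}^*,Q^*,U^*,(\varXi^*)_\l)$ by \Cref{smoothing-op}; replacing $\Psi_\l$ by $\Phi_\l$ via \Cref{l-smoohted-in-terms-of-smoothed-Dedekind sums} would then establish the main identity. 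The ``In particular'' statement would follow immediately from \eqref{relation-partial-partial-non-integral-a}, which rewrites $L_{\f,\c}(\aa,\chi,0)$ as a scalar multiple of $\L_{\f,\c}(\lambda,\aa^{-1}\f,1,0,0)$. The only genuine obstacle is the bookkeeping verification that $(Q,U,\varXi')$ really lies in $\mathbf{G}_n(\lambda,\c^{-1}\b,r,r^*)$ and that $U$ and $\mathfrak{E}^*$ can be reused unchanged; this reduces to the dualization identity $(((\Xi^*)_\l)^*)\,m=\c^{-1}\b$ together with the compatibility of $U$ with $(r,r^*)$, both guaranteed by the definition of $\mathbf{G}_{n,\l}(\lambda,\b,r,r^*)$.
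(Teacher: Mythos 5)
Your proposal is correct and follows exactly the route the paper takes: the paper states the theorem as a direct consequence of \Cref{dual-L-func-parametrization-via cocycle}, \Cref{l-smoohted-in-terms-of-smoothed-Dedekind sums}, and the relation $W(\lambda,\c^{-1}\b,r,r^*)=N_{F/\Q}(\c)W(\lambda,\b,r,r^*)$, which is precisely the subtraction-and-repackaging argument you spell out. Your identification of the companion tuple with dual $(\varXi^*)^\l$ and the verification that $U$, $M$, and $\mathfrak{E}^*$ carry over unchanged are the details the paper leaves implicit, and they check out.
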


\subsection{A special choice for the parameters $(Q,U,\varXi)\in \mathbf{G}_n^{\l}(\lambda,\mathfrak{b},r,r^*)$}
\label{A special choice for the  complex  and $p$-adic periods}

Let $\p$ be a prime ideal of $K$ above the rational prime $p>3$ which splits in $K/\Q$.
By \cite[\S 81D]{Ome}, we can pick a basis $\{m_i\}$ of $F/K$ such that
\begin{equation}\label{dec-basis-b--df}
\left\{
\begin{aligned}
&\, \b\,\otimes_{\mathcal{O}}\mathcal{O}_{(\l)}=
\mathcal{O}_{(\l)} m_1+\cdots+\mathcal{O}_{(\l)} m_n \\
&\b\c^{-1}\otimes_{\mathcal{O}}\mathcal{O}_{(\l)}=
\l^{-1}\mathcal{O}_{(\l)} m_1+\cdots+\mathcal{O}_{(\l)} m_n\\
&\, \b\,\otimes_{\mathcal{O}}\mathcal{O}_{(\p)}=
\mathcal{O}_{(\p)} m_1+\cdots+\mathcal{O}_{(\p)} m_n\\ 
&\, \b\,\otimes_{\mathcal{O}}\mathcal{O}_{(\overline{\p})}=
\mathcal{O}_{(\overline{\p})}\, m_1+\cdots+\mathcal{O}_{(\overline{\p})} \,m_n
\end{aligned}
\right.
\end{equation}
Note that if $\b$ is $\overline{\p}$-integral, this implies that
\begin{equation}\label{det-M-p-integral}
\text{$\overline{\det(M)}$ is $\p$-integral.}
\end{equation}
For this $M$, let $\mathfrak{E}_{\f,M^*}$ and $\mathfrak{A}_{M^*,\pi}$  be as in \eqref{dual-cycle}. Let $\sigma_\pi=\sigma_{\pi}(1)$ denote the matrix formed by taking the first column of each of the matrices of $[\varrho(\epsilon_{\pi(1)})^h|\cdots | \varrho(\epsilon_{\pi(n-1)})^h) ]$. Let 
$
\eta_{1,\pi}=1,\ \eta_{2,\pi}=\epsilon_{\pi(1)},\dots,\ 
\eta_{n,\pi}=\epsilon_{\pi(1)}\cdots \epsilon_{\pi(n-1)}$,
then $\sigma_{\pi}=\sigma_{\pi}(1)\in  M_n(\mathcal{O}_{(\l)})$ 
satisfies, according to \eqref{matrix-rep-trace},
\[
\sigma_{\pi}^h=
\begin{pmatrix}
\mathrm{Tr}_{F/K}(m_1\eta_{1,\pi}\overline{m_1^*})
& \dots &\mathrm{Tr}_{F/K}(m_1\eta_{1,\pi} \overline{m_n^*}) \\
\vdots& \ddots &\vdots\\
\mathrm{Tr}_{F/K}(m_1\eta_{n,\pi}\overline{m_1^*})
& \dots &\mathrm{Tr}_{F/K}(m_1\eta_{n,\pi}\overline{m_n^*})
\end{pmatrix}
\]
so that
$
\sigma_{\pi}^h(m_1,\dots, m_n)^t=(m_1\eta_{1,\pi},\dots, m_1\eta_{n,\pi})^t.$
Note that if $\det(\sigma_{\pi})\neq 0$ for  $\pi\in S_n$, then  
$\{\eta_{\pi,i}\}$ is a basis of $F/K$. Thus, if $u\in K^n$  is such that $r=u\cdot m$, then
\[
\frac{r}{m_1}=u\sigma_{\pi}^* \cdot  (\eta_{1,\pi},\dots,\eta_{n,\pi})^t.
\]
Let $\mathfrak{e}$ denote $\b r^{-1}\cap \mathcal{O}_F $, the order of $r$ in $F/\mathfrak{b}$ as an $\mathcal{O}_F$-module, so that $\mathfrak{e}|\f$. Then we have the following result.

\begin{lem}\label{condition-u-important} 

Suppose that $\b\mathfrak{e}^{-1}=(\mathfrak{b}^{-1}\cap r^{-1}\mathcal{O}_F)^{-1}$ is relatively prime to $\p\overline{\p}\l$
and that $\mathfrak{e}$ is  divisible by an integral ideal $\mathfrak{g}$ of $K$ that is also relatively prime to $\p\overline{\p}\l$ and satisfies $n!<N_{K/\Q}(\mathfrak{g})$.
Then we can choose a basis $\{m_i\}$ of $F/K$ satisfying \eqref{dec-basis-b--df} such that
\begin{equation}\label{condition-u-important-sigma}
0\notin (u+\mathcal{O}_{(\mathfrak{e})}\Xi)\sigma_{\pi,i}^*
\end{equation}
for all $i=1,\dots, n$ and all $\pi\in S_{n-1}$ for which $\det(\sigma_{\pi})\neq 0$, where $\sigma_{\pi,i}^*$ denotes the $i$th column of $\sigma_{\pi}^*$. 
In particular,
note that if $\p$ is coprime to $\mathfrak{e}$ then 
$\p^{-\infty}\subset \mathcal{O}_{(\mathfrak{e})}$.

\end{lem}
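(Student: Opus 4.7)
My plan is to translate condition \eqref{condition-u-important-sigma} into an intrinsic avoidance statement in $F$, fix an initial basis satisfying \eqref{dec-basis-b--df}, and then adjust $m_1$ modulo $\mathfrak{g}\mathfrak{b}$ via a residue-counting argument based on the hypothesis $n!<N_{K/\Q}(\mathfrak{g})$.

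\smallskip

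\emph{Intrinsic reformulation.} Inverting the relation $\sigma_{\pi}^h m=(m_1\eta_{i,\pi})_i^t$ yields $m=\sigma_{\pi}^{\ast}(m_1\eta_{i,\pi})_i^t$, so that for any row $x\in K^n$,
\[
xm=m_1\sum_{i=1}^{n}(x\sigma_{\pi,i}^{\ast})\,\eta_{i,\pi}.
\]
Thus $(u+x)\sigma_{\pi,i}^{\ast}$ is the coefficient of $\eta_{i,\pi}$ in the expansion of $(r+xm)/m_1$ along the $K$-basis $\{\eta_{j,\pi}\}_j$ of $F$. Since $\Xi m=\mathfrak{b}$, condition \eqref{condition-u-important-sigma} becomes
\[
r\notin m_1 V_{i,\pi}+\mathcal{O}_{(\mathfrak{e})}\mathfrak{b}\qquad\text{for every admissible }(i,\pi),
\]
where $V_{i,\pi}:=\bigoplus_{j\neq i}K\,\eta_{j,\pi}$ is a codimension-one $K$-subspace of $F$ depending only on the units $\epsilon_j$ (not on the basis $\{m_i\}$).

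\smallskip

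\emph{Initial basis and perturbations.} By \cite[\S81D]{Ome} I first fix a basis $\{m_i^{(0)}\}$ of $F/K$ satisfying \eqref{dec-basis-b--df}; the constraints there involve only $\p,\overline{\p},\l$, each of which is coprime to $\mathfrak{g}$. For any $y\in\p\overline{\p}\l\,\mathfrak{b}$, the basis $\{m_1^{(0)}+y,\,m_2^{(0)},\ldots,m_n^{(0)}\}$ still satisfies \eqref{dec-basis-b--df}, and by the Chinese Remainder Theorem the reduction $\p\overline{\p}\l\,\mathfrak{b}\twoheadrightarrow\mathfrak{b}/\mathfrak{g}\mathfrak{b}$ is surjective. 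Hence the class of $m_1$ in the $\mathcal{O}/\mathfrak{g}$-module $\mathfrak{b}/\mathfrak{g}\mathfrak{b}$, of cardinality $N_{K/\Q}(\mathfrak{g})^n$, may be prescribed freely.

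\smallskip

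\emph{Counting and main obstacle.} For each of the at most $n\cdot(n-1)!=n!$ pairs $(i,\pi)$ with $\det(\sigma_{\pi})\neq 0$, I will show that the classes of $y$ for which the reformulated condition fails at $(i,\pi)$ are contained in a proper $\mathcal{O}/\mathfrak{g}$-sub\-module of $\mathfrak{b}/\mathfrak{g}\mathfrak{b}$, hence number at most $N_{K/\Q}(\mathfrak{g})^{n-1}$. Summing, the total bad-set size is at most $n!\,N_{K/\Q}(\mathfrak{g})^{n-1}<N_{K/\Q}(\mathfrak{g})^n$ by hypothesis, so a good $y$ exists and yields the required basis. The concluding ``In particular'' assertion is then immediate from the definition of $\mathcal{O}_{(\mathfrak{e})}$: if $\p\nmid\mathfrak{e}$, any uniformiser of $\p$ lies in $\mathcal{O}_{(\mathfrak{e})}^{\times}$, and so $\p^{-\infty}\subset\mathcal{O}_{(\mathfrak{e})}$. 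The principal obstacle lies in the proper-submodule claim for each individual $(i,\pi)$; this reduces, prime by prime of $\mathfrak{g}$, to exhibiting a nonzero $\mathcal{O}/\mathfrak{q}$-linear form vanishing on all bad classes modulo $\mathfrak{q}\mathfrak{b}$, and will follow from $r\notin\mathcal{O}_{(\mathfrak{e})}\mathfrak{b}$ (a consequence of $\mathfrak{g}\subset\mathfrak{e}\neq\mathcal{O}_F$) together with the non-degenerate action of $\mathfrak{b}$ by multiplication on $F/\mathcal{O}_{(\mathfrak{e})}\mathfrak{b}$.
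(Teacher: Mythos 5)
Your reduction of \eqref{condition-u-important-sigma} to the intrinsic statement $r\notin m_1V_{i,\pi}+\mathcal{O}_{(\mathfrak{e})}\mathfrak{b}$ is correct, your perturbation of the basis preserves \eqref{dec-basis-b--df}, and your overall counting scheme ($n!$ bad sets, each of size at most $N_{K/\Q}(\mathfrak{g})^{n-1}$, inside a parameter space of size $N_{K/\Q}(\mathfrak{g})^{n}$) is the same one the paper uses. The gap is precisely the step you yourself flag as the principal obstacle. With the additive perturbation $m_1\mapsto m_1+y$, the bad set for a fixed admissible pair $(i,\pi)$ is
\[
\bigl\{\,y\ :\ r\in(m_1+y)V_{i,\pi}+\mathcal{O}_{(\mathfrak{e})}\mathfrak{b}\,\bigr\}
\;=\;\bigcup_{v\in V_{i,\pi}\setminus\{0\}}\Bigl(\tfrac{r}{v}-m_1+v^{-1}\mathcal{O}_{(\mathfrak{e})}\mathfrak{b}\Bigr),
\]
a union of cosets of the varying lattices $v^{-1}\mathcal{O}_{(\mathfrak{e})}\mathfrak{b}$. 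Because $y$ enters the condition through the denominator $(m_1+y)^{-1}$, badness is not linear in $y$; it does not even factor through the class of $y$ in $\mathfrak{b}/\mathfrak{g}\mathfrak{b}$ (replacing $y$ by $y+g$ with $g\in\mathfrak{g}\mathfrak{b}$ shifts the witness $(m_1+y)v$ by $gv$, and $gv$ need not lie in $\mathcal{O}_{(\mathfrak{e})}\mathfrak{b}$ since $v$ ranges over a full $K$-subspace). So the assertion that the bad classes are contained in a proper $\mathcal{O}/\mathfrak{g}$-submodule of $\mathfrak{b}/\mathfrak{g}\mathfrak{b}$ is unsubstantiated, and the hint you offer (nondegeneracy of the trace pairing and of multiplication) does not by itself produce a linear form annihilating this union.

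The paper's proof sidesteps exactly this difficulty by perturbing multiplicatively, $m_1\mapsto m_1/\beta$, with $\beta$ constrained to lie in $r\mathfrak{e}\mathfrak{b}^{-1}\subseteq\mathcal{O}_F$. The containment $\beta\mathfrak{b}\subseteq r\mathfrak{e}\subseteq r\mathfrak{g}\mathcal{O}_F$ converts the avoidance condition into $0\notin\mathrm{Tr}_{F/K}\bigl((\beta+\mathfrak{g}\mathcal{O}_F)x_{i,\pi}\bigr)$ with $x_{i,\pi}=r\,\overline{\eta^*_{i,\pi}}/m_1$ fixed; this is genuinely $\mathcal{O}$-linear in $\beta$, its failure set $T_{x_{i,\pi}}$ is an honest $\mathcal{O}$-submodule of index $\mathfrak{g}$ in $\mathcal{O}_F$ (so of index $N_{K/\Q}(\mathfrak{g})^{n-1}$ over $\mathfrak{g}\mathcal{O}_F$), and the condition manifestly depends only on $\beta\bmod\mathfrak{g}\mathcal{O}_F$. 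A final Chinese Remainder adjustment --- which itself requires verifying that $\mathfrak{e}r\mathfrak{b}^{-1}$ is coprime to $\mathfrak{g}$ and may be taken coprime to $\mathfrak{p}\overline{\mathfrak{p}}\mathfrak{l}$ --- arranges $\beta\equiv1\bmod\mathfrak{p}\overline{\mathfrak{p}}\mathfrak{l}$ so that \eqref{dec-basis-b--df} is preserved. To complete your argument you should replace the additive perturbation by this multiplicative normalization; as written, the counting step does not go through.
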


\begin{proof}
We start by noticing that it suffices to show that
\begin{equation}\label{prime-idel}
0\notin (u+\Xi)\sigma_{\pi,i}^*
\quad \text{}
\end{equation}
for all $i=1,\dots, n$ and all $\pi\in S_{n-1}$ for which $\det(\sigma_{\pi})\neq 0$. Indeed,
if there exists a nonzero $h\in \O$ prime to $\mathfrak{e}$ such that 
 $hu\sigma_{\pi,i}^*\in \Xi\sigma_{\pi,i}^*$, then we can find  $x\in O$ and $e\in \mathfrak{e}$ such that $1=hx+e$
 which implies $u\sigma_{\pi,i}^*\equiv hu\sigma_{\pi,i}^*\,\mathrm{mod}\, \Xi\sigma_{\pi,i}^* $
which contradicts 
 \eqref{condition-u-important-sigma}.

Letting $\{\eta_{i,\pi}^*\}$  denote the dual basis of $\{\eta_{i,\pi}\}$, for every $\pi\in S_{n-1}$, we see that  \eqref{prime-idel} is equivalent to showing that there exists a basis $\{m_i\}$ such that
\begin{equation}\label{trace-nonzero-r}
0\notin \mathrm{Tr}_{F/K}\left( \, 
(r+ \b) \cdot \frac{\overline{\eta^*_{i,\pi}}}{m_1}\, \right)
\quad \text{for all $i=1,\dots,n$ and $\pi \in S_{n-1}$.}
\end{equation}
In order to do this, we will show that there exists a $\beta\in r\mathfrak{e}\b^{-1}\subseteq \mathcal{O}_F$ ( which implies $\beta\b\subseteq r\mathfrak{e}$) such that $\beta\equiv 1\mod{\p\overline{\p}\l}$ and 
\begin{equation}\label{nonzero-trace-ver-2}
0\notin\mathrm{Tr}_{F/K}\left(\,(\beta+\mathfrak{g}\mathcal{O}_F)\, \frac{\overline{\eta_{i,\pi}^*}}{m_1}r\,\right)
\quad 
\text{for all $i=1,\dots, n$ and $\pi\in S_{n-1}$,}
\end{equation}
 because in this case if $\{m_i\}$ 
is a basis satisfying \eqref{dec-basis-b--df}, then we replace it for $\{m_1/\beta, m_2,\dots, m_n\}$ and thus \eqref{trace-nonzero-r}
will follow from \eqref{nonzero-trace-ver-2} and the following 
\begin{equation*}\label{chain-ver 3}
(r+\b) \frac{\overline{\eta_{i,\pi}^*}}{m_1}\beta
=
(r\beta+\beta\b) \frac{\overline{\eta_{i,\pi}^*}}{m_1}
\subseteq
(r\beta+r\mathfrak{e}\mathcal{O}_F) \frac{\overline{\eta_{i,\pi}^*}}{m_1}
\subseteq
(\beta+\mathfrak{g}\mathcal{O}_F) \frac{\overline{\eta_{i,\pi}^*}}{m_1}r.
\end{equation*}

The existence of this $\beta$ can be proven as follows.
For any $x\in F- \{0\}$ we have that $T_x:=\{y\in \mathcal{O}_F\,:\, \mathrm{Tr}_{F/K}(yx)\in \mathrm{Tr}_{F/K}(x\mathfrak{g}\mathcal{O}_F)\}$ is a free $\mathcal{O}$-module of $F$ with $\mathfrak{g}\mathcal{O}_F\subseteq T_x \subseteq \mathcal{O}_F$  such that $[\mathcal{O}_F:T_x]_{\mathcal{O}}=\mathfrak{g}$  and $[T_x:\mathfrak{g}\mathcal{O}_F]_{\mathcal{O}}=\mathfrak{g}^{n-1}$.
Indeed, $[\mathcal{O}_F:\mathfrak{g}\mathcal{O}_F]_{\mathcal{O}}=\mathfrak{g}^{n}$ and $T_x$  is the kernel of the surjective map
\[
\mathcal{O}_F\to 
\mathrm{Tr}_{F/K}(x\mathcal{O}_F)/\mathfrak{g}\mathrm{Tr}_{F/K}(x\mathcal{O}_F)\ :\ 
y\mapsto  \mathrm{Tr}_{F/K}(yx).
\]

Let $x_{i,\pi}:=r\,\overline{\eta_{i,\pi}^*}/m_1$, for $i=1,\dots,n$ and $\pi\in S_{n-1}$.
Since $[\mathcal{O}_F:\mathfrak{g}\mathcal{O}_F]=N_{K/\Q}(\mathfrak{g})^n$ and $[T_x:\mathfrak{g}\mathcal{O}_F]=N_{K/\Q}(\mathfrak{g})^{n-1}$, then $\bigcup_{i,\pi} T_{x_{i,\pi}}/\mathfrak{g}\mathcal{O}_F$ has at most $n!N_{K/\Q}(\mathfrak{g})^{n-1}$ elements. Since we are assuming $n!<N_{K/\Q}(\mathfrak{g})$, then $\mathcal{O}_F/\mathfrak{g}\mathcal{O}_F \neq \bigcup_{i,\pi} T_{x_i}/\mathfrak{g}\mathcal{O}_F$, thus there exists a $\theta\in \mathcal{O}_F$ such that
\[
0\notin\mathrm{Tr}_{F/K}\big(\,(\theta+\mathfrak{g}\mathcal{O}_F)\,x_{i,\pi}\,\big)
\quad \text{for all $i=1,\dots, n$ and $\pi\in S_{n-1}$. }
\]

On the other hand, note that if $\mathfrak{B}$ is a prime ideal of $F$ that divides $\mathfrak{e}r\b^{-1}$, then it must be relatively prime to $\mathfrak{e}$; otherwise we would get the contradiction $\mathfrak{e}\mathfrak{B}^{-1}=\mathcal{O}_F\cap \b r^{-1}$.
Since we are assuming that $\mathfrak{g}|\mathfrak{e}$, then the ideal
 $\mathfrak{e}r\b^{-1}$ is coprime to $\mathfrak{g}$ and, furthermore, we may assume that it is also coprime to $\p\overline{\p}\l$, for if not we can find an $r'\in r+\mathfrak{b}$ 
 that is prime to $\p\overline{\p}\l$; so that $\mathfrak{e}r'\b^{-1}$ is too. Indeed,  let $t\in F$ 
such that $t\mathcal{O}_F=\mathfrak{e}\mathfrak{b}^{-1}\mathfrak{s}$ for some integral ideal $\mathfrak{s}$ of $F$ prime to $\p\overline{\p}\l$; note that $t$ is prime to $\p\overline{\p}\l$. Let $\mathfrak{h}:=\mathrm{g.c.d}\{\mathfrak{e},\p\overline{\p}\l\}$, 
then, by the Chinese Remainder Theorem, we can find an $x\in \mathcal{O}_F$ prime to $\p\overline{\p}\l\mathfrak{h}^{-1}$ and such that $x\equiv rt \mod \mathfrak{b}t=\mathfrak{e}\mathfrak{s}$. This $x$ is actually prime to $\p\overline{\p}\l$, for if  $x$ and $\mathfrak{h}$ have some factor in common, then it would divide $r$, and so $\mathfrak{e}r\mathfrak{b}^{-1}$ (since $\mathfrak{e}\mathfrak{b}^{-1}$ is prime to $\p\overline{\p}\l$). By the observation at the beginning of this paragraph,  this factor must be trivial.
Thus,  $r'=xt^{-1}\in r+\mathfrak{b}$ is prime to $\p\overline{\p}\l$.

  Therefore, once again by the Chinese Remainder Theorem, we can find a $\gamma\in \mathcal{O}_F$
such that 
\[
\gamma \equiv -\theta \mod{r\mathfrak{e} \b^{-1}},\
\gamma\equiv 0 \mod{\mathfrak{g}},\
\gamma \equiv 1-\theta \mod{\p\overline{\p}\l}. 
\]
Thus, $\beta=\theta+\gamma$ will satisfy the above requirements.

\end{proof}

\subsection{Integrality of the special values}
\label{Integrality of the special values}

Let $\f_0$ be the integral ideal  of $F$  which we assume is divisible  by an integral ideal $\mathfrak{g}$ of $K$ coprime to $\l$ such that $n!<N_{F/\Q}(\mathfrak{g})$ and $w_{\mathfrak{g}}=1$.
Let $p>3$
be a rational prime which splits in $K/\Q$ as $p=\p\overline{\p}$ and is relatively prime to $\l\f_0\mathfrak{D}_F$.  Let $\f$ be an integral ideal as defined in
 \Cref{The smoothed partial $L$-function},  coprime to $\p$ and divisible by $\f_0$. We choose for $\f_1$ and $\mathbf{a}$ in \Cref{Complex periods}  the ideals
\begin{equation*}\label{defin-f1-a}
\f_1:=\mathfrak{g}
\quad \text{and}\quad 
\mathbf{a}:=\mathfrak{l}.
\end{equation*}
Let $\varphi$, $\Omega_{\infty}$, $\Omega_{\infty}^*$, $L$
and  $E(\C)\simeq \C/L$ be as in  \Cref{Complex periods} . 
We  note that we can express the character $\lambda=\lambda_{F,k,l}$ in terms of $\varphi$ in the following way: 
\[
\lambda(x)=\overline{\psi(x) }^k\psi(x)^{-l},
\quad \text{for all $x\equiv 1 \mod \f$,}
\]
where $\psi=\varphi\circ N_{F/K}.$

For these periods we have, from 
\Cref{parametrization-dual-smoothed-L-function},  
\begin{equation}\label{special-value-cocycle-side}
[U_\f:V_\f]
\cdot 
 \Gamma(l)^n
\cdot
\frac{(2\pi i)^{nk}}
{\Omega_{\infty}^{nk}\Omega_{\infty}^{*nl}}
\cdot \L_{\f,\c}(\lambda,\mathfrak{b},r,r^*,0)
\,
=\,
C_\c\cdot \tilde{\Phi}_\l(\mathfrak{E}^*,Q^*,U^*,(\varXi^*)_\l),
\end{equation}
where $C_\c=C_\c(\lambda,\mathfrak{b},r,r^*)=W_{\mathfrak{c}}
    \det(\overline{M})\Omega^n_{\infty}\Omega_{\infty}^{-*n}\in K^{\mathrm{ab}}$; recalling that $\Omega_{\infty}^*\Omega_{\infty}^{-1}=-2i \mathrm{Vol}(\mathbf{a})$.
    Furthermore, for $\mathfrak{b}=\mathfrak{a}\f^{-1}$, $r=1$ and $r^*=0$, we have
\begin{equation}\label{special-value--cocycle-param-EK-co}
[U_\f:V_\f]
\cdot 
 \Gamma(l)^n
\cdot
\frac{(2\pi i)^{nk}}
{\Omega_{\infty}^{nk}\Omega_{\infty}^{*nl}}
\cdot 
L_{\f,\c}(\aa,\chi,0)
=
 \big\langle\, \big[\tilde{\Phi}_{\l}\big],\,\big[\mathfrak{Z}_{\mathfrak{a},\f,\chi,\c}\big]\,\big\rangle,
\end{equation}    
where   $\mathfrak{Z}_{\mathfrak{a},\f,\chi,\c}=\mathfrak{E}^*\otimes f^*\in \Z[\Gamma_\l^n]\otimes_{\Gamma_\l}\mathcal{D}_{\overline{\Q}}^{\vee}$ 
 for  $f^*=\chi(\mathfrak{a})\cdot C_\c\cdot f_{(Q^*,U^*,(\varXi^*)_\l)}$.
 
 We are now ready to deduce the algebraicity and integrality of the critical values.
\begin{cor}
\label{integ-smoothed}
The special value
\be\label{special value-L-at-0}
[U_\f:V_\f]
\cdot 
 \Gamma(l)^n
\cdot
\frac{(2\pi i)^{nk}}
{\Omega_{\infty}^{nk}\Omega_{\infty}^{*nl}}
\cdot \L_{\f,\c}(\lambda,\mathfrak{b},r,r^*,0)
\ee
is algebraic; more specifically, it belongs to $K^{\mathrm{ab}}$. Moreover,  if
$\mathfrak{g}$ divides $\mathfrak{e}:=\b r^{-1}\cap \mathcal{O}_F$
and $\mathfrak{b}\mathfrak{e}^{-1}$ is prime to $\p\overline{\p}\l$, then   \eqref{special value-L-at-0} is $\p$-integral.
In particular,  the special value
\begin{equation*}\label{special value-L-at-0-particular}
[U_\f:V_\f]
\cdot 
 \Gamma(l)^n
\cdot
\frac{(2\pi i)^{nk}}
{\Omega_{\infty}^{nk}\Omega_{\infty}^{*nl}}
\cdot 
L_{\f,\c}(\aa,\chi,0)
\end{equation*}
is $\p$-integral for every integral ideal $\mathfrak{a}$ of $F$ prime to $\p\overline{\p}\l$.
\end{cor}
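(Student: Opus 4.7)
The plan is to combine the cocycle parametrization \eqref{special-value-cocycle-side} with the integrality of $\tilde{\Phi}_\l$ established in \Cref{congruence-psi}. The algebraicity is immediate: by \Cref{algeb-smoothed-psi-l-1}, $\tilde{\Phi}_\l$ takes values in $K^{\mathrm{ab}}$, and the scaling factor
$C_\c = W_\c \cdot \det(\overline{M}) \cdot \Omega_\infty^n/\Omega_\infty^{*n}$
lies in $K^{\mathrm{ab}}$ as well, since $\Omega_\infty^n/\Omega_\infty^{*n} = (-2i\,\mathrm{Vol}(\mathbf{a}))^{-n} \in \Q$, $\det(\overline{M}) \in K(\f_1)$, and $W_\c$ is an algebraic number. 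This proves the first claim.

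For the $\p$-integrality under the hypotheses $\mathfrak{g} \mid \mathfrak{e}$ and $\b\mathfrak{e}^{-1}$ coprime to $\p\overline{\p}\l$, I first apply \Cref{condition-u-important} to choose a basis $\{m_i\}$ of $F/K$ satisfying both the decomposition \eqref{dec-basis-b--df} at $\l$, $\p$, $\overline{\p}$ and the non-vanishing condition \eqref{condition-u-important-sigma} for every $\pi \in S_{n-1}$ with $\det(\sigma_\pi) \neq 0$. With this basis, $\overline{\det(M)}$ is $\p$-integral by \eqref{det-M-p-integral}, and the remaining factors of $C_\c$ are manifestly $\p$-integral under the standing hypotheses (in particular $p \nmid \mathfrak{d}_F$ and $\b\mathfrak{e}^{-1}$ prime to $\p\overline{\p}$), so $C_\c$ itself is $\p$-integral.

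It then suffices to prove the $\p$-integrality of $\tilde{\Phi}_\l(\mathfrak{E}^*, Q^*, U^*, (\varXi^*)_\l)$. Expanding $\mathfrak{E}^* = \rho \sum_\pi \mathfrak{A}_{M^*, \pi}$ via \eqref{dual-cycle}, the terms with $\det(\sigma_\pi) = 0$ vanish by the very definition of $D$; for the remaining permutations I invoke \Cref{congruence-psi}, which reduces the problem to checking conditions \eqref{p-adic-condition-tuple} for the dual tuple $(Q^*, U^*, (\varXi^*)_\l)$ together with the cocycle element $\mathfrak{A}_{M^*,\pi}$. The first condition follows from \eqref{dec-basis-b--df}, which makes $\Xi$ and $\Xi^*$ integral at $\overline{\p}$, and the third is a routine check from the explicit form $Q^* = (N_{M^{-t}}(X)^{l-1}, N_{\overline{M}}(X)^k)$ combined with the $\p$-integrality of $\overline{M}$ and $M^{-t}$. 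The main obstacle, and the reason for the technical work carried out in \Cref{condition-u-important}, is the second condition, which reduces to $(u + \p^{-\infty}\Xi)\sigma_\pi^* \subset K^{\times n}$: since $\p$ is coprime to $\l$, the lattice appearing in \eqref{p-adic-condition-tuple} for the dualized tuple agrees with $\p^{-\infty}\Xi$ in its $\p$-primary component, and since $\p$ is coprime to $\mathfrak{e}$ one has $\p^{-\infty} \subset \mathcal{O}_{(\mathfrak{e})}$; the required non-vanishing is then exactly \eqref{condition-u-important-sigma}. With all three conditions verified, \Cref{congruence-psi} yields the claimed $\p$-integrality. The particular case for $L_{\f,\c}(\aa, \chi, 0)$ is recovered by specializing to $\b = \aa^{-1}\f$, $r = 1$, $r^* = 0$, for which $\mathfrak{e} = \f$ and $\b\mathfrak{e}^{-1} = \aa^{-1}$, so that the coprimality conditions on $\aa$ prime to $\p\overline{\p}\l$ match the general hypothesis.
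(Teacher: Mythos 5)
Your proposal is correct and follows essentially the same route as the paper: algebraicity from \Cref{algeb-smoothed-psi-l-1} together with $C_\c\in K^{\mathrm{ab}}$, then $\p$-integrality of $C_\c$ via \eqref{condition for p-integral} and \eqref{det-M-p-integral}, and finally \Cref{congruence-psi} applied to $(Q^*,U^*,(\varXi^*)_\l)$ after verifying \eqref{p-adic-condition-tuple}, with \Cref{condition-u-important} supplying the non-vanishing condition and the same specialization $\b=\aa^{-1}\f$, $r=1$, $r^*=0$ for the last claim. The only slight imprecision is your justification of the third condition: the paper checks it not via $\p$-integrality of $\overline{M}$ and $M^{-t}$ but by rewriting $Q^*\big(\overline{U^*+\varXi^*+(\varXi^*)^\l}\big)$ as $N_{F/K}(\overline{-r^*+\b^*})^{l-1}\cdot\overline{N_{F/K}(r+\c^{-1}\b)}^k$ and using that $\f$ is coprime to $\p$ and $\mathfrak{e}^{-1}\b$ is coprime to $\p\overline{\p}\l$ — a routine computation, as you say, but one carried out on norms of ideal translates rather than on the matrix $M$.
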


\begin{proof}

 The first part of the statement, namely that \eqref{special value-L-at-0}
is algebraic,
follows from  \eqref{special-value-cocycle-side} and  \Cref{algeb-smoothed-psi-l-1}. 

As for the $\p$-integrality of \eqref{special value-L-at-0},  first observe 
that the factor $C_\c$ of \eqref{special-value-cocycle-side} is $\p$-integral by \eqref{condition for p-integral} and \eqref{det-M-p-integral}. Thus it remains to apply \Cref{congruence-psi} to 
$\tilde{\Phi}_{\l}(\mathfrak{A}_{M^*,\pi})$ evaluated at  the tuple $(Q^*,U^*,(\varXi^*)_\l)\in \mathbf{G}_n^\l$, for which we have to verify additionally  that
 \begin{equation}\label{-u-conditions}
\bigg(\ u+\p^{-\infty} \big((\Xi^*)_\l\big)^*\ \bigg)\sigma_{\pi}^*\subset K^{\times n},\ \text{and}
  \end{equation}
\begin{equation}\label{condition-dual-psi}
Q^*\big(\,\overline{U^*+\varXi^*+(\varXi^*)^\l} \,\big) 
\subset \mathcal{O}_{(\p)}.
\end{equation}

The condition 
\eqref{-u-conditions} follows from  \Cref{condition-u-important} since $\big((\Xi^*)_\l\big)^*\subset \overline{\l}^{-1}\Xi$ and $\overline{\l}$ is prime to $\f$. 
Now, by the very definition, we have
\[
Q^*\big(\,\overline{U^*+\varXi^*+(\varXi^*)^\l} \,\big) 
=
N_{F/K}\big(\,\overline{-r^*+\b^*}\,\big)^{l-1}
\cdot
\overline{N_{F/K}(\,r+\c^{-1}\b\,)}^k.
\]
 Since $r\in \mathfrak{e}^{-1}\mathfrak{b}$, $r^*\in \overline{\f}^{-1}(\mathfrak{c}^{-1}\mathfrak{b})^*$, then
 \[
 Q^*\big(\,\overline{U^*+\varXi^*+(\varXi^*)^\l} \,\big)
  \subset 
    N_{F/K}\big(\,\f^{-1}\overline{\b^*}\,\big)^{l-1}
\cdot
\overline{N_{F/K}(\,\mathfrak{e}^{-1}\c^{-1}\b\,)}^k,
 \]
which is contained in $\mathcal{O}_{(\p)}$ since $\f$ is coprime to $\p$, $\mathfrak{e}^{-1}\mathfrak{b}$ is coprime to $\p\overline{\p}\l$, and thus $\mathfrak{b}$ and
 $\overline{\mathfrak{b}^*}=\mathfrak{b}^{-1}\mathfrak{D}_F^{-1}$ are corpime to $\p$ as well. Thus, \eqref{condition-dual-psi} follows.

Note, in particular, that for $\mathfrak{b}=\aa^{-1}\f$, $r=1$ and $r^*=0$ we have
$\mathfrak{e}=\f$
\[
 Q^*\big(\,\overline{U^*+\varXi^*+(\varXi^*)^\l} \,\big)
  \subset 
    N_{F/K}\big(\,\f^{-1} \aa \mathfrak{D}^{-1}_F\,\big)^{l-1}
\cdot
\overline{N_{F/K}(\,\c^{-1}\aa^{-1}\,)}^k,
 \]
thus the second part of the corollary follows.

\end{proof}


\section{$p$-adic measures and $p$-adic zeta functions}
\label{p-adic measures and p-adic zeta functions}

In this section, we describe the construction of $p$-adic measures from the Eisenstein-Kronecker cocycle, following the method outlined in \cite{CD}. More specifically, we obtain a measure-valued cocycle $\mu_\l$ and, using the results of  \Cref{parametrization-dual-smoothed-L-function}, derive the interpolation of the critical values of $L^*_{\f_0,\c}$.
\subsection{$p$-adic measures associated to $\Phi_{\l}$}
Let $\p$ be  a prime ideal of $K$ as in \Cref{sec-p-adic-periods} and let $\mathcal{O}_{K_{\overline{\p}}}$ be the ring of integers of the $\overline{\p}$-adic completion $K_{\overline{\p}}$ of $K$ at the prime $\overline{\p}$. 
Let $\varXi_\l=\Xi_\l\times \Xi\in \mathcal{W}_{n,\l}$ such that 
\[
X:=  \varXi_\l  \otimes_{\mathcal{O}}  \mathcal{O}_{K_{\overline{\p}}}
\simeq 
\mathcal{O}_{K_{\overline{\p}}}^{2n}.
\]

For $\mathfrak{A}\in \Gamma_\l^n$ and $U\in (\l\mathcal{O}_{(\l)}\times \mathcal{O}_{(\l)}^{n-1})\times K^{n}$, we  define a measure 
$\mu_\l(\mathfrak{A},U)=\mu_\l(\mathfrak{A},U,\varXi_\l)$ on
$
X_{U}:=  U+X \subset K_{\overline{\p}}^n\times  
K_{\overline{\p}}^n
$
as follows. 
Let $\sigma$ be the matrix whose columns are the first columns of the matrices in the tuple $\mathfrak{A}$. If $\det(\sigma)=0$, then $\mu_\l(\mathfrak{A},U)$ is the zero measure. Suppose then that $\det(\sigma)\neq 0$.

  Let $\varUpsilon_\l=\Upsilon_\l\times \Upsilon^* \in \mathcal{W}_{n,\l}$ such that
  \[
  [\Xi: \Xi\cap \Upsilon]_{\mathcal{O}}\subset \overline{\p}^{\infty}
  \quad \text{and}\quad
   [\Upsilon: \Upsilon\cap \Xi]_{\mathcal{O}}\subset \p^{\infty}. 
  \]
  This implies 
$
  Y:=\varUpsilon_\l\otimes_{\mathcal{O}} 
  \mathcal{O}_{K_{\overline{\p}}}=(\varXi_\l\cap \varUpsilon_\l) \otimes_{\mathcal{O}} \mathcal{O}_{K_{\overline{\p}}}
 \subset 
X.
$
 For $V\in U+\varXi_\l$ we define
$
Y_{V} 
:=  \, V+ Y$.
 These sets  form a  basis of compact open subsets of 
$X_{U}$ and we define the measure
\[
\mu_\l(\a,U)(Y_{V})
=
\tilde{D}_{\l,U+ \varXi_\l}
\left(\sigma(1),V,\varUpsilon_\l\right)
\in \C_p.
\]
The distribution relation from \Cref{lemma-decomp-phi-Xi-Upsilon} and \Cref{D-is-measure-rem} imply that this is indeed a measure.
Note that if $\big(\,(1,1),U,\varXi_\l\,\big)$ satisfies \eqref{p-adic-condition-tuple}, then the measure is
integral, i.e., for all sets $Y_{V}$ we have
\[
\mu_{\l}(\a,U)(Y_{V})
\in \widehat{\mathcal{O}}.
\]

\begin{cor}\label{general-region-integration}
Let $Y_{V}$ as above and let $Q\in H^2$, then  we have 
\begin{align}\label{holas}
\frac{\eta_{\p}^{\deg q}}{\Omega_{\p}^{\deg q^*}}
\int_{Y_{V}} 
 Q(\overline{X})
\, 
d\mu_\l(\a,U)= 
 \tilde{D}_{\l,U+\Xi_\l}(\sigma(1),Q,V,\varUpsilon_\l),
\end{align}
  where 
  $Q(\overline{X})=Q(\overline{x},\overline{x^*})=q(\overline{x})q^*(\overline{x^*})$.
\end{cor}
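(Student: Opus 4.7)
The plan is to approximate the integral on the left-hand side of \eqref{holas} by Riemann-type sums over $\p$-adic partitions of $Y_V$ and to identify those sums via \Cref{congruence-psi}. For integers $\mathtt{M}, \mathtt{N} \geq 0$, let $\varTheta := \varXi_{\mathtt{M}, \mathtt{N}}$ and let $\varTheta_\l$ denote its $\l$-smoothing. The compact open sets $Y_{W, \varTheta_\l} := W + \varTheta_\l \otimes_{\mathcal{O}} \mathcal{O}_{K_{\overline{\p}}}$, as $W$ runs over a complete set of representatives of the coset $V + (\varXi_\l \cap \varUpsilon_\l)$ in $\mathcal{K}_{n,\l}/\varTheta_\l$, form a disjoint covering of $Y_V$. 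Let $\tilde{Q}_{\mathtt{M},\mathtt{N}}$ be the locally constant step function on $Y_V$ that takes the value $Q(\overline{W})$ on each $Y_{W, \varTheta_\l}$.

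By the very definition of $\mu_\l(\mathfrak{A}, U)$ on the basic compact opens, together with the distribution relation of \Cref{lemma-decomp-phi-Xi-Upsilon} (which is what ensures these values patch into a well-defined measure), the integral of the step function equals
\[
\int_{Y_V} \tilde{Q}_{\mathtt{M},\mathtt{N}}(X) \, d\mu_\l(\mathfrak{A}, U) = \sum_W Q(\overline{W}) \cdot \tilde{D}_{\l,\, U+\varXi_\l}\big(\sigma(1), W, \varTheta_\l\big).
\]
On the other hand, since $\Phi_\l(\mathfrak{A}) = D_\l(\sigma(1))$, applying \Cref{congruence-psi} to this choice of $\varTheta_\l$ gives
\[
\tilde{D}_{\l,\, U+\varXi_\l}(\sigma(1), Q, V, \varUpsilon_\l) \equiv \frac{\eta_\p^{\deg q}}{\Omega_\p^{\deg q^*}} \sum_W Q(\overline{W}) \cdot \tilde{D}_{\l,\, U+\varXi_\l}\big(\sigma(1), W, \varTheta_\l\big) \pmod{\pi^{\min\{\mathtt{M},\mathtt{N}\}-\delta}\widehat{\mathcal{O}}},
\]
where the integer $\delta$ depends only on $(\sigma(1), Q, U, \varXi, \varLambda)$.

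Letting $\mathtt{M}, \mathtt{N} \to \infty$, the step functions $\tilde{Q}_{\mathtt{M},\mathtt{N}}$ converge uniformly on the compact set $Y_V$ to $X \mapsto Q(\overline{X})$: indeed, for $X \in Y_{W,\varTheta_\l}$ one has $\overline{X} - \overline{W} \in \pi^{\min\{\mathtt{M},\mathtt{N}\}} \widehat{\mathcal{O}}^{2n}$ (conjugation exchanging $\p$ and $\overline{\p}$), and $Q$ is a polynomial. Hence the Riemann sums converge to $\frac{\eta_\p^{\deg q}}{\Omega_\p^{\deg q^*}} \int_{Y_V} Q(\overline{X}) \, d\mu_\l(\mathfrak{A}, U)$, the $\p$-adic error in the displayed congruence tends to zero, and the congruence becomes an equality in $\C_p$, yielding \eqref{holas}.

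The main difficulty is controlling the $\p$-adic error uniformly as the partition is refined; what makes this work is precisely the assertion in \Cref{congruence-psi} that $\delta$ is independent of $V$ and of the auxiliary lattice $\varUpsilon_\l$ used to subdivide $Y_V$. A secondary bookkeeping point is matching the indexing set of the Riemann partition with the sum appearing in \Cref{congruence-psi}; both are parametrised by the coset $V + (\varXi_\l \cap \varUpsilon_\l)$ modulo $\varTheta_\l$, by the bijections recorded in \eqref{l-properties}.
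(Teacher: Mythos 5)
Your proposal is correct and follows essentially the same route as the paper's own proof: subdivide $Y_V$ by the translates of $\varTheta_\l=(\varXi_{\mathtt{M},\mathtt{N}})_\l$, identify the resulting Riemann sum with $\sum_W Q(\overline{W})\,\tilde{D}_{\l,U+\varXi_\l}(\sigma(1),W,\varTheta_\l)$ via the definition of $\mu_\l$, apply \Cref{congruence-psi} to match this with the right-hand side up to $\pi^{\min\{\mathtt{M},\mathtt{N}\}-\delta}$, and let $\mathtt{M},\mathtt{N}\to\infty$. The only point the paper makes explicit that you leave implicit is the preliminary reduction, by $K$-linearity in $Q$, to the case $Q(\overline{U+\varXi+\varXi^\l})\subset\mathcal{O}_{(\p)}$ so that the hypothesis \eqref{p-adic-condition-tuple} of \Cref{congruence-psi} is actually satisfied.
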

\begin{proof}
By linearity, we may assume $Q(\overline{U+\varXi+\varXi^\l})\subset \mathcal{O}_{(\p)}$, so that
$(Q,U,\varXi_\l)$ satisfies \eqref{p-adic-condition-tuple}. Now, let $\mathtt{M},\mathtt{N}$ be nonnegative integers large enough such that
$\overline{\pi}^{\mathtt{M}}\Xi_\l\subset \Xi_\l\cap \Upsilon_\l$ and $\overline{\pi}^{\mathtt{N}}\Xi^*\subset \Xi^*\cap \Upsilon^*$, and consider the $\mathcal{O}$-module $\varTheta_\l:=(\varXi_\l)_{\mathtt{M},\mathtt{N}}\in \mathcal{W}_{n,\l}$. 
Then, for $Z:= \varTheta_{\l}\otimes_{\mathcal{O}}  \mathcal{O}_{K_{\overline{\p}}}$, we have the isomorphism
\[
\varXi_\l\cap \varUpsilon_\l\,\big/ \varXi_\l\cap \varTheta_{\l}
\ \xrightarrow{\sim}\ 
 Y \,\big/ 
Z
\]
from which we
obtain  the following decomposition:
\[
Y_{V}= 
\bigsqcup_{H\in \varXi_\l\cap \varUpsilon_\l\,\big/ \varXi_\l\cap \varTheta_{\l}}
Z_{V+H},
\]
where $Z_{V+H}$ denotes the translate of $Z$ by $V+H$. Letting $W=V+H$, we see that as 
as $H$ suns through a set  of representatives of classes of $ \varXi_\l\cap \varUpsilon_\l\,\big/ \varXi_\l\cap \varTheta_{\l}$, then $W$ runs through a set of repesntatives of classes $\mathcal{K}_\l/\varTheta_\l$ such that $W\equiv V \,\mathrm{mod}\, \varXi_\l\cap \varUpsilon_\l$.

Using this decomposition, we can approximate the left-hand of \eqref{holas}  side via Riemann sums as follows: for
nonnegative integers $\mathtt{M}$ and  $\mathtt{N}$ sufficiently large we have that
the left-hand side of \eqref{holas} is congruent to
\[
\frac{\eta_{\p}^{\deg q}}{\Omega_{\p}^{\deg q^*}}
\sum_{
\substack{
W\in \mathcal{K}_\l/\varTheta_\l\\
W\equiv V \,\mathrm{mod}\, \varXi_\l\cap \varUpsilon_\l
}
}
Q(\,\overline{W}\,)\,
\mu_{\l}(\mathfrak{A},U)(Z_{W})
\]
modulo $\pi^{\min \{ \mathtt{M}, \mathtt{N}\}-\epsilon }  \widehat{\mathcal{O}}$,
 for some  nonnegative integer $\epsilon$ depending on the coefficients of $P$.
By  the very definition of $\mu_{\l}$,  this is equal to
\begin{equation}\label{expanded-sum-psi-0}
\sum_{
\substack{
W\in \mathcal{K}_\l/\varTheta_\l\\
W\equiv V \,\mathrm{mod}\, \varXi_\l\cap \varUpsilon_\l
}
}
Q(\,\overline{W}\,)\cdot 
\frac{\eta_{\p}^{\deg q}}{\Omega_{\p}^{\deg q^*}}
\cdot
\tilde{D}_{\l,U+ \varXi_\l}
\left(\sigma(1),W,\varTheta_\l\right)
\end{equation}

On the other hand, by \Cref{congruence-psi}, the right-hand side of \eqref{holas} 
 is congruent to \eqref{expanded-sum-psi-0}  
 modulo $\pi^{\min \{ \mathtt{M}, \mathtt{N}\}-\delta }  \widehat{\mathcal{O}}$,
for some nonnegative integer $\delta$ depending only on $(Q,U,\varXi,\varLambda)$. Therefore, the left-hand and right-hand side of \eqref{holas} are congruent modulo $\pi^{\min \{ \mathtt{M}, \mathtt{N}\}-
\max\{\epsilon,\delta\} }  \widehat{\mathcal{O}}$. Letting $\mathtt{M},\mathtt{N}\to \infty$ the identity follows.

\end{proof}

Let $\mathbb{M}_{p}$ denote the space of functions that assigns to each 
$U\in  K^{2n}_\l$ a $\C_p$-valued measure 
$\mu(U)$ on $X_U:=U+ \mathcal{O}_{K_{\overline{\p}}}^{2n}$
such that 
\begin{equation*}\label{measure-scalar}
\overline{\pi}^{n}\mu\left(U\pi^{-1}\right)
\left(\,Y \pi^{-1}\,\right)=\mu(U)(Y)
\quad \text{and}\quad 
\overline{\pi}^{n}\mu\left(U\overline{\pi}\right)
\left(\, Y \overline{\pi}\,\right)=\mu(U)(Y)
\end{equation*}
for all $Y\subset X_{U}$, where $Yt=\{(yt,y^*\overline{t}^{-1}   ):(y,y^*)\in Y\}$ for $t\in K^{\times}$.

Let
\[
\Gamma_{\l,p}:=
\Gamma_0\big(\,\l \mathcal{O}_{K}[1/p]\,\big)
=
\Gamma_{\l}\,\cap \, \mathrm{GL}_n\big(\,\mathcal{O}[1/p]\,\big).
\]
The space $\mathbb{M}_{p}$ has the structure of a $\Gamma_{\l,p}$--module given by
\[
(\gamma \mu)(U)(Y):=
[ \mathcal{O}_{K_{\overline{\p}}}^n : \mathcal{O}_{K_{\overline{\p}}}^n A^* ]\cdot
 \det(A) \cdot 
\mu(UA)(Y A),
\]
where $A=\frac{\overline{\pi}^r}{\pi^{s}}  \gamma$ for some nonnegative integers $r$ and $s$ large enough so that  $A$ and $ A^*$ are in $M_n(\mathcal{O}[1/\p])$; in particular, they are in $M_n(\mathcal{O}_{K_{\overline{\p}}})$ which implies
 $ \mathcal{O}_{K_{\p}}^{n}A\times \mathcal{O}_{K_{\p}}^{n}A^* \subset \mathcal{O}_{K_{\p}}^{2n}$, and where $Y A:=\{(yA,y^*A^*)\,:\,(y,y^*)\in Y\}$.

The cocycle $\Phi_{\l}$  defines the homogeneous $(n-1)$-cocycle
\begin{align*}
\mu_{\l}\,:\,&\Gamma_{\l,p}\to \mathbb{M}_{p}\\
 &\mathfrak{A}\mapsto \mu_\l(\mathfrak{A},\cdot)
\end{align*}
Indeed, the homogeneity is a consequence of \Cref{general-region-integration}.

\subsection{$\p$-adic interpolation of  special values}
\label{$p$-adic zeta functions}

We are now ready to prove the second main result of this paper: the interpolation of the critical values of  $L_{\f_0,\c}^*$
  from the measure-valued cocycle $\mu_\l$ using the method in \cite{CD}. To simplify the exposition, we will divide the proof into two parts. First, we will prove the interpolation of the values of $\mathcal{L}_{\f,\c}$  (cf. \Cref{Interpolation of the values of the special}). Then, we will deduce the interpolation of the values of $L_{\f_0,\c}^*$
  (cf. \Cref{Interpolation of the special values of L-star}).

We now fix the following notation for this section: Let $\Omega_{\infty}$, $\Omega_{\infty}^*$, $\f_0$, $\mathfrak{g}$, and $p=\p\overline{\p}$ be as defined in  \Cref{Integrality of the special values}. Let $\Omega_{\p}$
and $\Omega_{\p}^*$ be the $\p$-adic periods defined in  
\Cref{Integrality properties of psi-lk}.
Write  the integral ideal $\f$ of $F$ as $\f_0 \f_\p\f_{\overline{\p}}$,
where $f_0$ the prime to $p$-part of $\f$, and  $\f_\p$ and  $\f_{\overline{\p}}$ contain only primes above $\p$ and  $\overline{\p}$, respectively.

\subsubsection{Interpolation of the values of  $\mathcal{L}_{\f,\c}$}
\label{Interpolation of the values of the special}
Let $\lambda=\lambda_{k,\l}$, $\mathfrak{b}$, $r\in \mathfrak{b}\f_0^{-1}$ and $r^*\in (\c^{-1}\b)^* \overline{\f}_0^{-1}$ satisfying \eqref{condition-gamma}.   Furthermore, for $\mathfrak{e}=\b r^{-1}\cap \mathcal{O}_F$ we assume that $\mathfrak{g}|\mathfrak{e}$ and $\mathfrak{b}\mathfrak{e}^{-1}$ is prime to $p\l$.
Let $\mathfrak{h}_p$ denote the ideal $\mathfrak{h}_{\p}^{-1}\mathfrak{h}_{\overline{\p}}$,
where $\mathfrak{h}_{\p}$ is an integral ideal of $F$ whose factorization into prime ideals contains only primes  above $\p$ and $\mathfrak{h}_{\overline{\p}}$  only prime ideals  above $\overline{\p}$. 
Let $h\in \mathfrak{b}\mathfrak{h}_{p}\f_0^{-1}\f_{\overline{\p}}^{-1}$ and $h^*\in (\c^{-1}\mathfrak{b}\mathfrak{h}_p)^*\overline{\f}_0^{-1}\overline{\f}_{\p}^{-1}$ such that
\begin{equation}\label{H-R-cong}
(h^*, h)\equiv (r^*,r) \mod{ (\c^{-1}\b )^*\times\b}. 
\end{equation}
Then $\mathcal{L}_{\f,\c}(\lambda,\mathfrak{b}\mathfrak{h}_p,h,h^*,s)$ is well-defined, since $r$ and $r^*$ satisfy \eqref{condition-gamma} and so  $h$ and $h^*$
will satisfy it as well.

We will build an $\widehat{\mathcal{O}}$-valued measure
 $\mu_{r,r^*,\mathfrak{b},\c,\p}$  on $\mathcal{O}_F
  \otimes \Z_p$ that interpolates the values $\mathcal{L}_{\f,\c}(\lambda,\mathfrak{b}\mathfrak{h}_{p},h,h^*,0)$ for all   $\mathfrak{h}_{p}$, $h$ and $h^*$.
Using the identification
 \[
 (\,\mathcal{O}_{F^*}\times 
  \mathcal{O}_F\,)\otimes_{\mathcal{O}} \mathcal{O}_{K_{\overline{\p}}}
\ \simeq \
  \mathcal{O}_F
  \otimes  \Z_p,
  \]
let
$
\mathcal{O}_{\mathfrak{h}_{p}}:=
(\mathfrak{h}_{p}^*\times \mathfrak{h}_{p})\otimes_{\mathcal{O}}\mathcal{O}_{K_{\overline{\p}}}=
(\overline{\mathfrak{h}}_{\p}\times 
\mathfrak{h}_{\overline{\p}})\otimes_{\mathcal{O}}\mathcal{O}_{K_{\overline{\p}}}$
and for  $(h^*,h)\in (\mathcal{O}_{F^*}\times \mathcal{O}_{F})\otimes_{\mathcal{O}}\mathcal{O}_{K_{\overline{\p}}}$
 define
$
\mathcal{O}_{h^*,h,\mathfrak{h}_{p}}
:=(h^*,h)+\mathcal{O}_{\mathfrak{h}_{p}}.
$ Denote by
\begin{equation*}\label{map-N-l-k}
 N_{l,k}: \mathcal{O}_{F}\otimes \Z_p\to \widehat{\mathcal{O}}
 \end{equation*}
the map given by 
$
(x\otimes z)\mapsto z^{n(l-1)}\overline{z}^{nk}N_{F/K}(x)^{l-1}
\overline{N_{F/K}(x)}^k.
$

\begin{thm}\label{p-adic-inter-Lfcb}
There exist a unique $\widehat{\mathcal{O}}$-valued measure
 $\mu_{r,r^*,\mathfrak{b},\c,\p}$ on $\mathcal{O}_{F}\otimes \Z_p$, depending 
 only on $\f$ and the classes $r+\mathfrak{b}$ and $r^*+(\c^{-1}\b)^*$, such that \begin{align*}
 \frac{  1  }
   {   \Omega_{\p}^{nk} \Omega_{\p}^{*n(l-1)}    }
& \int_{\mathcal{O}_{h^*,h,\mathfrak{h}_{p}}} 
 N_{l,k}
\, 
d\mu_{r,r^*,\mathfrak{b},\c,\p}=\\
&
\frac{  e(\,-r|h^*\,) }
      {  N_{F/\Q}(\mathfrak{h}_\p)   }
\cdot
[U_\f:V_\f]
\cdot 
 \Gamma(l)^n
\cdot
\frac{(2\pi i)^{nk}}
{\Omega_{\infty}^{nk}\Omega_{\infty}^{*nl}}
\cdot 
\mathcal{L}_{\f,\c}(\lambda,\mathfrak{b}\mathfrak{h}_{p},h,h^*,0).
  \end{align*}
  \end{thm}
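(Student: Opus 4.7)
The strategy is to realize $\mu_{r,r^*,\b,\c,\p}$ as the pushforward, through an isomorphism of $\overline{\p}$-adic completions, of the measure-valued cocycle $\mu_\l$ evaluated on the cycle $\mathfrak{E}^*$. By \Cref{condition-u-important} (applied to $(\lambda,\b,r,r^*)$, whose hypotheses hold because $\mathfrak{g}\mid\mathfrak{e}$, $\b\mathfrak{e}^{-1}$ is coprime to $p\l$ and $n!<N_{K/\Q}(\mathfrak{g})$) we may choose a basis $\{m_i\}$ of $F/K$ producing matrices $M,M^*$ and a tuple $(Q^*,U^*,\varXi^*_\l)\in\mathbf{G}_n^\l(\lambda^*,\b^*,-r^*,r)$ that satisfies the $\p$-integrality conditions \eqref{p-adic-condition-tuple}, exactly as in the proof of \Cref{integ-smoothed}. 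Let $\iota_m$ denote the isomorphism
\[
\varXi^*_\l\otimes_{\O}\mathcal{O}_{K_{\overline{\p}}}\ \xrightarrow{\sim}\ (\mathcal{O}_{F^*}\times\mathcal{O}_F)\otimes_{\O}\mathcal{O}_{K_{\overline{\p}}}\ \simeq\ \mathcal{O}_F\otimes\Z_p
\]
given by $(y,y^*)\mapsto(ym^*,y^*m)$, and set
\[
\mu_{r,r^*,\b,\c,\p}:=\det(\overline{M})\cdot W_\c\cdot (\iota_m)_*\mu_\l(\mathfrak{E}^*,U^*,\varXi^*_\l),
\]
where the right-hand side is defined by extending $\mu_\l$ linearly along the chain $\mathfrak{E}^*$ of \eqref{dual-cycle}. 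The integrality of this measure follows from \Cref{congruence-psi} together with the $\p$-integrality of $\det(\overline{M})$ (via \eqref{det-M-p-integral}) and of $W_\c$ (since $N_{F/K}(\c)$ is coprime to $p$).

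\textbf{Evaluation of the integral.} Under $\iota_m$, the clopen set $\mathcal{O}_{h^*,h,\mathfrak{h}_p}$ corresponds to $V+Y$, with $V\in U^*+\varXi^*_\l$ the image of $(h^*,h)$ (well-defined by \eqref{H-R-cong}) and $Y=\varUpsilon^*_\l\otimes_{\O}\mathcal{O}_{K_{\overline{\p}}}$ for a lattice $\varUpsilon^*_\l\in\mathcal{W}_{n,\l}$ adapted to the pair $(\b\mathfrak{h}_p,\c^{-1}\b\mathfrak{h}_p)$. A direct computation expresses $Q^*\bigl(\,\overline{\iota_m^{-1}(x^*,x)}\,\bigr)$ as $N_{l,k}$ up to explicit constants involving $\det(\overline{M})$ and $\det(M^*)$. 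Applying \Cref{general-region-integration} termwise along $\mathfrak{E}^*$, then using the distribution relation of \Cref{lemma-decomp-phi-Xi-Upsilon} to reassemble $\tilde{\Phi}_\l$ at the new triple $(Q^*_h,U^*_h,(\varXi^*_h)_\l)$ corresponding to $(\b\mathfrak{h}_p,h,h^*)$, and finally invoking \Cref{parametrization-dual-smoothed-L-function} for that triple, yields the claimed identity after bookkeeping of periods.

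\textbf{Uniqueness, independence and main obstacle.} Uniqueness follows because the clopen sets $\{\mathcal{O}_{h^*,h,\mathfrak{h}_p}\}$ form a basis of the topology of $\mathcal{O}_F\otimes\Z_p$ and the defining formula with $(l,k)=(1,0)$ (for which $N_{1,0}\equiv 1$) determines the measure on each such set. Independence from $\{m_i\}$ and from the representatives $r,r^*$ follows, respectively, from the $\Gamma_\l$-homogeneity of $\mu_\l$ (which makes $(\iota_m)_*\mu_\l(\mathfrak{E}^*,U^*,\varXi^*_\l)$ basis-independent) and from the invariance relation $\phi(Q,U+V',\varXi)=e(u|v'^*)\phi(Q,U,\varXi)$ of \Cref{defin-space-dist}, which absorbs any change of representative into a unit factor. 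The main obstacle will be tracking all period factors: the product $\det(\overline{M})\,W_\c\,(-2\pi i)^{n(l-1)}$ from \Cref{parametrization-dual-smoothed-L-function}, together with the $\Omega_\infty,\Omega_\infty^*$ rescalings implicit in $\tilde{\Phi}_\l$, must recombine via the identity $\Omega_\p^*=(2i\mathrm{Vol}(\mathbf{a})\eta_\p)^{-1}$ into the precise combination $\Omega_\p^{nk}\Omega_\p^{*n(l-1)}\cdot e(-r|h^*)/N_{F/\Q}(\mathfrak{h}_\p)$ appearing in the target formula; the shift factor $e(-r|h^*)/N_{F/\Q}(\mathfrak{h}_\p)$ itself materializes from the exponential translation when moving from $(r,r^*,\b)$ to $(h,h^*,\b\mathfrak{h}_p)$ combined with the rescaling \eqref{prop-dist-main-3}.
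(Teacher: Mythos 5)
Your proposal follows essentially the same route as the paper: the measure is defined as a normalizing constant times the pushforward of $\mu_\l(\mathfrak{E}^*,U^*,\varXi^*_\l)$ under the identification induced by $(m,m^*)$, the integral over $\mathcal{O}_{h^*,h,\mathfrak{h}_p}$ is computed via \Cref{general-region-integration} on the corresponding set $Y_{V^*}$, and the result follows from \Cref{parametrization-dual-smoothed-L-function} together with the translation identity $e(-r|h^*)=e(-h|h^*)\,e((h-r)|r^*)$. The only discrepancy is your normalizing constant $\det(\overline{M})W_\c$, which should be the paper's $C_\c=W_\c\det(\overline{M})\,\Omega_\infty^{n}\Omega_\infty^{*-n}$ (the extra factor is the algebraic, $\p$-integral quantity $(-2i\mathrm{Vol}(\mathbf{a}))^{-n}$) — precisely the period bookkeeping you flag as remaining to be carried out.
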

 \begin{proof}
Let
$(Q^*,U^*,(\varXi^*)_\l)\in \mathbf{G}_{n}^{\l}(\lambda,\mathfrak{b},r,r^*)$ 
satisfying the conditions of \Cref{condition-u-important}. 
 Let $ \Upsilon\in W_{n,\l}$ such that $\mathfrak{b}\mathfrak{h}_{p}=\Upsilon \cdot m$.
Then $\varUpsilon_\l^*=(\Upsilon^*)_\l\times \Upsilon^*\in \mathcal{W}_{n,\l}$
and $(\c^{-1} \mathfrak{b}\mathfrak{h}_{p} )^*\times \mathfrak{b}\mathfrak{h}_{p}=\varUpsilon^*_\l \cdot (m^*,m)$. Furthermore, let
$V\in K^{2n}$ such that $(h,h^*)=V(m,m^*)$, 
so that, by \eqref{H-R-cong},  $V^*-U^*\in (\varXi^*)_{\l} $ and
$
[\Upsilon^*:\Xi^*\cap \Upsilon^*]=[\Xi+\Upsilon:\Upsilon]=
N_{F/\Q}(\mathfrak{h}_{\overline{\p}}).
$
Also, by the conditions on $h-r$ and $h^*-r^*$, we have
\begin{equation}\label{exponen-factor}
e(-r| h^* ) 
=e(-h| h^*  )
\cdot e\big((h-r)|r^*\big).
\end{equation}

We  have
 the bijective map 
\begin{equation*}\label{bijective-map}
 X_{U^*}:=U^*+ \varXi^*_\l\otimes_{\mathcal{O}}\mathcal{O}_{K_{\overline{\p}}}
 \ \to  \
  (\mathcal{O}_{F^*}\times \mathcal{O}_F)   \otimes_{\mathcal{O}} \mathcal{O}_{K_{\overline{\p}}}
\end{equation*}
 given by  $(x^*,x)\mapsto \big(\,x^*m^* , xm\big )$. From this map we have that
$Y:=\varUpsilon^*_\l\otimes_{\mathcal{O}}
\mathcal{O}_{K_{\overline{\p}}}
\, \simeq \,
\mathcal{O}_{\mathfrak{h}_{p}}$ and
$Y_{V^*}:=V^*+ Y
\,\simeq\,\mathcal{O}_{h^*,h,\mathfrak{h}_{p}}$.
Also, $Q^*$ induces the map  
$X_{U^*}\to \widehat{\mathcal{O}}$ given by
$
(x^*,x)\mapsto Q^*(\overline{x^*},\overline{x})=N_{\overline{M^*}}(\overline{x^*})^{l-1}N_{\overline{M}}(\overline{x})^k
$
 which can be identified with the map $N_{k,l}$. With these identifications, we simply take
\[
\mu_{r,r^*,\mathfrak{b},\c,\p}
:=C_\c\cdot  \mu_{\l}(\mathfrak{E}^*,U^*,\varXi^*_\l),
\]
where $C_\c=C_\c(\lambda,\mathfrak{b},r,r^*)$ is the prime to $\p$-integral  constant
from \Cref{Integrality of the special values}.

Thus, by \Cref{general-region-integration},
 \[
 \frac{  1  }
   {   \Omega_{\p}^{nk} \Omega_{\p}^{*n(l-1)}    }
\int_{Y_{V^*}} 
Q^*
\,
d\mu_{r,r^*,\mathfrak{b},\c,\p}
=
C_\c
 \cdot 
\tilde{\Phi}_{\l,U^*+\Xi_\l^*}(\mathfrak{E}^*,Q^*,V^*,\varUpsilon^*_\l).
\]
From 
\Cref{parametrization-dual-smoothed-L-function} and \eqref{exponen-factor}, the result follows by observing that 
\[
e(u^*|v-u)=
e(v-u|u^*)= e\big(\,(h-r)\,\big|\,r^* \,\big).
\]
 
\end{proof}


\subsubsection{Interpolation of the critical values of $L_{\f_0,\c}^*$}
\label{Interpolation of the special values of L-star}

For the Hecke-character $\chi$, there exist characters
\[
\chi_{\f_\p}:(  \mathcal{O}_F\otimes_{\mathcal{O}} 
\mathcal{O}_{K_{ \p } }     )^{\times} 
\to   \overline{\mathbb{Q}}^{\times}
\quad \text{and}\quad
\chi_{\f_{\overline{\p}}}:(  \mathcal{O}_F\otimes_{\mathcal{O}} 
\mathcal{O}_{K_{\overline{\p} } }     )^{\times} 
\to   \overline{\mathbb{Q}}^{\times}
\]
of conductors $\f_\p$ and $\f_{\overline{\p}}$, respectively, 
such that
\begin{equation}\label{chi-on-prinp-ideal}
\chi((a))= \chi_{\f_\p}(a)\, \chi_{\f_{\overline{\p}}}(a)\,\overline{N_{F/K}(a)}^{k} \,N_{F/K}(a)^{-l}
\end{equation}
for every $a\in \mathcal{O}_F$ prime to $\f$ and such that $a\equiv 1\mod{\f_0}$. From the isomorphism $(  \mathcal{O}_F\otimes_{\mathcal{O}} 
\mathcal{O}_{K_{ \p } }     )^{\times}
\simeq \prod_{\mathfrak{P}|\p}\mathcal{O}_{F_\mathfrak{P}}^{\times}$, we can view  $\chi_{\f_\p}$ as the product of characters $\prod_{\mathfrak{P}|\p}\chi_{\mathfrak{P}}$, where $\chi_{\mathfrak{P}}$ is trivial (i.e., $\chi_{\mathfrak{P}}=1$) if and only if $\mathfrak{P}$
coprime to $\f_\p$. We extend $\chi_{\mathfrak{P}}$ to $\mathcal{O}_{F_\mathfrak{P}}$ by zero if $\mathfrak{P}|\f_\p$, and by 1 if $\mathfrak{P}$ is coprime to $\f_\p$.
This allows as to extend $\chi_{\f_\p} $ to $ \mathcal{O}_F\otimes_{\mathcal{O}} 
\mathcal{O}_{K_{ \p } }\simeq \prod_{\mathfrak{P}|\p}\mathcal{O}_{F_\mathfrak{P}}$ by letting $\chi_{\f_\p}=\prod_{\mathfrak{P}|\p}\chi_{\mathfrak{P}}$. Furthermore, note that we can extend $\chi_{\f_p}$ to any $x\in F$ such that $x\in \mathcal{O}_{F_\mathfrak{P}}$
for all $\mathfrak{P}|\f_\p$ by letting $\chi_{\f_p}(x)=\prod_{\mathfrak{P}|\f_\p}\chi_{\mathfrak{P}}(x)$. Finally, we extend the character $\chi_{\f_{\overline{\p}}}$
to $\mathcal{O}_F\otimes_{\mathcal{O}} 
\mathcal{O}_{K_{\overline{\p} }}$,
and $\chi_{\f_\p}^{-1}$ to $\mathcal{O}_F\otimes_{\mathcal{O}} \mathcal{O}_{K_{ \p } }$ in a similar fashion.

Bearing in mind the isomorphism
\begin{equation*}\label{isom-OF-Zp-times}
(\mathcal{O}_F\otimes_{\mathcal{O}} \mathcal{O}_{ K_{\p} }) \times 
(\mathcal{O}_F\otimes_{\mathcal{O}} \mathcal{O}_{  K_{\overline{\p}} })\, \simeq \,\mathcal{O}_F\otimes \Z_p,
\end{equation*}
we define the character $
\chi_p: \mathcal{O}_F\otimes \Z_p \to   \overline{\mathbb{Q}}^{\times}
$
by
\[
\chi_p(x,y)=\chi_{\f_\p}^{-1}(x)\chi_{\f_{\overline{\p}}}(y).
\]

For $y\in \f_{\p}^{-1}\mathfrak{D}_F^{-1}$ with $(y)=\f_{\p}^{-1}\mathfrak{D}_F^{-1}\mathfrak{n}$ for an integral ideal $\mathfrak{n}$ of $F$ prime to $\f_\p$ we define the Gauss sum (cf. \cite{nemchenok1993imprimitive})
\[
\tau_y(\chi_{\f_{\p}}^{-1})
=
\sum_{\theta \in (\mathcal{O}_F/\f_\p)^{\times}}
\chi_{\f_\p}^{-1}(\theta)\cdot 
e^{ 2\pi i   \mathrm{Tr}_{F/\Q}( y \theta   )  }.
\]

Let  $\f_\p=(\delta_0)\mathfrak{C}$ where $\mathfrak{C}$ is an integral ideal of $F$ prime to $\p\overline{\p}\l\f_0$, with $\delta_0\in F$ such that $\delta_0\equiv 1\,\mathrm{mod}^*\,\f_{\overline{\p}}\f_0$. In this case, we define
\[
\tau(\chi_{\f_\p}^{-1})
=\frac{\chi(\mathfrak{C}^{-1})\cdot
     \lambda_{\infty}^{-1}(\delta_0)\cdot 
\tau_{ \delta_{0}^{-1} }
(\chi_\p^{-1})}{N(\f_\p)}.
\]
Note that this definition is independent of the choice of $\delta_0$ and $\mathfrak{C}$.
Furthermore, observe that
\[
\tau_(\chi_{\f_\p}^{-1})=\prod_{\mathfrak{P}|\f_\p}\tau(\chi_{\mathfrak{P}}^{-1})\prod_{\mathfrak{P}|\mathfrak{g}_{\p}}\chi^{-1}(\mathfrak{D}_{F,\mathfrak{P}}),
\]
where $\tau_\p(\chi_{\mathfrak{P}}^{-1})$ is the local Gauss sum defined in, for example, 
\cite[XIV, \S 4, Theorem 6]{lang1994algebraic} or \cite[XV, \S 2.5]{cassels1967algebraic}, and  $\mathfrak{D}_{F,\mathfrak{P}}$ is the power of prime $\mathfrak{P}$ of $F$
in the prime decomposition of $\mathfrak{D}_{F}$.

\begin{thm}\label{interp-cirt-val-padic-meas}
There exists a unique $\widehat{\mathcal{O}}$-valued measure
 $\mu_{\mathfrak{a},\f_0,\c,\p}$ on $\mathcal{O}_{F}\otimes \Z_p$  such that
 \begin{align*}\label{Main-ident-p-adic-meas-L-special}
 \frac{  1  }
   {   \Omega_{\p}^{nk} \Omega_{\p}^{*n(l-1)}    }
\int_{  (\mathcal{O}_{F} \otimes \Z_p)^{\times} } 
&\chi_p\cdot
N_{l,k}
\, 
d\mu_{\mathfrak{a},\f_0,\c,\p}=\\
&
[U_{\f_0}:V_\f]
\cdot 
\tau(\chi_{\f_\p}^{-1})
\cdot 
 \Gamma(l)^n
\cdot
\frac{(2\pi i)^{nk}}
{\Omega_{\infty}^{nk}\Omega_{\infty}^{*nl}}
L_{\f_0,\c}^*(\aa,\chi,0).\notag
  \end{align*}

\end{thm}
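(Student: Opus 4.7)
The plan is to construct $\mu_{\mathfrak{a},\f_0,\c,\p}$ from the measure $\nu:=\mu_{1,0,\aa^{-1}\f_0,\c,\p}$ of \Cref{p-adic-inter-Lfcb} (applied to $\mathfrak{b}=\aa^{-1}\f_0$, $r=1$, $r^*=0$) and then identify the integral of $\chi_p\cdot N_{l,k}$ over the units with the claimed $L^*$-value via two ingredients: a Gauss-sum expansion of $\chi_{\f_\p}^{-1}$ into additive characters, and an inclusion–exclusion encoding of the restriction to $(\mathcal{O}_F\otimes\Z_p)^\times$. I would set $\mu_{\mathfrak{a},\f_0,\c,\p}$ to be $\nu$ twisted by a simple algebraic constant (involving $\chi(\aa)$ and the $\mathfrak{D}_F$ contribution hidden in the definition of $\tau(\chi_{\f_\p}^{-1})$), so that by \Cref{p-adic-inter-Lfcb} the integral of $N_{l,k}$ over any coset $\mathcal{O}_{h^*,h,\mathfrak{h}_p}$ recovers, up to the prescribed period factors, the value $\mathcal{L}_{\f_0,\c}(\lambda,\aa^{-1}\f_0\mathfrak{h}_p,h,h^*,0)$ weighted by $e(-h^*)/N_{F/\Q}(\mathfrak{h}_\p)$. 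These translate-wise values are precisely the building blocks the expansion below will assemble.

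Next I would expand the integrand. Writing $\chi_p(x,y)=\chi_{\f_\p}^{-1}(x)\,\chi_{\f_{\overline{\p}}}(y)$, I would use the classical identity
\[
\chi_{\f_\p}^{-1}(x)\cdot \tau(\chi_{\f_\p}^{-1})^{-1}\;=\;\sum_{\theta\in(\mathcal{O}_F/\f_\p)^{\times}}\chi_{\f_\p}(\theta)\,e^{\,2\pi i\,\mathrm{Tr}_{F/\Q}(\theta x/\delta_0)},
\]
whose normalization is exactly the one fixed by the paper's definition of $\tau(\chi_{\f_\p}^{-1})$, together with the analogous decomposition of $\chi_{\f_{\overline{\p}}}(y)$ into additive characters on cosets of $\f_{\overline{\p}}$. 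This rewrites $\chi_p\cdot N_{l,k}$ as a finite linear combination of $N_{l,k}$ twisted by additive characters of the shape $e((h^*,h)\,|\,(x,y))$. By the very definition of the pairing $e(\cdot|\cdot)$ underlying the distributions in \Cref{sec:eisco}, each such twist corresponds to integrating $N_{l,k}$ with respect to $\nu$ over a translate $\mathcal{O}_{h^*,h,\mathfrak{h}_p}$ whose $\p$-part of $\mathfrak{h}_p$ is determined by the conductor $\f_\p$ and whose $\overline{\p}$-part is determined by the divisor $\mathfrak{g}_{\overline{\p}}$ appearing in the expansion; \Cref{p-adic-inter-Lfcb} then computes each such term as a $\chi$-weighted special value of $\mathcal{L}_{\f_0,\c}$, which by \eqref{relation-partial-partial-non-integral-a} translates to a value of $L_{\f_0}(\aa\mathfrak{g}_\p\mathfrak{g}_{\overline{\p}}^{-1},\chi,0)$.

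Finally, I would handle the restriction to $(\mathcal{O}_F\otimes\Z_p)^\times$ by inclusion–exclusion:
\[
\mathbf{1}_{(\mathcal{O}_F\otimes\Z_p)^{\times}}\;=\;\prod_{\mathfrak{P}\mid p}\bigl(\,1-\mathbf{1}_{\mathfrak{P}\text{ divides }\,\cdot\,}\,\bigr).
\]
Expanding this product produces exactly the Möbius-indexed sum over pairs $(\mathfrak{g}_\p,\mathfrak{g}_{\overline{\p}})$ dividing $(\mathfrak{e}_\p,\mathfrak{e}_{\overline{\p}})$ that defines $L^*_{\f_0,\c}$, with sign $\mu(\mathfrak{g}_\p\mathfrak{g}_{\overline{\p}})$ and the Euler factors $1/(N(\mathfrak{g}_\p)^{1-s}N(\mathfrak{g}_{\overline{\p}})^s)$ evaluated at $s=0$. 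Combining this with the previous step and the relation $L_{\f_0,\c}^{*}=L_{\f_0}^{*}(\aa\c,\chi,\cdot)-\chi(\c)N_{F/\Q}(\c)^{1-\cdot}L_{\f_0}^{*}(\aa,\chi,\cdot)$ (which is built into the smoothing factor of $\nu$) yields the stated formula, with the factor $\tau(\chi_{\f_\p}^{-1})$ coming from the Gauss-sum expansion. The main obstacle I expect is bookkeeping: ensuring that the Gauss sum $\tau(\chi_{\f_\p}^{-1})$ appears with exactly the normalization prescribed by the paper (including contributions from $\mathfrak{D}_F$ and $N(\f_\p)$), that the additive-character shifts produce cosets whose centers $(h^*,h)$ have exactly the orders modulo $\f$ required to apply \Cref{p-adic-inter-Lfcb}, and that the Möbius sum from the unit restriction matches term by term the Möbius sum defining $L^*_{\f_0,\c}$, including the correct $\chi$-twists on $\aa\mathfrak{g}_\p\mathfrak{g}_{\overline{\p}}^{-1}$ supplied by \eqref{chi-on-prinp-ideal}.
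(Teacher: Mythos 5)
Your plan is correct in outline and follows essentially the same route as the paper: the measure is a $\chi(\aa\c\mathfrak{C}^{-1})$-twist of the measure from \Cref{p-adic-inter-Lfcb} applied to $\mathfrak{b}=\aa^{-1}\f_0\mathfrak{C}$ (not quite $\aa^{-1}\f_0$; the auxiliary ideal $\mathfrak{C}$ with $\f_\p=(\delta_0)\mathfrak{C}$ is needed for the Gauss-sum normalization), $r\equiv 1\ \mathrm{mod}^*\,\f_0$, $r^*=0$; the integral over the units is broken into coset integrals evaluated by \Cref{p-adic-inter-Lfcb}; the $\chi_{\f_\p}^{-1}$-weighted sum over the dual residues $h^*$ is resummed by the Gauss-sum identity (the paper's \Cref{expression-L-f-zero-cond}, via the imprimitive Gauss sums of Nemchenok), and inclusion--exclusion over the primes above $p$ not dividing $\f$ assembles $L^*_{\f_0,\c}$. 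The one place your plan deviates is the proposed ``analogous'' Fourier expansion of $\chi_{\f_{\overline{\p}}}$ into additive characters: the two characters are \emph{not} treated symmetrically in the paper, because $\chi_{\f_{\overline{\p}}}$ acts on the variable $h$ that directly indexes the summation lattice of the partial $L$-function and is already locally constant on the cosets $\mathcal{O}_{h^*,h,\mathfrak{h}_p}$ with $\f_{\overline{\p}}\mid\mathfrak{h}_{\overline{\p}}$ (so it simply factors out), whereas only $\chi_{\f_\p}^{-1}$, living on the dual variable $h^*$ that enters the $L$-series through the additive characters $e(\,\cdot\,|h^*)$, requires the Gauss-sum manipulation --- expanding $\chi_{\f_{\overline{\p}}}$ as well would introduce a spurious Gauss sum $\tau(\chi_{\f_{\overline{\p}}})$ absent from the final formula that you would then have to undo.
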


\begin{proof} 
Let $\mathfrak{b}=\aa^{-1}\f_0 \mathfrak{C}$ and fix $r_0\in \mathfrak{b}\f_0^{-1}$ such that $r_0\equiv 1$ $\mathrm{mod}^*\,\f_0$ and   $r_0^*\in (\c^{-1}\b)^*$. We claim that the measure is
\[
\mu_{\mathfrak{a},\f_0,\c,\p}=
\chi(\aa \c \mathfrak{C}^{-1}) \cdot  \mu_{r_0,r_0^*,\mathfrak{b},\c,\p}.
\]

Indeed, let $\mathfrak{g}_p=\mathfrak{g}_\p^{-1}\mathfrak{g}_{\overline{\p}}$ and define $ \mathcal{O}_{\mathfrak{g}_{p}}$
 as in \Cref{Interpolation of the values of the special}, for fixed ideals 
$\mathfrak{g}_{\p}$ and $\mathfrak{g}_{\overline{\p}}$ as in the beginning of this section. We will prove
 \begin{align}\label{sub-Main-ident-p-adic-meas-L-special}
& \frac{  1  }
   {   \Omega_{\p}^{nk} \Omega_{\p}^{*n(l-1)}    }
\int_{  \mathcal{O}_{\mathfrak{g}_{p}}  } 
\chi_p\cdot 
N_{l,k}
\, 
d\mu_{\mathfrak{a},\f_0,\c,\p}=\\
&\hspace{30pt}
[U_{\f_0}:V_\f]
\cdot 
\tau(\chi_{\f_\p}^{-1})
\cdot 
 \Gamma(l)^n
\cdot
\frac{(2\pi i)^{nk}}
{\Omega_{\infty}^{nk}\Omega_{\infty}^{*nl}}
\cdot
\frac{\chi(  \mathfrak{g}_{p}  )}{  N( \mathfrak{g}_\p)}\cdot
L_{\f_0,\c}(\aa   \mathfrak{g}_{p}^{-1},\chi,0).\notag
  \end{align}
By an inclusion-exclusion principle, the result will follow. 
Furthermore, letting $\f_p=\f_{\p}^{-1}\f_{\overline{\p}}$,  the left-hand side of  \eqref{sub-Main-ident-p-adic-meas-L-special} can be expressed as
 \begin{align}\label{expansion-p-adic-integral-1}
 \sum_{r,r^*}\,
 \chi_{\f_\p}^{-1}(\overline{r^*})\cdot
\chi_{\f_{\overline{\p}}}(r)\cdot
 \frac{  1  }
   {   \Omega_{\p}^{nk} \Omega_{\p}^{*n(l-1)}    }
\int_{ \mathcal{O}_{\mathfrak{g}_p}\, \cap\, \mathcal{O}_{r,r^*,\f_p} } 
N_{l,k}
\, &
d\mu_{\mathfrak{a},\f_0,\c,\p},
  \end{align}
  where   $\mathcal{O}_{r,r^*,\f_p}$ is as in \Cref{Interpolation of the values of the special} and the sum is running over all
$
(r^*,r)\in(\mathcal{O}_{F^*}\times \mathcal{O}_{F})\otimes_{\mathcal{O}}\mathcal{O}_{K_{\overline{\p}}}\,\big/\, \mathcal{O}_{\mathfrak{f}_p}.$
We  may assume that $  r\in \b\f_0^{-1} / \b\f_{\p}$
 and $r\equiv r_0\,(\equiv 1)\,\mathrm{mod}^*\,\f_0$, and 
 $r^*\in (\c^{-1}\mathfrak{b})^*/(\c^{-1}\mathfrak{b}\f_\p^{-1})^*$.
Let $\mathfrak{h}_{\p}=\mathfrak{g}_{\p}\f_{\p}$, $\mathfrak{h}_{\overline{\p}}=\mathfrak{g}_{\overline{\p}}\f_{\overline{\p}}$ and $\mathfrak{h}_p=\mathfrak{g}_p\f_p$. For this $r$ and $r^*$ let 
\begin{equation}\label{condition-h-gro}
 h\in (\b\mathfrak{h}_{p})\f_0^{-1}\f_{\overline{\p}}^{-1}/(\b\mathfrak{h}_{p})
 \quad \text{and}\quad
 h\equiv 1\,\mathrm{mod}^*\,\f_0,
 \end{equation}
  and 
 \begin{equation}\label{condition-g-gro}
h^*\in (\c^{-1}\b\mathfrak{h}_p)^*\overline{\f}_\p^{-1}/
  (\c^{-1}\b\mathfrak{h}_p)^*,
 \end{equation}
such that
$
(h^*,h) \equiv (r^*,r) \mod 
\mathcal{O}_{\mathfrak{f}_p}.$ This congruence, together with the fact that $(h^*,h)\in \mathcal{O}_{\mathfrak{g}_p}$,
imply that
$
\mathcal{O}_{h,h^*,\mathfrak{h}_p} =\mathcal{O}_{\mathfrak{g}_p}\, \cap\, \mathcal{O}_{r,r^*,\f_p}$. Thus, according to \Cref{p-adic-inter-Lfcb}, we have
\begin{align*}\label{ident-measure-Partial-L}
 &\frac{  1  }
   {   \Omega_{\p}^{nk} \Omega_{\p}^{*n(l-1)}    }
\int_{  \mathcal{O}_{h,h^*,\mathfrak{h}_p}} 
 N_{l,k}
\, 
d\mu_{\aa,\f_0,\c,\p}
=\\
\frac{e(-h | h^*)  }
     {  N(\mathfrak{h}_\p)  }
\cdot
&\chi(\aa \c \mathfrak{C}^{-1})
\cdot
[U_\f:V_\f]
\cdot 
 \Gamma(l)^n
\cdot
\frac{(2\pi i)^{nk}}
{\Omega_{\infty}^{nk}\Omega_{\infty}^{*nl}}\cdot
\mathcal{L}_{\f,\c}(\lambda_{\infty}, \b \mathfrak{h}_p,h,h^*,0  ).
\notag
\end{align*}
Using the fact that
 $\chi_{\f_\p}(\overline{r^*})=\chi_{\f_\p}(\overline{h^*})$ and $\chi_{\f_{\overline{\p}}}(r)=\chi_{\f_{\overline{\p}}}(h)$, we see
 from \eqref{expansion-p-adic-integral-1}, that  \eqref{sub-Main-ident-p-adic-meas-L-special} will be a consequence of the following result.
\begin{lem}\label{expression-L-f-zero-cond}
\[
[U_{\f_0}:U_{\f}]\cdot
\tau(\chi_{\f_\p}^{-1})\cdot
\frac{\chi(\mathfrak{g}_{p})}
{   N(\mathfrak{g}_{\p})^{1-s}   N(\mathfrak{g}_{\overline{\p}})^s    }
\cdot
L_{\f_0,\c}(\mathfrak{a}\mathfrak{g}_{p}^{-1},\chi,s)
\]
is equal to
\[
T\cdot
\sum_{  h,h^* }\,
\chi_{\f_\p}^{-1}(\overline{h^*}) \cdot
\chi_{\f_{\overline{\p}}}(h)\cdot
e(-h |h^*) \cdot
\mathcal{L}_{\f,\c}(\lambda_{\infty},\b \mathfrak{h}_p,h,h^*,s  ),
\]
where $(h^*,h)$ is as in  \eqref{condition-h-gro} and \eqref{condition-g-gro}, and
$T:=
\chi (\aa \c \mathfrak{C}^{-1})N(\mathfrak{h}_\p)^{-1}N(\aa \c\mathfrak{C}^{-1}\f_\p)^{-s}.$

\end{lem}

\begin{proof}
It will be enough to show, for the  ideal $\mathfrak{a}$, that
\begin{equation}\label{L-partial-a-bp-cpbar}
[U_{\f_0}:U_{\f}]\cdot
\tau(\chi_{\f_\p}^{-1})\cdot
\frac{\chi(\mathfrak{g}_{p})}
{   N(\mathfrak{g}_{\p})^{1-s}   N(\mathfrak{g}_{\overline{\p}})^s    }
\cdot
L_{\f_0}(\mathfrak{a}\mathfrak{g}_{p}^{-1},\chi,s)
\end{equation}
 is equal to
\begin{equation}\label{other-side-L-partial-l}
T_{0} \cdot
\sum_{ (h^*,h)}\,
 \chi_{\f_\p}^{-1}(\overline{h^*})\cdot
\chi_{\f_{\overline{\p}}}(h)\cdot
e^{2\pi i \mathrm{Tr}_{F/\Q}(-h\,\overline{h^*}\,)}\cdot
\mathcal{L}_{\f}(\lambda_{\infty}, \b \mathfrak{h}_p,h,h^*,s  ),
\end{equation}
where $
T_{0}=
\chi (\aa \mathfrak{C}^{-1})N(\mathfrak{h}_\p )^{-1}N(\aa\mathfrak{C}^{-1}\f_\p)^{-s}
$, $h$ is as in \eqref{condition-h-gro} and
$h^*\in (\b\mathfrak{h}_p)^*\overline{\f}_\p^{-1}/(\b\mathfrak{h}_p)^*$. 
Furthermore, since $\f_\p$ is prime to $\f_0\f_{\overline{\p}}$, we may assume that $h\in (\b \mathfrak{g}_p \f_0^{-1} )/(\b \mathfrak{g}_p \f_{\overline{\p}})$.

Indeed, if this \eqref{L-partial-a-bp-cpbar} is equal to \eqref{other-side-L-partial-l} then, applying it now to  the ideal  $\c\aa$ and then using  the very definition of $L_{\f_0,\c}(\aa\mathfrak{g}_{p}^{-1},\chi,s)$, the result will follow.

According to the very definition of the $L$-function $\mathcal{L}_{\f}(\lambda_{\infty}, \b \mathfrak{h}_p,h,h^*,s  )$, and noting that
 $\chi_{\f_{\overline{\p}}}(y)=\chi_{\f_{\overline{\p}}}(h)$ 
 for $y\in \b \mathfrak{h}_p+h$, we have that \eqref{other-side-L-partial-l} is equal to 
\begin{equation}\label{inter-iden-L-fun-iden}
T_{0}\cdot
\sum_{h}
\sum_{y\in  (\b \mathfrak{h}_p+h)/U_{\f}}
\frac{
       \chi_{\f_{\overline{\p}}} (y) \lambda_{\infty}(y)
          }
           {N((y))^s}
           \sum_{h^*}
           \chi^{-1}_{\f_\p}(\overline{h^*})
           \cdot 
           e\big((y-h)|h^*\big).
\end{equation}

We will show that
\begin{equation}\label{gauss-sum-ident}
 \sum_{h^*}
           \chi^{-1}_{\f_\p}(\overline{h^*})
           \cdot 
            e\big((y-h)|h^*\big)
 =
 \chi_{\f_\p}(\delta_0y)\cdot \tau_{\delta_0^{-1}}(\chi_{\f_\p}^{-1}).         
\end{equation}
Indeed, let $\xi\in F$ such that $(\xi)=\aa^{-1}\mathfrak{g}_\p^{-1}\mathfrak{G}^{-1}$ for an intgeral ideal $\mathfrak{G}$
of $F$ prime to $\f_{\p}$. We will apply \cite[Proposition 4 (5) and (6)]{nemchenok1993imprimitive} to the character $\chi_0:=\chi_{\f_\p}^{-1}$, 
\[
\text{
$\mathfrak{m}_0:=\f_\p$,\quad 
$n_0:=\delta_0^{-1}$, \quad
$\mathfrak{n}_0:=\mathfrak{D}_F\mathfrak{C}$,\quad 
$\mathfrak{a}_0:= \b^{-1}\aa^{-1}\mathfrak{G}^{-1}$ }
\]
and $t_0:=\delta_0(y-h)\xi^{-1}\in \f_0\mathfrak{g}_{\overline{\p}}\f_{\overline{\p}}\mathfrak{G}  \subset \mathcal{O}_F\subset \mathfrak{n}_0^{-1}$ . Observe that $(n_0)=\mathfrak{m}_0^{-1}\mathfrak{D}_F^{-1}\mathfrak{n}_0$, 
$\mathfrak{n}_0$ is an integral ideal prime to $\mathfrak{m}_0$,  $\mathfrak{a}_0$ is a fractional ideal relatively  prime to $\mathfrak{m}_0=\f_\p$ and
 that $\xi \overline{h^*}$ runs through a full set of representatives of $\mathfrak{a}_0/\mathfrak{a}_0\mathfrak{m}_0$ 
 as $h^*$ runs through $(\b\mathfrak{h}_p)^*\overline{\f}_\p^{-1}/(\b\mathfrak{h}_p)^*$. Thus, with the interpretation of $\chi_{\f_p}$ and $\chi_{\f_\p}^{-1}$ defined on all of $x\in F$ such that $x\in \mathcal{O}_{F_{\mathfrak{P}}}$ for all $\mathfrak{P}|\f_\p$, we have
\begin{align*}
 \sum_{h^*}
           \chi^{-1}_{\f_\p}(\overline{h^*})
           \cdot 
           e\big((y-h)|h^*\big)
           &=  \chi_{\f_\p}(\xi)\sum_{h^*}
           \chi^{-1}_{\f_\p}(\xi \overline{h^*})
           \cdot 
           e^{2\pi i \mathrm{Tr}_{F/\Q}\big(n_0t_0\cdot ( \xi \overline{h^*})\big)}\\
           &=
           \chi_{\f_\p}(\xi)\cdot \tau_{n_0t_0}(\chi_{\f_\p}^{-1}) \\ 
            &=
           \chi_{\f_\p}(\xi)\cdot \chi_{\f_\p}(t_0) \cdot\tau_{n_0}(\chi_{\f_\p}^{-1})  \\
           &=\chi_{\f_\p}(\delta_0(y-h)) \cdot\tau_{n_0}(\chi_{\f_\p}^{-1}).     
\end{align*}
Noting that $\delta_0 y\in \mathcal{O}_{F_{\mathfrak{P}}}$  and $\delta_0 h\in \f_\p\mathcal{O}_{F_{\mathfrak{P}}}$ for all $\mathfrak{P}|\f_\p$, then 
$\chi_{\f_\p}(\delta_0(y-h)) =\chi_{\f_\p}(\delta_0y) $ and thus \eqref{gauss-sum-ident} follows.

Bearing in mind that $\chi_{\f_{\overline{\p}}}(\delta_0)=1$, so that 
$\chi_{\f_{\overline{\p}}}(y)=\chi_{\f_{\overline{\p}}}(\delta_0y)$, and observing that
$N(\f_\p\mathfrak{C}^{-1})^s=N((\delta_0))^s$, then from \eqref{chi-on-prinp-ideal} and \eqref{gauss-sum-ident},
we can rewrite \eqref{inter-iden-L-fun-iden} as 
\begin{equation*}\label{one-more-eq}
\frac{\tau(\chi_{\f_\p}^{-1})}{N(\b_\p )}\cdot
\frac{\chi(\mathfrak{a})}{N(\mathfrak{a})^s}\cdot
\sum_{h}
\sum_{y\in  (\b \mathfrak{h}_p+h)/U_{\f}}
\frac{
      \chi_{\f_\p}(\delta_0y ) \chi_{\f_{\overline{\p}}}(\delta_0y )
      \lambda_{\infty}(\delta_0y)
          }
           {N((\delta_0y))^s},
\end{equation*}

Therefore, noticing that $\delta_0\b \mathfrak{h}_p
=\aa^{-1} \mathfrak{g}_p\f_0\f_{\overline{\p}}$ and $\delta_0(\b \mathfrak{h}_p+h)\in \aa^{-1} \mathfrak{g}_p $ and bearing in mind \eqref{chi-on-prinp-ideal}, the above is equal to
\begin{align*}
\frac{\tau(\chi^{-1}_{\f_\p})}{N(\mathfrak{g}_\p )}\cdot
&\frac{\chi(\mathfrak{a})}{N(\mathfrak{a})^s}\cdot
\sum_{h}\mathcal{L}_{\f} (\chi ,\aa^{-1} \mathfrak{g}_p\f_0\f_{\overline{\p}} ,\delta_0h,0,s  ).
\end{align*}
Observe here that $\delta_0 h$ runs through a full set of representatives of the group
$\aa^{-1} \mathfrak{g}_p\f_\p/\aa^{-1} \mathfrak{g}_p\f$ congruent to $1\,\mathrm{mod}\,^*\f_0$. 
Fix a 
$\omega\in \aa^{-1}\mathfrak{g}_p$ 
such that 
$\omega\equiv 1\, \mathrm{mod}^*\,\f_0$, 
then $\iota=\delta_0h-\omega$ runs through a full set of representatives of the group
$ \aa^{-1} \mathfrak{g}_p\f_0
/\aa^{-1} \mathfrak{g}_p\f_0\f_{\overline{\p}}$.

Finally, it remains to observe that
\[
\sum_{\iota}\mathcal{L}_{\f} (\chi ,\aa^{-1} \mathfrak{g}_{p}\f_0\f_{\overline{\p}} ,\iota+\omega,0,s  )
=
\mathcal{L}_{\f} (\chi ,\aa^{-1} \mathfrak{g}_p\f_0 ,\omega,0,s  )
\]
and that
\begin{align*}
\frac{\tau(\chi_{\f_\p}^{-1})}{N(\mathfrak{g}_\p)}\cdot
\frac{\chi(\aa)}{N(\aa)^s}\cdot
&\mathcal{L}_{\f} (\chi,\aa^{-1} \mathfrak{g}_p\f_0 ,\omega,0,s  )\\
&=
\frac{\tau(\chi_{\f_\p}^{-1})}{N(\mathfrak{g}_\p)}\cdot
[U_{\f_0}:U_{\f}]\cdot
\frac{\chi(\aa)}{N(\aa)^s}\cdot
\mathcal{L}_{\f_0} (\chi,\aa^{-1} \mathfrak{g}_p\f_0 ,\omega,0,s  ),
\end{align*}
which coincides with  \eqref{L-partial-a-bp-cpbar}, by \eqref{relation-partial-partial-non-integral-a}.
\end{proof}

\end{proof}

\subsection{Acknowledgments} 

The author would like to thank C. Karabulut and T. Wong for their valuable discussions that helped shape this article. In particular, the author would like to thank T. Wong for his careful review of this paper and for providing helpful comments and suggestions for improvement.

\bibliographystyle{alpha}
\bibliography{Eiscocycle}
\end{document}